\newtheorem{theorem}{Theorem}
\newtheorem{assumption}{Assumptions}
\newtheorem{remark}{Remark}
\newtheorem{lemma}[theorem]{Lemma}
\newtheorem{proposition}[theorem]{Proposition}
\begin{document}
    \thispagestyle{empty}
	\title{A control strategy for Sterile Insect Techniques using exponentially decreasing releases to avoid the hair-trigger effect}
    \date{\vspace{-10ex}}
	\maketitle
	\begin{center}
	    \author{Alexis Leculier\footnotemark[1], \quad Nga Nguyen\footnotemark[2]}
	\end{center}
     % Author(s)
     \begin{abstract}
         In this paper, we introduce a control strategy for applying the Sterile Insect Technique (SIT) to eliminate the population of {\it Aedes} mosquitoes which are the vectors of various deadly diseases like dengue, zika, chikungunya... in a wide area. We use a system of reaction-diffusion equations to model the mosquito population and study the effect of releasing sterile males. Due to the so-called \textit{hair-trigger effects}, the introduction of only a few individuals can lead to the invasion of mosquitoes in the whole region after some time. To avoid this phenomenon, our strategy is to keep releasing a small number of sterile males in the treated zone and move this release forward to push back the invasive front of wild mosquitoes. By proving a comparison principle for the system and using the traveling wave analysis, we show in the present paper that the strategy succeeds with a finite amount of sterile male mosquitoes. We also provide some numerical illustrations for our results.    
     \end{abstract}
    \footnotetext[1]{Université de Bordeaux, Département Universitaire des Sciences d'Agen, Avenue Michel Serres, 47000 Agen; MAMBA, Inria Paris, Laboratoire Jacques Louis-Lions, Sorbonne Université, 5 place Jussieu, 75005 Paris, France (email \hspace{0.1cm} : \hspace{0.1cm} \texttt{alexis.leculier@u-bordeaux.fr}).} 
    \footnotetext[2]{LAGA, CNRS UMR 7539, Institut Galilée, Université Sorbonne Paris Nord, 99 avenue Jean-Baptiste Clément, 93430 Villetaneuse; MAMBA, Inria Paris, Laboratoire Jacques Louis-Lions, Sorbonne Université, 5 place Jussieu, 75005 Paris, France (email \hspace{0.1cm} : \hspace{0.1cm} \texttt{thiquynhnga.nguyen@math.univ-paris13.fr}).} 
    \section{Introduction}
\subsection{The biological motivation}
    Sterile Insect Technique (SIT) is the biological method where people release sterile individuals of pest species to introduce sterility into the wild population, and thus control them (see \cite{DYC} for an overall presentation of SIT). It is a promising control method against many harmful insects, including mosquitoes of genus \textit{Aedes}. Two species \textit{Aedes aegypti} and \textit{Aedes albopictus} are vectors of many dangerous diseases such as dengue, zika, chikungunya..., and, until now, there is neither effective treatment nor vaccine for these diseases. Therefore, the SIT is now used widely to prevent the rapid invasion of these vectors. This technique has been applied successfully for {\it Aedes} mosquitoes in the field in many different countries, for instance, in Italy \cite{CAP}, Cuba \cite{GAT}, and China \cite{ZHE}. In our work, we focus on applying SIT in a vast region using the idea of the ``rolling carpet'': sanitary authorities release a large number of sterile insects near the front of the invasion, once this area is free of wild insects, they move the front of release and continue to release a few sterile individuals in the already treated area (see \cite{DYC}). The purpose of these few releases is to prevent reinvasion by some natural \textit{hair-trigger effects} when the existence of just a few individuals leads to the total invasion of the territory. The notion of `hair-trigger' was first introduced by the author in \cite{ARO78} to refer to the instability of the zero equilibrium with respect to any non-trivial perturbation. In our case, it has been observed in \cite{DYC}  that without this small amount of releases of sterile males, the mosquitoes reinvade the treated territory. Therefore, to avoid such a natural phenomenon, one needs to keep releasing sterile mosquitoes even in the already treated area. Since the eventual number of mosquitoes in this area is small, we infer that the minimal number of released sterile males is small. By implementing such a process, the sanitary authorities win areas without any wild insects, prevent reinvasion and keep the number of released sterile insects (produced artificially) below a threshold. It is in the interest of the sanitary authorities to consume just as fewer sterile males as possible since it is one of the main costs of the strategy. We propose in this article to study a mathematical model of such releasing strategy used in the field achievement of the SIT.
    %%%%%%
    \subsection{Our model and the main result} 
    Following ideas in e.g. \cite{ALM3}, \cite{STR19}, we model the mosquito population by a partially degenerate reaction-diffusion system for time $t > 0$, position $x \in \mathbb{R}$: 
	\begin{equation}
		\begin{cases}
			\partial_t E  =  \beta F \Big(1-\frac{E}{K}\Big) - (\nu_E + \mu_E) E,  \\
			\partial_t F - D\partial_{xx}F =  r \nu_E E \frac{M}{M+\gamma M_s} - \mu_F F, \\
			\partial_t M - D\partial_{xx}M =  (1-r)\nu_E E - \mu_M M,  \\
			\partial_t M_s - D\partial_{xx}M_s =  \Lambda(t,x) - \mu_s M_s, \\
			(E,F,M,M_s)(t = 0, x) = (E^0, F^0, M^0, M_s^0)(x).
		\end{cases}
		\label{eqn:main}
	\end{equation}
	In this system, we have: 
	\begin{itemize}[leftmargin = 0.7cm]
		\item $E$, $M$, $M_s$ and $F$ denote respectively the number of mosquitoes in the aquatic phase, adults males, sterile adults males, and adults females depending on time $t$ and position $x$;
		\item $\Lambda(t,x)$ is the number of sterile mosquitoes that are released at position $x$ and time $t$;
		\item the fraction $\frac{M}{M+ \gamma M_s}$ corresponds to the probability that a female mates with a fertile male, and parameter $\gamma$ models the competitivity of sterile males;
		\item $\beta>0$ is a birth rate; $\mu_E>0$, $\mu_M>0$, and $\mu_F>0$ denote the death rates for the mosquitoes in the aquatic phase, for adults males, for adults females, respectively;
		\item $K$ is an environmental capacity for the aquatic phase, accounting also for the intraspecific competition;
		\item $\nu_E>0$ is the rate of emergence;
		\item  $D > 0$ is the diffusion rate; 
		\item $r\in (0,1)$ is the probability that a female emerges, then $(1-r)$ is the probability that a male emerges;
	%	\item $x \in \mathbb{R}$ is the position and $t \geq 0$ is the time.
		\item the initial data $ (E^0, F^0, M^0, M_s^0)\geq (0,0,0,0)$ (components by components). 
	\end{itemize}
	On the subset $\{E\leq K\} \cap \{ M_s  = 0 \}$ of the positive cone $\{E \geq 0, F \geq 0, M \geq 0, M_s \geq 0\}$, this system is cooperative. However, the introduction of sterile males $M_s > 0$ makes the whole system \eqref{eqn:main} lose this property. We will show in this paper a comparison principle for system \eqref{eqn:main} to deal with this difficulty and use it to prove our main results. 
	
	The main result of this article ensures the theoretical validity of the releasing strategy used by the sanitary authorities when they use the SIT with a species that is subject to the \textit{hair-trigger effect}.
	
	We introduce the basic offspring number as follows 
    \begin{equation}
    	\mathcal{R} = \frac{\beta r \nu_E}{\mu_F(\nu_E + \mu_E)},
    \end{equation}
    then our main result reads
        \begin{theorem}\label{thm:main}
		   If the basic offspring number $\mathcal{R} > 1$, $(E^0, F^0, M^0) \leq (E^*, F^*, M^*)$ and $(E^0,  F^0, M^0)_{|\mathbb{R}_+}  = (0, 0, 0)$ then for any speed $c\leq 0$, there exist $A,\  \eta>0$ such that for 
		    \begin{equation}
		       \label{eqn:phi2}
		       \Lambda(t,x) = \left\lbrace \begin{aligned}& 0 && \text{ for } x-ct \leq 0, \\ &A e^{-\eta (x - ct)} &&\text{ for } x-ct > 0, \end{aligned} \right.
		    \end{equation}
		    we have 
		    \[\underset{ t \to +\infty}{\lim} \ \underset{ x >  ct}{\sup} \max(E,F, M)(t,x) = 0. \]
		\end{theorem}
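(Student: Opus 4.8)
The plan is to work in the moving frame $z=x-ct$ and, using the comparison principle established earlier, to construct a super-solution of \eqref{eqn:main} that dominates $(E,F,M)$ and vanishes ahead of the front. The decisive structural fact is that the $M_s$-equation is decoupled, so I would treat it first. Since $\Lambda\ge0$ and the equation is linear, a direct computation shows that $\underline{M_s}=B\,e^{-\eta(x-ct)}$ is a sub-solution on $\{x>ct\}$ as soon as $B(\mu_s-D\eta^2+c\eta)\le A$; this is solvable with $B$ proportional to $A$ whenever $\eta$ is small enough that $\mu_s>D\eta^2-c\eta$, which holds for every $c\le0$. The scalar comparison principle then gives $M_s\ge B\,e^{-\eta(x-ct)}$ on the release region.

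The key point, and the reason the release is taken with the same exponential rate $\eta$, is the matched-rate estimate. If I seek a super-solution of $(E,F,M)$ whose $M$-component obeys $\bar M\le v_M\,e^{-\eta(x-ct)}$, then on $\{x>ct\}$ the exponentials cancel,
\[
\frac{M}{M+\gamma M_s}\le\frac{\bar M}{\bar M+\gamma\underline{M_s}}\le\frac{v_M}{v_M+\gamma B}=:\theta ,
\]
so $\theta$ is a constant. Because $B$ is proportional to $A$, taking $A$ large makes $\theta$ arbitrarily small; I fix $A$ so that $\mathcal R\,\theta<1$, which is possible precisely because $\mathcal R$ is finite. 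This replaces the genuinely nonlinear, non-cooperative mating term by a constant sub-unit multiplier, turning the unstable hair-trigger dynamics into a subcritical one.

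With $\theta$ fixed, I bound the mating probability by $\theta$ and the logistic factor by $1$, reducing the task to finding an exponential super-solution $V\,e^{-\eta(x-ct)}$ — patched below the steady state $(E^*,F^*,M^*)$, using $(E^0,F^0,M^0)\le(E^*,F^*,M^*)$ and its vanishing on $\mathbb R_+$, so that the initial datum is dominated — for the cooperative linear system with reaction matrix
\[
J_\theta=\begin{pmatrix}-(\nu_E+\mu_E)&\beta&0\\ r\nu_E\theta&-\mu_F&0\\ (1-r)\nu_E&0&-\mu_M\end{pmatrix}.
\]
The algebraic conditions on $(V,\eta)$ degenerate, as $\eta\to0$, to the requirement that $J_\theta$ be Hurwitz, i.e. $(\nu_E+\mu_E)\mu_F-\beta r\nu_E\theta>0$, which is exactly $\mathcal R\theta<1$. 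Since these inequalities hold strictly at $\eta=0$, they persist for small $\eta>0$ by continuity, with $c\le0$ posing no obstruction; one may even insert a factor $e^{-\delta t}$ with $\delta>0$ small, the diagonal shift $-\delta$ being a harmless perturbation, to obtain genuine temporal decay. Invoking the comparison principle for the full system then yields $(E,F,M)\le\bar U$, and letting $z\to+\infty$ and $t\to+\infty$ gives the claimed limit.

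I expect the main obstacle to be twofold. First, the presence of $M_s$ destroys cooperativity, so the ordinary parabolic comparison principle does not apply; this is handled by the comparison principle proved earlier, but one must feed it the correct monotonicities — upper bounds on $E,F,M$ together with a \emph{lower} bound on $M_s$ — so that the ratio estimate uses the independently derived lower bound on $M_s$ and the super-solution's own $\bar M$, never the unknown $M$, and the argument stays non-circular. Second, and more delicate, is upgrading spatial confinement to uniform decay to zero on the whole moving half-line $\{x>ct\}$: a stationary exponential super-solution only yields a uniform bound, so the temporal decay and the matching of the super-solution across the moving front $x=ct$ — where the solution transitions from values near $(E^*,F^*,M^*)$ to small values — is the crux, and is where the traveling-wave analysis and the strict subcriticality $\mathcal R\theta<1$ must be combined to close the argument.
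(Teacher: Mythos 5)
Up to its final step, your plan is essentially the paper's own proof, and that part is sound: your exponential sub-solution for $M_s$ is Lemma \ref{lem:comparison}; your constant bound $\theta$ on the mating ratio, obtained by matching the decay rate of $\bar M$ to that of the release, plays exactly the role of the paper's constant $\alpha$ in Proposition \ref{prop:super2}; your patched super-solution (equilibrium constants on the left, exponential tails on the right, kinks in the admissible direction) is the paper's construction with simpler pure-exponential tails, and your algebra is correct --- positive amplitudes $V$ exist for small $\eta>0$ precisely when $\mathcal R\,\theta<1$, with $c\le 0$ causing no trouble; and feeding Theorem \ref{thm:comparison} the pair (super-solution against the lower bound $\underline{M_s}$, true solution as sub-solution against the true $M_s$) is exactly how the paper circumvents the loss of cooperativity, and you keep this non-circular.

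The genuine gap is the step you yourself call the crux, and the one mechanism you offer for it provably fails. Inserting the factor $e^{-\delta t}$ is not a ``harmless diagonal perturbation'': since $e^{-\eta(x-ct)}e^{-\delta t}=e^{-\eta(x-c't)}$ with $c'=c-\delta/\eta<c$, it is identical to translating the exponential branch at a strictly faster speed $c'$ (this is precisely the device $c'<c$ in the paper's proof of Theorem \ref{thm:main}). Consequently, for large $t$ the exponential branch of $\bar U=\min\{(E^*,F^*,M^*),\,Ve^{-\eta(x-ct)}e^{-\delta t}\}$ is the active one on a strip $\{\,ct-\tfrac{\delta}{\eta}t+O(1)<x\le ct\,\}$ whose width grows linearly in time, and on this strip $\Lambda\equiv 0$: your $\underline{M_s}=Be^{-\eta(x-ct)}$ was built only on $\{x>ct\}$, so no positive lower bound on $M_s$ is available there and the mating ratio can only be estimated by $1$. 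The super-solution inequalities for $(\bar E,\bar F)$ on that strip then read $(\nu_E+\mu_E+\eta c-\delta)V_1\ge\beta V_2$ and $(\mu_F+\eta c-D\eta^2-\delta)V_2\ge r\nu_E V_1$, which for small $\eta,\delta$ force $\mu_F(\nu_E+\mu_E)\ge\beta r\nu_E$, i.e. $\mathcal R\le 1$, contradicting the hypothesis. This is the hair-trigger effect reasserting itself: no exponentially small super-solution can exist in a region carrying no control, so any super-solution that actually decays on the moving half-line $\{x>ct\}$ must be supported by a lower bound on $M_s$ valid strictly to the \emph{left} of the release front $x=ct$ (e.g.\ coming from sterile males already released and diffusing leftward) --- an ingredient neither your construction nor Lemma \ref{lem:comparison} supplies. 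Your closing sentence (``\dots must be combined to close the argument'') concedes exactly this; as written, the proposal yields the uniform bound $\sup_{x>ct}\max(E,F,M)\le\sup_{z>0}\bar U(z)$ but not the limit $0$, so the decisive step of Theorem \ref{thm:main} is missing. Note that the paper handles this step by the equivalent two-speed device, taking the profile of Proposition \ref{prop:super2} at speed $c'<c$ and asserting it is a super-solution relative to $\psi=\phi(x-ct)$; the verification demanded by that assertion concerns the very same strip $\{c't<x\le ct\}$, so you have located the sensitive point correctly --- but locating it is not filling it.
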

		In other words, we succeed in pushing the natural front of invasion located near $0$ to $-\infty$: we suppress the mosquitoes from the field with a finite amount of mosquitoes $\frac{A}{\eta}$ at each time we release.
	%%%%
	\subsection{State of the art}
    Based on biological knowledge, mathematical modeling and numerical simulations can be additional and useful tools to prevent failures, improve protocols and test assumptions before applying the SIT strategy in the fields. Many works have been done using mean-field temporal models to assess the SIT efficiency for a long-term period (see e.g. \cite{BLI}, \cite{STR19} and references therein).\\ 
    Only a few works exist modeling explicitly the spatial component due to the lack of knowledge about vectors in the fields. Moreover, from the mathematical point of view, the studies of spatial-temporal models are more sophisticated. A reaction-diffusion equation was first used in \cite{MAN} to model the spreading of a pest in the SIT model. Then the model was completed by considering the release of sterile females in \cite{LEW}. In this article, the author assumed that the same amount of sterile insect is released in the whole field (i.e. $\Lambda \equiv constant$). It follows that if the number of released sterile insects is large enough, the reaction term becomes strictly negative, and the extinction of the wild population follows. However, this hypothesis is unrealistic in a large area since sanitary authorities can only produce a finite amount of sterile insects. The main contribution of our work is to tackle this problem by following what has been done in the field experiment: we assume that the released are not homogeneous. By considering only releases supported in $\mathbb{R}_+$ with exponential decay, the amount of sterile males released each time is constant.\\
    In \cite{SEI}, the authors studied SIT control with barrier effect using a system of two reaction-diffusion equations for the wild and the sterile populations. And recently, a sex-structured system including the aquatic phase of mosquitoes has been studied in \cite{ANG}. Using the theory of traveling waves, they proved, for a similar system to \eqref{eqn:main}, the existence of natural invading traveling wave when $\lbrace M_s = 0 \rbrace$ and the system is either monostable or bistable. They also provide some numerical implementation of the SIT. In this numerical part, $\lbrace M_s \geq 0 \rbrace$ and the system is assumed to be bistable. The advantage of considering a bistable system is that one can release the mosquitoes in a compact set since the equilibrium $0$ is stable. Indeed, if the initial wild mosquitoes distribution behaves as $1_{\mathbb{R}_-}$ and we release enough sterile males in some compact set $(ct, L+ct)$ with a speed $c<0$, then the wild population remains close to 0 in the set $\lbrace x > L +ct \rbrace$ thanks to the assumed natural dynamics of the mosquitoes. This result was theoretically proved in \cite{ALM3}. The framework of the present study is a bit more complex since one can not rely on the natural dynamics of the mosquitoes because of the so-called ``hair-trigger effect''. We also quote \cite{ALM1, ALM2}, which was done before \cite{ALM3} where the authors studied the analogous system of reaction-diffusion equations to \eqref{eqn:main} in a bistable context taking into account the strong Allee effects. They proved that for large enough constant releases in a bounded interval, there exists a barrier that blocks the invasion of mosquitoes. However, for the monostable case, they obtain numerically that there is no blocking since the mosquito extinction equilibrium is unstable. \\
    The use of sterile insect techniques in a bistable context in a bounded domain is studied in \cite{Opt-Trelat-Zhu-Zua1, Opt-Trelat-Zhu-Zua2}. We also quote \cite{Bressan, AlmLecNadPriv} that focus on the optimal form to stop or repulse an invading traveling wave by spreading a killing agent (such as insecticide). In \cite{Bressan} the authors study the optimal shape of spreading in order to repulse an invasion. In \cite{AlmLecNadPriv}, the authors study the optimal shape of spreading in order to block an invasion but add more constraints on the spreading area than in \cite{Bressan}. Contrary to the present work, the key argument in \cite{Opt-Trelat-Zhu-Zua1, Opt-Trelat-Zhu-Zua2, Bressan, AlmLecNadPriv} is that the reaction term is bistable. It would not work anymore in a monostable context. We propose here a way to deal with this new difficulty with a finite amount of control agents (such as sterile insects or insecticides or other kinds of control).
    %%%
    \subsection{Outline of the paper}
    The outline of the rest of this paper is the following: section \ref{sec:numeric} is devoted to showing some numerical illustrations to support our theoretical results. In section \ref{sec:strategy}, we first introduce a simplified model that allows having a fine understanding of the mechanics of the proofs. Next, we present the main idea to prove Theorem \ref{thm:main}.
    Next, in section \ref{sec:simplified} we provide the technical details for the results stated for the simplified model. Finally, section \ref{sec:TW:system} is devoted to the technical details that allow proving Theorem \ref{thm:main} and further results as the existence of ``forced" traveling wave solutions for system \eqref{eqn:main}. These results are stated in section \ref{sec:strategy}. We postpone the existence of invasive traveling waves for the system without SIT to Appendix \ref{app:A1}. Indeed, it has already been studied for a similar system in \cite{ANG}. We present this proof for the sake of completion.
    %%%%%%%%%%%%%%%%%%%%%%%%%%%%%%%%%%%%%%%%
    %%%%%%%%%%Section 2 %%%%%%%%%%%%%%%%%%%
    %%%%%%%%%%%%%%%%%%%%%%%%%%%%%%%%%%%%%%%
	\section{Numerical illustrations }
	\label{sec:numeric}
	In this section, we present some numerical illustrations for our theoretical results. Since we study the model in one-dimensional space, we use a semi-implicit second-order finite difference method for space discretization. We use a first-order difference method for temporal discretization, with the time step following a CFL condition. The values of parameters are chosen following \cite{DUF} for mosquitoes of species {\it Aedes albopictus} and presented in Table \ref{tab:parameter}.
	\begin{table}
	\caption{Parameters for the numerical illustration}
	\label{tab:parameter}
	\begin{center}
		\begin{tabular}{| c | c | c | c | c | c | c | c | c | c | c |}
			\hline
			Parameters & $\beta$ & $K$ & $\nu_E$ & $\mu_E$ & $\mu_F$ & $\mu_M$ & $\mu_s$ & $\gamma$ & $r$ & $D$ \\ [0.5ex] 
			\hline
			Values & 10 & 200 & 0.08 & 0.05 & 0.1 & 0.14 & 0.14 & 1 & 0.5 & 0.5 \\ 
			\hline 
		\end{tabular}
	\end{center}
    \end{table}
    
	We show in Figure \ref{fig:wave} the dynamics of the female population over time. Here, the time unit is a day, space unit is 1 km. In this simulation, the initial data are taken as compactly supported functions. When there is no SIT control, the wave of mosquitoes invades the space (see Figure \ref{fig:Ms}). To stop this invasion, we release sterile mosquitoes with a release function that decays exponentially on half of the space $\Lambda(t,x)=\phi(x-ct)=600 e^{-0.2(x-ct)}$. We consider protecting the region on $[0,+\infty)$ from an invasion of {\it Aedes} mosquitoes. First, we keep releasing in this area over time and do not move it ($c = 0$), then we observe in Figure \ref{fig:c0} that the wave is blocked near $x = 0$ and cannot pass through the release zone. Then, by moving this release domain to the left with velocity $c = -0.3$, we succeed to push back the wave to the left (see Figure \ref{fig:c03}). However, with the same number of sterile males released, we observe in Figure \ref{fig:c05} that if we move this domain faster to the left with velocity $c = -0.5$, there is a reinvasion on the right of the zone. It seems that the faster we move the release domain, the faster we push back the mosquito waves, but we need to release more sterile males in the treated zone to prevent reinvasion. The last observation follows the intuition: the faster we want to treat the area, the more mosquitoes we need. It has been already observed in the bistable context (see \cite{ALM3}).
    \begin{figure}
	\centering
	\begin{subfigure}{0.45\textwidth}
		\centering
		\includegraphics[width=\textwidth]{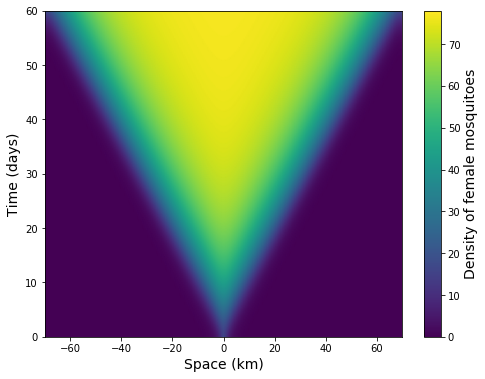}
		\caption{$M_s = 0$}
		\label{fig:Ms}
	\end{subfigure}
	\begin{subfigure}{0.45\textwidth}
		\centering
		\includegraphics[width=\textwidth]{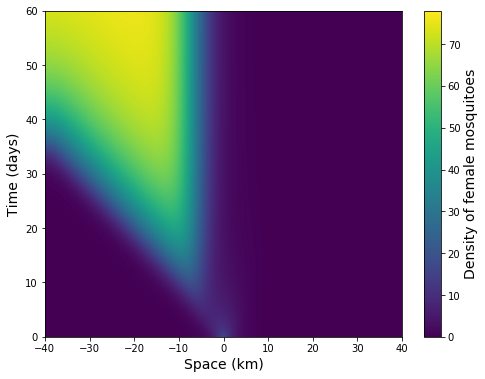}
		\caption{$\Lambda(t,x) = 600 e^{-0.2(x-ct)}, c = 0$}
		\label{fig:c0}
	\end{subfigure}\\
	\vspace{0.2 cm}
	\begin{subfigure}{0.45\textwidth}
		\centering
		\includegraphics[width=\textwidth]{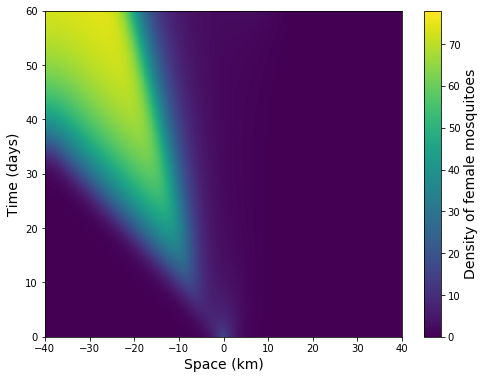}
		\caption{$\Lambda(t,x) = 600 e^{-0.2(x-ct)}, c = -0.3$}
		\label{fig:c03}
	\end{subfigure}
	\begin{subfigure}{0.45\textwidth}
		\centering
		\includegraphics[width=\textwidth]{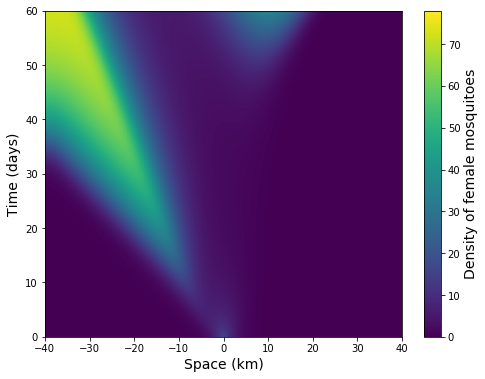}
		\caption{$\Lambda(t,x) = 600 e^{-0.2(x-ct)}, c = -0.5$}
		\label{fig:c05}
	\end{subfigure}
	\caption{Dynamics of the female density in system (\ref{eqn:main}).}
	\label{fig:wave}
    \end{figure}
    %%%%%%%%%%%%%%%%%%%%%%%%%%%%%%%%%%%%%%%%
    %%%%%%%%%%Section 3 %%%%%%%%%%%%%%%%%%%
    %%%%%%%%%%%%%%%%%%%%%%%%%%%%%%%%%%%%%%%
    \section{The general strategy and the mathematical framework}\label{sec:strategy}
	The model of a scalar reaction-diffusion equation was used widely in the literature studying SIT (see e.g. \cite{LEW}, \cite{ZHU}). Thus, before focusing on the main system \eqref{eqn:main}, we first treat a simplified model and present the main idea of the strategy in this scalar model. Then, we go through system \eqref{eqn:main} which leads to new technical difficulties. 
	\subsection{The simplified model}
	We first assume that the dynamics of the aquatic phase are fast (i.e. $\partial_t E = 0$) and the number of females $F$ and males $M$ are comparable as the constants involved in the system \eqref{eqn:main}. We propose the following scalar equations:
		\begin{equation}
		\left\lbrace
		\begin{aligned}
		   &\partial_t u - \partial_{xx} u = \dfrac{u}{u + \Lambda}  \dfrac{\beta u}{\frac{\beta u}{K} + \delta} - \mu u, \qquad \text{ for } x \in \mathbb{R}, t > 0, \\
		   &u(t =0, x) = u_0(x) . 
		   \end{aligned}
		   \right.
		\label{eqn:toymodel}
	\end{equation}
	where $\beta, \delta, \mu, K$ are parameters, $u$ is the density of mosquitoes, and function $\Lambda(t,x)$ is the control (i.e. the number of sterile males released). In order to ensure the existence of a non-trivial steady state, we need the following assumption: 
	\begin{assumption}
	    \label{ass1}
	    The parameters $\beta, \delta, \mu, K$ are positive and $\beta - \mu \delta > 0$.
	\end{assumption}
	We first treat briefly the case without any control (i.e. $\Lambda = 0$) and then we explain how to obtain a similar result to Theorem \ref{thm:main}. 
	\subsubsection{The case $\Lambda \equiv 0$}
	In this case, when Assumption \ref{ass1} holds, the equation has two equilibria $u_0 = 0$ and $u_* = \dfrac{K(\beta - \mu \delta)}{\beta \mu} > 0$. The reaction term $f(u) :=\dfrac{\beta u}{\frac{\beta u}{K} + \delta} - \mu u > 0$ for any $u\in (0,u_*)$, $f'(0) = \dfrac{\beta}{\delta} - \mu > 0$, and $f(u) < \dfrac{\beta u}{\delta} - \mu u = f'(0)u$. Then, from the result in \cite{KPP}, there exists a number $c_* > 0$ such that (\ref{eqn:toymodel}) possesses ``natural'' travelling wave solutions $u(t,x) = v_N(x-ct)$ for all speed $c > c_*$ with $v_N$ solutions of 
	\[ \left\lbrace
	\begin{aligned}
	    &-c v_N' - v_N'' = \frac{\beta v_N}{\frac{\beta v_N}{K} + \delta } - \mu v_N, \\
	    &v_N(-\infty) = u_*, \quad v_N(+\infty) = 0.
	\end{aligned}\right.\]
	Hence, when $t \rightarrow +\infty$, the positive state $u = u_*$ invades the extinction state $u = 0$ (see \cite[Theorem 4.1]{ARO} for more details). We recall the following classical result
	\begin{theorem}\cite[Theorem 4.1]{ARO}
	    For any positive initial data $u_0$, the solution of \eqref{eqn:toymodel} with $\Lambda \equiv 0$ satisfies 
	    \[ \forall c < c_*, \quad \underset{ t \to +\infty}{\lim} \ \underset{|x| < ct}{\sup} |u(x,t) - u_*| = 0, .\]
	\end{theorem}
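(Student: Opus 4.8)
The statement is the classical hair-trigger / spreading result of Aronson--Weinberger for a monostable (KPP) nonlinearity, and I would reconstruct its proof from the comparison principle together with the structural facts already recorded for $f$: namely $f(0)=f(u_*)=0$, $f>0$ on $(0,u_*)$ and $f<0$ on $(u_*,\infty)$ (the latter since $\frac{\beta u}{\beta u/K+\delta}$ stays bounded while $\mu u\to\infty$), together with $f'(0)=\frac{\beta}{\delta}-\mu>0$ and the KPP bound $f(s)\le f'(0)s$. Throughout I would take $u_0$ bounded and set $c_*=2\sqrt{f'(0)}$.

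First, the upper estimate. Since $f<0$ on $(u_*,\infty)$ and $f(u_*)=0$, the spatially homogeneous solution $w(t)$ of $\dot w=f(w)$ with $w(0)=\max(\sup u_0,u_*)$ is nonincreasing with $w(t)\to u_*$. As $w$ is a supersolution of \eqref{eqn:toymodel} with $\Lambda\equiv0$, the comparison principle gives $u(t,x)\le w(t)$, hence $\limsup_{t\to\infty}\sup_{x\in\mathbb R}u(t,x)\le u_*$. This controls $u-u_*$ from above uniformly in $x$, so only a matching lower bound on the cone $\{|x|<ct\}$ remains; note this cone is empty unless $0<c<c_*$, so I may assume $0<c<c_*$.

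Second, a local lower bound (the hair-trigger seed). Since $u_0\ge0$ is nontrivial, the strong maximum principle gives $u(1,\cdot)>0$, in particular $u(1,\cdot)\ge m_0>0$ on some $[-L,L]$. Choosing $R$ so large that the Dirichlet principal eigenvalue $\lambda_1\big((-R,R)\big)=(\pi/2R)^2$ is strictly below $f'(0)$, and using $f(s)\ge(f'(0)-\eta)s$ for small $s$, the function $\kappa\phi_R$, with $\phi_R$ the principal eigenfunction and $\kappa$ small, is a stationary subsolution lying below $u(T_0,\cdot)$ for $T_0$ large. By comparison $u(\cdot+T_0,\cdot)$ dominates the solution issued from $\kappa\phi_R$, which is nondecreasing in time and converges to a bounded steady state; since a bounded entire solution of $U''+f(U)=0$ with $0\le U\le u_*$ is concave wherever it is positive and hence constant, and the limit is positive, it must equal $u_*$. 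Thus $u(t,\cdot)\to u_*$ locally uniformly in $x$.

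Third, propagation at speed $c_*$. Fix $0<c<c_*$ and $\epsilon>0$. The core of the argument is the construction of a compactly supported subsolution whose support expands at speed $c$ and whose height rises to $u_*-\epsilon$ behind the moving edge; this uses $f'(0)>0$ precisely because for every $c<c_*=2\sqrt{f'(0)}$ one can fit a positive eigenpair on a moving interval whose width is allowed to grow, so the amplitude is pushed up near $u_*$ rather than saturating at a small value. Using the seed of the previous step as initial data and applying the comparison principle yields $\liminf_{t\to\infty}\inf_{|x|\le ct}u(t,x)\ge u_*-\epsilon$; by the symmetry of the construction the same holds on the left. Combined with the upper estimate and $\epsilon\to0$, this gives $\sup_{|x|<ct}|u-u_*|\to0$. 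I expect this third step, the explicit expanding subsolution that simultaneously realizes any speed $c<c_*$ and climbs arbitrarily close to $u_*$, to be the main obstacle: the upper bound and the local hair-trigger bound are routine comparison arguments, whereas turning a small positive seed into a front invading at the sharp KPP speed requires the delicate balance between the linear growth rate $f'(0)$ and the diffusion that is the heart of the Aronson--Weinberger analysis, and this is exactly where $f(s)\le f'(0)s$ and $f'(0)>0$ are used in an essential way.
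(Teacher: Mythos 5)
The paper does not actually prove this statement: it is recalled verbatim as Theorem 4.1 of \cite{ARO} (Aronson--Weinberger), so the only ``proof'' in the paper is the citation, and your task was effectively to reconstruct the classical argument. Your outline does exactly that, and it is structurally sound: the uniform-in-$x$ upper bound by comparison with the spatially homogeneous ODE $\dot w=f(w)$ (correct, since $f<0$ on $(u_*,\infty)$ and $f(u_*)=0$); the hair-trigger step via a small multiple of the principal Dirichlet eigenfunction on a large interval (correct, since $f'(0)=\beta/\delta-\mu>0$, and your concavity argument for entire steady states $0\le U\le u_*$ does force the increasing-in-time limit to be $u_*$); and the identification $c_*=2\sqrt{f'(0)}$, which is legitimate precisely because of the KPP inequality $f(s)\le f'(0)s$ that the paper records. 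The one place where your proposal is a sketch rather than a proof is step 3, as you yourself flag: you name the needed object (a compactly supported subsolution expanding at speed $c$ whose amplitude climbs to $u_*-\epsilon$) but do not build it. In Aronson--Weinberger this is done by phase-plane analysis of $q''+cq'+f(q)=0$; a cleaner modern completion consistent with your setup is: for $c<c'<c_*$ run the moving-frame bump $\kappa e^{-c'x/2}\sin(\pi x/L)$ (a subsolution for $\kappa$ small, $L$ large, since $f(s)\ge(f'(0)-\eta)s$ near $0$ and $c'<2\sqrt{f'(0)-\eta}$) to obtain a uniform lower bound $\delta>0$ along intervals moving at speed $\pm c'$, and then upgrade $\delta$ to $u_*-\epsilon$ on $\{|x|\le ct\}$ by applying your step 2 uniformly over translates: by translation invariance, the solution issued from $\delta\,\mathbf{1}_{[x_0-L,x_0+L]}$ converges to $u_*$ locally uniformly at a rate independent of $x_0$, and every point with $|x|\le ct$ has been covered by the moving bump for a time interval whose length tends to infinity because $c<c'$. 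With that completion your argument is a correct proof of the cited theorem.
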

		\begin{remark}
	Depending on the initial data, the front can go faster and even accelerate (see \cite{Hamel}). But, in any case, the steady state $u_*$ invades the steady state $0$ at least with a speed $c_*$. 
	\end{remark}
	%%%%%%%%%%%
	\subsubsection{The controlled case}
	In this case, we choose $\Lambda$ to be non-zero, then we have the following result (which is an analog to Theorem \ref{thm:main}):
	\begin{theorem}
	    \label{thm:main:toy}
	    For any initial data $u_0 \geq 0$ with $u_0 \leq u_*$ and ${u_0}_{|\mathbb{R}_+} = 0$ and $c \leq 0$, there exists constants $A, \ \eta > 0$ such that for 
	    \begin{equation}
	         \phi(z) = \begin{cases}
	             0 & \text{ when } z < 0, \\
	             Ae^{-\eta z} & \text{ when } z \geq 0, 
	         \end{cases} 
	         \label{eqn:phi}
	     \end{equation}
	    one has that the solution $u$ of \eqref{eqn:toymodel} with $\Lambda (t,x) = \phi(x -ct)$ satisfies
	    \[ \underset{ t \to +\infty}{\lim} \ \underset{ x > c t }{\sup} u(x,t) = 0. \]
	\end{theorem}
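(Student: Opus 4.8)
The plan is to work in the moving frame $z = x - ct$, in which the release $\phi(x-ct)$, and hence the whole equation, becomes autonomous, and then to exploit the comparison principle established earlier: it suffices to trap $u$ between the subsolution $0$ and a well-chosen supersolution, and afterwards to force the decay of the upper barrier on $\{z>0\}$. First I would reduce everything to the construction of a traveling-wave supersolution of the form $\bar u(t,x)=\bar v(x-ct)$.

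The explicit part of the argument is the choice of $\bar v$ adapted to the release. In the release zone $\{z>0\}$ I would try the ansatz $\bar v(z)=CAe^{-\eta z}$, proportional to $\phi$. The gain of this choice is that $\frac{\bar v}{\bar v+\phi}=\frac{C}{1+C}$ is then \emph{constant}, so that the reaction obeys $\frac{\bar v}{\bar v+\phi}\frac{\beta \bar v}{\frac{\beta \bar v}{K}+\delta}-\mu\bar v \le (\frac{C}{1+C}\frac{\beta}{\delta}-\mu)\bar v$, and the supersolution inequality collapses to the algebraic condition $\eta^{2}-c\eta+\frac{C}{1+C}\frac{\beta}{\delta}\le \mu$. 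Under Assumption~\ref{ass1} (that is, $\frac{\beta}{\delta}>\mu$) this still has solutions, because the competition factor $\frac{C}{1+C}$ can be made small: for a fixed $c\le 0$ I would first pick $C$ small enough that $\frac{C}{1+C}\frac{\beta}{\delta}<\mu$, and then $\eta$ small enough that $\eta^{2}-c\eta$ absorbs the remaining gap. This is exactly the mechanism by which the slowly decaying exponential release defeats the hair-trigger effect: it renders the per-capita growth rate negative uniformly in the release zone. I would then cap the profile at $u_*$ on $\{z\le z_0\}$, with $z_0=\tfrac1\eta\log(CA/u_*)$; the constant $u_*$ is a supersolution since it annihilates the natural reaction and the release only lowers it, and the downward corner at $z_0$ is admissible for supersolutions. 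Choosing $A$ large enough that $z_0\ge 0$ makes $\bar v\ge u_0$ at $t=0$ (using $u_0\le u_*$ on $\mathbb{R}_-$ and $u_0=0$ on $\mathbb{R}_+$), and the comparison principle yields $u(t,x)\le \bar v(x-ct)$; in particular $u\le u_*$ and $u(t,x)\to 0$ as $x-ct\to+\infty$, uniformly in $t$.

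The genuine difficulty, and the step I expect to be the crux, is to upgrade this confinement to the full statement $\sup_{x>ct}u\to 0$. The barrier above only gives $\sup_{x>ct}u\le u_*$, since $\bar v\equiv u_*$ on the strip $0<z<z_0$ lying immediately behind the front, where the reservoir at $u_*$ keeps feeding the solution; a single traveling-wave supersolution cannot, by itself, produce decay down to zero at the front. To close the gap I would show that the trapped orbit converges to $0$ on every half-line $\{z>0\}$, i.e.\ that the population cannot settle onto a positive ``blocking'' profile. Two ingredients seem necessary: first, that the release makes the state $0$ asymptotically stable in the moving frame with a genuine, non-oscillatory exponential rate coming from the $-\mu$ term, so that $0$ possesses a robust basin into which the slowly decaying release confines the orbit; second, a sweeping/iteration argument that, starting from $u\le \bar v$, produces for each $\varepsilon>0$ a later-time supersolution whose value at the front is below $\varepsilon$. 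I would try to realize the iteration by re-running the construction above with the improved bound as new front data, exploiting the autonomy in $z$ and the monotonicity of the flow. Forcing this iteration to converge to $0$, rather than to a nontrivial forced traveling wave, is where the real work lies, and it is precisely here that the traveling-wave analysis and the non-compact (exponentially decaying) shape of the release become indispensable.
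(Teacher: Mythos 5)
Your barrier construction is sound, and it is essentially the paper's Proposition \ref{prop:super} in a different normalization: you force the mating fraction $\bar v/(\bar v+\phi)$ to be uniformly small (the constant $C/(1+C)$, obtained by taking $\bar v$ proportional to $\phi$) so that the effective per-capita growth rate $\frac{C}{1+C}\frac{\beta}{\delta}-\mu$ is negative, while the paper takes $\overline{w}=u_*e^{r(\alpha)x}$ and lets $\phi$ decay no faster than $\overline{w}$ (any $\eta\in[0,-r(\alpha)]$), which bounds the fraction by $u_*/(u_*+A)\le\alpha<\delta\mu/\beta$. The plateau at $u_*$, the downward corner, the ordering with $u_0$, and the appeal to the comparison principle are all as in the paper.

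The genuine gap is in the conclusion, which you correctly flag as unresolved but then propose to close with machinery that is both unexecuted and unnecessary: an iteration/sweeping scheme, asymptotic stability of $0$ in the moving frame, and a classification excluding convergence to a nontrivial forced wave. None of this is needed, and your proof stops exactly where the theorem's content begins. The paper's device is a simple speed mismatch: compare $u$ not with $\bar v(x-ct)$ but with $\bar v(x-c't)$ for some $c'<c\le 0$. The initial ordering $u_0\le\bar v$ is unchanged, and the supersolution property survives the change of speed: either one also moves the release at speed $c'$ (as the paper's proof does), or one keeps $\Lambda(t,x)=\phi(x-ct)$ and notes that $\phi(x-ct)\ge\phi(x-c't)$ for $t\ge 0$ (since $\phi$ is non-increasing and $c'<c$) while the reaction in \eqref{eqn:toymodel} is non-increasing in $\Lambda$, so a supersolution against the weaker release is a supersolution against the stronger one. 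Then for $x>ct$ one has $x-c't>(c-c')t\to+\infty$, hence $\sup_{x>ct}u(t,x)\le\bar v\bigl((c-c')t\bigr)$, which decays exponentially in $t$ as soon as $(c-c')t>z_0$: the plateau of the shifted barrier recedes to the left of $\{x=ct\}$ at linear rate, so there is no strip on which the bound remains stuck at $u_*$. This one-line translation argument replaces your entire third paragraph; in particular, the existence of genuine forced traveling waves (Theorem \ref{thm:front}) is a separate result in the paper and plays no role in the proof of Theorem \ref{thm:main:toy}.
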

	By imposing a control with exponential decay, we succeed in suppressing the insect in a rising set. It is the contrary to what happens naturally (when the stable steady state $u_*$ invades the unstable steady state $0$). Notice that the hypothesis on the initial data $u_0$ takes into account any positive and compactly supported initial data bounded by $u_*$ (up to a translation of the support in $\mathbb{R}_-$). 
	
	To prove such a result, we prove the existence of a traveling wave super-solution $\overline{w}$ of the equation
		\begin{equation}
	    \begin{cases}
	        -cv' - v'' = \dfrac{v}{v + \phi} \dfrac{\beta v}{\frac{\beta v}{K} + \delta} - \mu v, \\
	        v(-\infty) = u_*, \quad v(+\infty) = 0,
	    \end{cases}
		\label{eqn:front}
	\end{equation} 
with $\phi$ imposed and $c$ negative. The result of the super-solution is the following:
	 \begin{proposition}
	 \label{prop:super}
	     For any fixed speed $c$ and any fixed parameter $\alpha \in \left(0,\dfrac{\delta \mu}{\beta}\right)$, there exists a constant $r(\alpha) < 0$ depending on $\alpha, c$ such that the function 
	     \begin{equation}
	         \overline{w}(x) = \begin{cases}
	             u_* & \text{ when } x < 0, \\
	             u_* e^{r(\alpha)x} & \text{ when } x \geq 0,
	         \end{cases}
	         \label{eqn:super}
	     \end{equation}
	     is a super-solution of (\ref{eqn:front}) with $\phi$ defined in \eqref{eqn:phi} for any $\eta \in [0, -r(\alpha)]$ and $A \geq  \dfrac{u_*}{\alpha} - u_* > 0$. 
	 \end{proposition}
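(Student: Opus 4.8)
The plan is to verify directly that $\overline{w}$ defined in \eqref{eqn:super} is a super-solution of \eqref{eqn:front} by checking the associated differential inequality separately on the two regions $\{x<0\}$ and $\{x>0\}$, and confirming that the two pieces glue together with a \emph{concave} corner at $x=0$, which is precisely what a weak super-solution tolerates at a junction. Writing the operator as $\mathcal{L}[v] := -cv' - v''$ and the reaction term as $\mathcal{N}[v] := \frac{v}{v+\phi}\frac{\beta v}{\frac{\beta v}{K}+\delta} - \mu v$, I must establish $\mathcal{L}[\overline{w}] \geq \mathcal{N}[\overline{w}]$ in each region, together with the corner condition $\overline{w}'(0^-) \geq \overline{w}'(0^+)$, and finally check the boundary values.

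On $\{x<0\}$ we have $\overline{w}\equiv u_*$, so $\mathcal{L}[\overline{w}]=0$; since $\phi=0$ there, $\mathcal{N}[\overline{w}] = \frac{\beta u_*}{\frac{\beta u_*}{K}+\delta} - \mu u_* = f(u_*) = 0$ because $u_*$ is the nontrivial equilibrium. Hence the inequality holds with equality. At the junction, $\overline{w}'(0^-)=0$ while $\overline{w}'(0^+)=r(\alpha)u_*<0$, so the first derivative jumps downward; equivalently the distributional $\overline{w}''$ carries a negative Dirac mass at $0$, which only reinforces the super-solution inequality. The boundary conditions $\overline{w}(-\infty)=u_*$ and $\overline{w}(+\infty)=0$ hold by construction since $r(\alpha)<0$.

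On $\{x>0\}$, writing $\overline{w}=u_*e^{r(\alpha)x}$, one computes $\mathcal{L}[\overline{w}] = -\bigl(r(\alpha)^2 + c\,r(\alpha)\bigr)\overline{w}$. For the right-hand side I would use two elementary bounds: first, dropping the positive term $\tfrac{\beta\overline{w}}{K}$ in the denominator gives $\frac{\beta\overline{w}}{\frac{\beta\overline{w}}{K}+\delta}\leq\frac{\beta}{\delta}\overline{w}$; second, and this is the crucial step, I claim $\frac{\overline{w}}{\overline{w}+\phi}\leq\alpha$ on $\{x>0\}$. This is equivalent to $\phi\geq\frac{1-\alpha}{\alpha}\overline{w}$, i.e. $A\,e^{-\eta x}\geq\frac{1-\alpha}{\alpha}u_*e^{r(\alpha)x}$, i.e. $A\geq\frac{1-\alpha}{\alpha}u_*\,e^{(r(\alpha)+\eta)x}$; since $\eta\leq -r(\alpha)$ the exponent is nonpositive for $x>0$, so it suffices that $A\geq\frac{1-\alpha}{\alpha}u_* = \frac{u_*}{\alpha}-u_*$, which is exactly the stated lower bound on $A$. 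Combining the two bounds yields $\mathcal{N}[\overline{w}]\leq\bigl(\frac{\alpha\beta}{\delta}-\mu\bigr)\overline{w}$, so it remains to choose $r(\alpha)$ so that $-\bigl(r(\alpha)^2+c\,r(\alpha)\bigr)\geq\frac{\alpha\beta}{\delta}-\mu$, i.e. $r(\alpha)^2 + c\,r(\alpha)\leq\mu-\frac{\alpha\beta}{\delta}$.

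The assumption $\alpha<\frac{\delta\mu}{\beta}$ guarantees $\mu-\frac{\alpha\beta}{\delta}>0$, so the quadratic $r\mapsto r^2 + cr - (\mu-\frac{\alpha\beta}{\delta})$ has one negative and one positive root; I would take $r(\alpha)$ to be the negative root $\frac{-c-\sqrt{c^2+4(\mu-\alpha\beta/\delta)}}{2}<0$ (for $c\leq 0$ this is indeed negative, since the square root exceeds $|c|$), which turns the last inequality into an equality and makes $\overline{w}$ decay as steeply as allowed. Moreover $\alpha<1$, which follows from Assumption \ref{ass1} since $\beta-\mu\delta>0$ forces $\frac{\delta\mu}{\beta}<1$, ensuring $\frac{u_*}{\alpha}-u_*>0$. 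The main obstacle is the crucial inequality $\frac{\overline{w}}{\overline{w}+\phi}\leq\alpha$: it is what couples the release amplitude $A$ and decay rate $\eta$ to the super-solution's decay rate $r(\alpha)$, and it is precisely where the exponential form of the release $\phi$ is genuinely exploited. A secondary, more routine point is to make the weak sense of the super-solution at the corner $x=0$ precise, so that the downward jump of $\overline{w}'$ is correctly interpreted.
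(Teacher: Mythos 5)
Your proposal is correct and follows essentially the same route as the paper's proof: the same piecewise verification with equality on $\{x<0\}$, the same key bound $\frac{\overline{w}}{\overline{w}+\phi}\leq\alpha$ obtained from $\eta\leq -r(\alpha)$ and $A\geq\frac{u_*}{\alpha}-u_*$, the same choice of $r(\alpha)$ as the negative root of $r^2+cr-\left(\mu-\frac{\alpha\beta}{\delta}\right)=0$, and the same concave-corner condition $\overline{w}'(0^-)>\overline{w}'(0^+)$. The only (immaterial) difference is one of presentation: the paper first solves the linearized problem on $[0,+\infty)$ to define $r(\alpha)$ and then checks the nonlinear inequality, whereas you derive the inequality first and choose $r(\alpha)$ last.
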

	Indeed, from the existence of this super-solution we have the following proof of Theorem \ref{thm:main:toy}:
	\begin{proof}[Proof of Theorem \ref{thm:main:toy}]
	Let $\overline{u}(t,x) = \overline{w}(x - c't)$, $\Lambda(t,x) = \phi(x - c't)$ with $c'<c$, and $\overline{w}, \phi$ provided by Proposition \ref{prop:super} with the speed fixed at $c'$. It is clear that with such a choice of $\Lambda(t,x) $, we have that $\overline{u}$ is a super-solution of \eqref{eqn:toymodel}. Thanks to the definition of $\overline{w}$, we have $u_0(x) \leq \overline{u}(t = 0, x)$, therefore, the comparison principle implies that for any $t>0, \ x \in \mathbb{R}$, $u(t,x) \leq \overline{u}(t,x)$. Since $\overline{u}(t,x) \leq u_*e^{r(\alpha)[c-c']t} $ when  $x>ct$, the result follows by taking the limit $t \to \infty$.
	\end{proof}
	From this super-solution, we go further and state the existence of traveling wave solution with prescribed negative speed (i.e. solution of \eqref{eqn:front}):
	\begin{theorem}
	    \label{thm:front}
	     For any $c<0$, there exists a control functions $\phi \geq 0$ with $\int_\mathbb{R} \phi < +\infty$ such that \eqref{eqn:front}	admits a solution $v$. 
	 \end{theorem}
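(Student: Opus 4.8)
The plan is to realize the front by squeezing it between the super-solution $\overline w$ of Proposition \ref{prop:super} and a \emph{non-trivial} sub-solution $\underline w$ carrying the correct left limit $u_*$. I keep the control $\phi$ and the super-solution $\overline w$ exactly as produced by Proposition \ref{prop:super}, fixing some admissible $\alpha$ and $A\geq u_*/\alpha-u_*$, $\eta\in(0,-r(\alpha)]$; this $\phi$ already satisfies $\int_\mathbb{R}\phi=A/\eta<+\infty$, so the integrability requirement of the statement is automatic. Writing $g(v,\phi):=\frac{v}{v+\phi}\frac{\beta v}{\frac{\beta v}{K}+\delta}-\mu v$ for the reaction in \eqref{eqn:front}, the trivial sub-solution $\underline w\equiv 0$ is useless (it would only squeeze the left limit down to $0$), so the whole difficulty is to build a sub-solution with $\underline w(-\infty)=u_*$ and $\underline w\le\overline w$.

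\emph{The sub-solution (the crux).} On $\{x<0\}$ one has $\phi\equiv0$, so there $g(v,0)=f(v)$ and the equation reduces to the natural ODE $v''+cv'+f(v)=0$. I take for $\underline w$, on $\{x<0\}$, the branch of the unstable manifold of the saddle $(u_*,0)$ that leaves $u_*$ downward, i.e. the solution with $v(-\infty)=u_*$, $v<u_*$, $v'<0$. With the Lyapunov functional $E=\tfrac12(v')^2+F(v)$, $F(v)=\int_0^v f$, the identity $E'=-c(v')^2\ge0$ (because $c<0$) forbids the trajectory from turning around at any $v_m\in(0,u_*)$, since that would force $E=F(v_m)<F(u_*)=E(-\infty)$; hence $v$ decreases monotonically and must reach $0$ at a finite point with strictly negative slope. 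After a translation (the ODE is autonomous) I place this first zero at $x=0$ and extend $\underline w$ by $0$ on $\{x\ge0\}$. The resulting $\underline w$ is continuous, equals $u_*$ at $-\infty$, lies in $(0,u_*)$ on $\{x<0\}$ (hence below $\overline w\equiv u_*$ there) and vanishes on $\{x\ge0\}$ (where $\overline w\ge0$), so $\underline w\le\overline w$. It satisfies the sub-solution inequality with equality on $\{x<0\}$ and on $\{x>0\}$ (where $g(0,\phi)=0$), while at $x=0$ the slope jumps upward from $v'(0^-)<0$ to $0$, a convex corner, which is exactly the admissible corner for a sub-solution.

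\emph{Existence of the front.} With the ordered pair $\underline w\le\overline w$ in hand, I obtain a solution by exhaustion. For each $R>0$ I solve $-cv'-v''=g(v,\phi)$ on $(-R,R)$ with boundary values in $[\underline w,\overline w]$, say $v(-R)=u_*$ and $v(R)=0$; the classical method of sub- and super-solutions (monotone iteration after shifting the reaction by a large constant to make the operator monotone) yields $v_R$ with $\underline w\le v_R\le\overline w$. Since $\phi$ is bounded, interior elliptic estimates give $W^{2,p}_{loc}$, hence $C^{1,\alpha}_{loc}$, bounds uniform in $R$; Arzelà--Ascoli and a diagonal extraction produce $v_{R_n}\to v$ in $C^1_{loc}(\mathbb{R})$, a strong solution of \eqref{eqn:front} on $\mathbb{R}$ (the term is merely discontinuous in $x$ at $0$, so $v\in W^{2,p}_{loc}$ and $v''$ jumps at $0$). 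The two limit conditions then come for free: since $\underline w\le v\le\overline w$ with $\underline w(+\infty)=\overline w(+\infty)=0$ and $\underline w(-\infty)=\overline w(-\infty)=u_*$, the squeeze yields $v(+\infty)=0$ and $v(-\infty)=u_*$.

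\emph{Main obstacle.} The delicate point is the sub-solution, not the limiting procedure. Because $\phi$ jumps at $x=0$, no sub-solution can stay near $u_*$ across $x=0$: there the reaction drops to the strictly negative value $g(u_*,A)=-\mu u_*A/(u_*+A)$ while the diffusion--advection part stays near $0$, so the inequality fails. The construction above circumvents this by forcing the \emph{entire} descent from $u_*$ to $0$ to happen inside $\{x\le0\}$, where $\phi=0$ and the natural monostable reaction does the work, the energy monotonicity being what guarantees that this descent is monotone and terminates in finite range. It remains to check the routine compatibility of these generalized (cornered, and $x$-discontinuous) sub/super-solutions with the comparison principle established earlier for the scalar equation.
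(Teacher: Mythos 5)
Your proposal is correct, and its overall architecture is exactly that of the paper: squeeze a solution of \eqref{eqn:front} between the super-solution of Proposition \ref{prop:super} and a sub-solution that performs the whole descent from $u_*$ to $0$ inside $\{x\le 0\}$ (where $\phi\equiv 0$), is extended by zero on $\{x\ge 0\}$ with an admissible convex corner, and then invoke the sub/super-solution method. The one genuine difference is how the sub-solution is built. The paper (Proposition \ref{prop:sub}) solves the \emph{zero-speed} ODE $-w''=f(w)$ on $\mathbb{R}_-$ with the slope $w'(0^-)=-\sqrt{2\int_0^{u_*}f}$ prescribed by the first integral, and then uses the sign of the discarded drift term, $-c\underline{w}'<0$ for $c<0$ and $\underline{w}'<0$, to get the strict sub-solution inequality; this is shorter (Cauchy--Lipschitz plus one energy identity) but yields only an inequality on $\{x<0\}$. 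You instead solve the full speed-$c$ ODE $v''+cv'+f(v)=0$ along the unstable manifold of the saddle $(u_*,0)$, and your Lyapunov computation $E'=-c(v')^2\ge 0$ correctly rules out turning points and forces the trajectory to hit $0$ at finite range with negative slope (the same energy bound also excludes convergence to an interior limit, which is the small step you left implicit); this buys an exact solution on $\{x<0\}$, at the price of a slightly longer phase-plane argument. The two constructions are interchangeable here, since only the sub-solution inequality, the ordering $\underline{w}\le\overline{w}$, and the limits at $\pm\infty$ are used. Finally, your exhaustion-on-$(-R,R)$ argument with uniform $W^{2,p}_{\mathrm{loc}}$ estimates and a diagonal extraction is a spelled-out version of the step the paper delegates to the classical reference \cite{SMO}, and your handling of the $x$-discontinuity of $\phi$ at $0$ (solution in $W^{2,p}_{\mathrm{loc}}$, jump in $v''$) is in fact more careful than the paper's claim of a ``classical'' solution.
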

	 The existence of traveling wave solutions relies on the super-solution presented in Proposition \ref{prop:super} and the existence of a sub-solution of \eqref{eqn:front} that we will introduce later on in Proposition \ref{prop:sub} in section \ref{subsec:subsolution:eqn}. 
	 %%%%%%%%%%%%%%%%%%%%%%%%%%
	 \subsection{The general system}
	 In this part, we focus on studying the existence of traveling wave solutions for system \eqref{eqn:main} and then apply it to prove Theorem \ref{thm:main}. In the rest of the paper, we study this system in the subset $\{E \leq K\}$ of the positive cone since we have the following property
	 \begin{lemma}
	    \label{lem:invariant}
	    On the positive cone $\{E \geq 0, F\geq 0, M \geq 0\}$, the subset $\{E \leq K\}$ is time invariant, that is, if $0 \leq E^0 \leq K$, then $E(t, \cdot) \leq K$ for all $t > 0$. 
	 \end{lemma}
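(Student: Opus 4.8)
The plan is to exploit the partially degenerate structure of the system: the $E$-equation carries no diffusion term, so at each fixed position $x$ the function $t \mapsto E(t,x)$ solves a scalar ordinary differential equation, coupled to the rest of the system only through the nonnegative source $F$. Rewriting the right-hand side as
\[
\partial_t E = \beta F - \Big( \frac{\beta F}{K} + \nu_E + \mu_E \Big) E =: b(t,x) - a(t,x)\, E,
\]
I observe that, as long as we work inside the positive cone (so that $F \geq 0$), both coefficients satisfy $a(t,x) \geq \nu_E + \mu_E > 0$ and $b(t,x) \geq 0$.

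First I would check that the constant $K$ is a super-solution of this pointwise ODE. Evaluating the source at $E = K$ gives
\[
b(t,x) - a(t,x) K = \beta F - \beta F - (\nu_E + \mu_E) K = -(\nu_E + \mu_E) K \leq 0,
\]
so that $\partial_t K = 0 \geq b - aK$. This is precisely the statement that the reaction term pushes $E$ strictly inward at the threshold $E = K$.

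Next, setting $z(t,x) := E(t,x) - K$, the two displays above yield $\partial_t z = (b - aK) - a z \leq - a z$ with initial datum $z(0, \cdot) = E^0 - K \leq 0$. Since $a \geq 0$, multiplying by the integrating factor $\exp\big( \int_0^t a(s,x)\, ds \big)$ and integrating in $t$ at fixed $x$ (a one-line Grönwall estimate) gives $z(t,x) \leq z(0,x) \leq 0$, hence $E(t,x) \leq K$ for every $t > 0$ and every $x$. Because the argument is carried out pointwise in $x$, no regularity in the spatial variable is needed beyond what is required to make sense of the ODE.

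The only genuine prerequisite, and the point I would treat as the main obstacle, is the nonnegativity of $F$ (indeed of the whole solution) that is needed to guarantee $a, b \geq 0$; this is exactly why the statement is phrased on the positive cone. Establishing that the positive cone is itself invariant is a separate maximum-principle/invariant-region argument for the full system, and it should be run before — or simultaneously with — the present estimate. I would also emphasize that the absence of diffusion in the $E$-equation is what permits the clean pointwise ODE comparison: had $E$ diffused, one would instead invoke the parabolic comparison principle with the constant $K$ as a spatially uniform super-solution, the inward-pointing sign of the reaction term at $E = K$ again being the key ingredient.
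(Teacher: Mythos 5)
Your proof is correct and rests on exactly the same key observation as the paper's: since the $E$-equation carries no diffusion, one can argue pointwise in $x$, and at $E=K$ the reaction term reduces to $-(\nu_E+\mu_E)K<0$. The difference is only in how the conclusion is drawn. The paper's proof is a one-line barrier argument (``whenever $E(t_0)=K$ we have $\partial_t E(t_0)=-(\nu_E+\mu_E)K<0$; the result follows''), which implicitly appeals to a continuity/first-touching-time argument; your integrating-factor estimate on $z=E-K$ replaces that implicit step by the explicit inequality $z(t,x)\leq z(0,x)\,e^{-\int_0^t a(s,x)\,ds}\leq 0$, so your write-up is, if anything, more complete than the paper's. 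One small correction to your closing remarks: the nonnegativity of $F$ is not actually used anywhere in your argument. The cancellation $b-aK=\beta F-\beta F-(\nu_E+\mu_E)K$ is exact whatever the sign of $F$, and the integrating factor $e^{\int_0^t a(s,x)\,ds}$ is positive regardless of the sign of $a$, so $z(t)\leq z(0)e^{-\int_0^t a}\leq 0$ holds without invoking invariance of the positive cone (only local-in-time integrability of $a$, i.e.\ of $F$, along the trajectory is needed, which classical well-posedness provides). Thus the ``main obstacle'' you flag is not an obstacle for this particular lemma; positivity of the cone matters elsewhere in the paper (e.g.\ for the comparison principle of Theorem \ref{thm:comparison}), not here.
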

	 \begin{proof}
	 For any time $t_0 > 0$ such that $E(t_0) = K$, we have $\partial_t E(t_0) = -(\nu_E + \mu_E)K < 0$. The result follows.  
	 \end{proof}
	 We recall that in the subset $\{E \leq K\}$, system \eqref{eqn:main} is not cooperative due to the introduction of sterile males $M_s > 0$. Indeed, from the second equation of \eqref{eqn:main}, we have the reaction term
	 $$
	 g(E, F, M, M_s) := r\nu_E E \dfrac{M}{M + \gamma M_s} - \mu_F F,
	 $$ 
	 and $\dfrac{\partial g}{\partial M_s} = -\dfrac{\gamma r \nu_E E M}{(M + \gamma M_s)^2} < 0$ on the positive cone. Hence, we introduce a new comparison principle that can be applied to system \eqref{eqn:main} in the following part and provide proof for it in the Appendix \ref{app:comparison}. 
	 We define the nonlinear vector-valued function
    \begin{equation}
        \mathbf{f}(E,F,M; \psi) = \begin{bmatrix} f_1(E, F, M) \\ f_2(E, F, M) \\ f_3(E, F, M) \end{bmatrix} = \begin{bmatrix} \beta F \left( 1 - \frac{E}{K} \right) - (\nu_E + \mu_E)E \\ r\nu_E E \frac{M}{M + \gamma \psi} - \mu_F F \\ (1-r)\nu_E E - \mu_M M \end{bmatrix},
    \end{equation}
    where $\psi(t,x)$ is a fixed function.  Denote $U(t,x) = (E, F, M)(t,x) \in \mathbb{R}^3_+$ then we obtain the following system 
    \begin{equation}
        \partial_t U - D\partial_{xx} U = \mathbf{f}(U; \psi).
        \label{eqn:fix}
    \end{equation}
    The existence and uniqueness of a solution to system \eqref{eqn:main} with have already been proved in \cite{ANG} by using the classical theory of nonlinear parabolic equations. 
    
    Next, we introduce the following theorem
	 \begin{theorem}[Comparison principle for \eqref{eqn:main}] \label{thm:comparison}
	     Consider two functions $M_s^1, M_s^2 \in L^1_\mathrm{loc}((0,+\infty) \times \mathbb{R})$ such that $0 \leq M_s^2(t,x) \leq M_s^1(t,x) $ for all $t \geq 0, x \in \mathbb{R}$. Suppose that 
	     \begin{itemize}
	         \item $(E^1, F^1, M^1)$ is a sub-solution of system \eqref{eqn:fix} with $\psi \equiv M_s^1$,
	         \item $(E^2, F^2, M^2)$ is a super-solution of system \eqref{eqn:fix} with $\psi \equiv M_s^2$, 
	         \item $(E^1, F^1, M^1)(t=0) \leq (E^2, F^2, M^2)(t= 0)$, for any $x \in \mathbb{R}$,
	     \end{itemize}
	     then 
	     $$
	     (E^1, F^1, M^1)(t,x) \leq (E^2, F^2, M^2)(t,x),
	     $$
	     for all $t > 0, x \in \mathbb{R}$. 
	 \end{theorem}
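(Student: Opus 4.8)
The theorem states a comparison principle for a system that has lost its cooperative structure because $\partial g / \partial M_s < 0$. The plan is to exploit a crucial monotonicity observation: while the full system is not cooperative in $(E,F,M,M_s)$ jointly, the reaction term $\mathbf{f}(U;\psi)$ is antimonotone in the frozen parameter $\psi$, meaning a larger $\psi$ gives a smaller reaction. Since we are given $M_s^2 \leq M_s^1$, the sub-solution uses the \emph{larger} sterile-male profile $M_s^1$ and the super-solution uses the \emph{smaller} profile $M_s^2$. This is precisely the ordering that will let the antimonotonicity work in our favor.

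**Reducing to a cooperative comparison.**

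The key step is to verify that the three-component system \eqref{eqn:fix}, for a \emph{fixed} $\psi$, is cooperative on the invariant region $\{E \leq K\}$. Indeed, one checks the off-diagonal monotonicity of $\mathbf{f}(E,F,M;\psi)$: we have $\partial f_1/\partial F = \beta(1 - E/K) \geq 0$ on $\{E \leq K\}$ (using Lemma \ref{lem:invariant}), $\partial f_2/\partial E = r\nu_E M/(M+\gamma\psi) \geq 0$, $\partial f_2/\partial M = \gamma r\nu_E E \psi/(M+\gamma\psi)^2 \geq 0$, and $\partial f_3/\partial E = (1-r)\nu_E \geq 0$, while $\partial f_1/\partial M = \partial f_3/\partial F = 0$. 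Thus for each frozen $\psi$ the system is a standard cooperative reaction-diffusion system, for which the classical comparison principle applies.

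**Combining the two effects.**

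The plan is then to chain two inequalities. First I would introduce the solution $\widetilde{U} = (\widetilde{E}, \widetilde{F}, \widetilde{M})$ of the genuine equation \eqref{eqn:fix} with $\psi \equiv M_s^1$ and the same initial data as the super-solution $(E^2,F^2,M^2)$. Since $(E^1,F^1,M^1)$ is a sub-solution of \eqref{eqn:fix} with $\psi \equiv M_s^1$ and $\widetilde{U}$ is its exact solution with ordered initial data, the cooperative comparison principle from the previous step yields $(E^1,F^1,M^1) \leq \widetilde{U}$. Second, because $M_s^2 \leq M_s^1$ and $\mathbf{f}$ is nonincreasing in $\psi$, the exact solution $\widetilde{U}$ (with the larger $\psi = M_s^1$) is itself a sub-solution of the equation with the smaller parameter $\psi = M_s^2$; comparing it against the super-solution $(E^2,F^2,M^2)$, which solves the problem with $\psi \equiv M_s^2$, gives $\widetilde{U} \leq (E^2,F^2,M^2)$. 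Concatenating these two inequalities produces the desired conclusion.

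**The main obstacle.**

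The delicate point, and the step I expect to require the most care, is justifying the comparison at the level of sub-/super-solutions when these are only weak or viscosity objects and $M_s^1, M_s^2$ lie merely in $L^1_{\mathrm{loc}}$; the low regularity of the sterile-male profiles means the antimonotonicity argument in $\psi$ must be made rigorous without appealing to pointwise classical derivatives of the solution in that variable. One would handle this by a regularization or approximation argument, replacing $M_s^1, M_s^2$ by smooth approximants, applying the classical theory, and passing to the limit using stability of the comparison inequality. A secondary technical issue is the degeneracy of the first equation (no diffusion in $E$), so the comparison principle must be the one appropriate for partially degenerate cooperative systems rather than the fully parabolic version; this is where the detailed argument, deferred to Appendix \ref{app:comparison}, does its real work.
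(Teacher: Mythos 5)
Your proposal is correct and rests on the same two pillars as the paper's own proof: cooperativity of the frozen-$\psi$ system \eqref{eqn:fix} on the invariant set $\{E \leq K\}$, and antimonotonicity of $\mathbf{f}$ in $\psi$, i.e.\ $\mathbf{f}(U;M_s^1) \leq \mathbf{f}(U;M_s^2)$ on the positive cone. The only difference is cosmetic: the paper dispenses with your intermediate exact solution $\widetilde{U}$ (and thus with any appeal to existence theory) by observing that antimonotonicity already makes $(E^1,F^1,M^1)$ itself a sub-solution of the problem with $\psi \equiv M_s^2$, after which a single application of the cooperative comparison principle against $(E^2,F^2,M^2)$ concludes.
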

	 Next, we will use Theorem \ref{thm:comparison} for studying system \eqref{eqn:main} and prove the main result in Theorem \ref{thm:main}. 
	 	%%%%%%%%%%%%%%%
	 \subsubsection{Existence of invasive travelling waves when $M_s = 0$}
    \label{subsec:wave1}
	When there is no regulation of sterile males, the following system is a special case of system \eqref{eqn:fix} with $\phi \equiv 0$
	\begin{equation}
		\begin{cases}
		    \partial_t E  = \beta F \Big(1-\frac{E}{K}\Big) - (\nu_E + \mu_E) E,  \\
			\partial_t F - D\partial_{xx}F =  r \nu_E E - \mu_F F, \\
			\partial_t M - D\partial_{xx}M =  (1-r)\nu_E E - \mu_M M,  
		\end{cases}
		\label{eqn:s1}
	\end{equation}
	It is obvious that $(0,0,0)$ is an equilibrium of (\ref{eqn:s1}). When the basic offspring number $\mathcal{R} > 1$, this system has the second equilibrium $(E^*,F^*,M^*)$ where 
	\begin{equation}
		\begin{aligned}
			E^* = & K\dfrac{\beta r \nu_E - \mu_F (\nu_E + \mu_E)}{\beta r \nu_E} > 0, \\
			F^* = & K\dfrac{\beta r \nu_E - \mu_F (\nu_E + \mu_E)}{\beta \mu_F} > 0, \\
			M^* = & K \dfrac{1-r}{r}\dfrac{\beta r \nu_E - \mu_F (\nu_E + \mu_E)}{\beta\mu_M} > 0.
		\end{aligned}		
		\label{eqn:s2}
	\end{equation}
		
	To study the traveling wave problem, we consider solutions of (\ref{eqn:s1}) of the following forms
	\begin{equation}
	    E(t,x) = E(x-ct), \qquad F(t,x) = F(x-ct), \qquad M(t,x) = M(x-ct), 
	\end{equation}
	where $c$ is the wave speed. Then system (\ref{eqn:s1}) becomes 
	\begin{equation}
	    \begin{cases}
		    -cE'  = \beta F \Big(1-\frac{E}{K}\Big) - (\nu_E + \mu_E) E,  \\
			-cF' - DF'' =  r \nu_E E - \mu_F F, \\
			-cM' - DM'' =  (1-r)\nu_E E - \mu_M M,  
		\end{cases}
	\end{equation}
    The next result shows that there exists a non-increasing traveling wave solution that converges to $(E^*,F^*,M^*)$ at $-\infty$ and $(0,0,0)$ at $+\infty$. 
	\begin{theorem}
	\label{thm:natural}
	    If the basic offspring number $\mathcal{R} > 1$, then there exists a minimal speed $\overline{c} > 0$ such that system (\ref{eqn:s1}) admits a non-increasing travelling wave solution connecting $(E^*,F^*,M^*)$ to $(0,0,0)$ for any speed $c \geq \overline{c}$. 
	\end{theorem}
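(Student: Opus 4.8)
The plan is to exploit the cooperative, monostable structure of \eqref{eqn:s1} on the invariant region $\{E \le K\}$. The first observation I would use is that the subsystem for $(E,F)$ is closed: neither of the first two equations of \eqref{eqn:s1} involves $M$, whereas the third equation is linear in $M$ and merely forced by $E$. I would therefore first construct a non-increasing front $(E,F)$ for the two-component subsystem, and then recover $M$ afterwards by solving the slaved linear equation. On $\{E \le K\}$ the $(E,F)$ subsystem is cooperative, since $\partial_F\big[\beta F(1-E/K) - (\nu_E+\mu_E)E\big] = \beta(1-E/K) \ge 0$ and $\partial_E\big[r\nu_E E - \mu_F F\big] = r\nu_E > 0$; moreover, when $\mathcal{R}>1$ the equilibrium $(0,0)$ is linearly unstable while $(E^*,F^*)$ from \eqref{eqn:s2} is the unique nontrivial stable state, so we are in a genuinely monostable situation and can invoke the comparison principle (Theorem \ref{thm:comparison} with $M_s\equiv 0$).

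To pin down the minimal speed, I would linearize the $(E,F)$ subsystem at $(0,0)$ in the moving frame $z = x - ct$ and seek decaying modes $(E,F)\sim(a,b)\,e^{-\lambda z}$ with $\lambda>0$. Eliminating the eigenvector yields the dispersion relation
\[
(c\lambda + \nu_E + \mu_E)\,(c\lambda + \mu_F - D\lambda^2) = \beta r \nu_E .
\]
Because the $E$-equation is first order, this is a cubic in $\lambda$. I would define $\overline{c} = \inf\{\,c>0 : \text{the relation admits a positive real root } \lambda\,\}$, realized at a tangency (double root), and check $\overline c>0$: at $\lambda\to 0^+$ the left-hand side tends to $\mu_F(\nu_E+\mu_E) < \beta r\nu_E$ precisely because $\mathcal{R}>1$, which prevents admissible roots for small $c$. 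For $c\ge\overline c$ this furnishes two positive rates $\lambda_1\le\lambda_2$ (coinciding at $c=\overline c$) together with a positive eigenvector, which is exactly the ingredient needed to build barriers.

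With these rates in hand, I would construct a super-solution of truncated-exponential type, $(\overline E,\overline F) = \big(\min(E^*,a\,e^{-\lambda z}),\,\min(F^*,b\,e^{-\lambda z})\big)$ with $(a,b)$ the positive eigenvector at rate $\lambda\in[\lambda_1,\lambda_2]$, and a matching sub-solution: for $c>\overline c$ a difference of two exponentials of the form $e^{-\lambda_1 z} - q\,e^{-(\lambda_1+\varepsilon)z}$, and at the critical speed the degenerate-root correction $z\,e^{-\overline\lambda z}$. I would then solve the truncated problem on $[-n,n]$ by a monotone iteration sandwiched between sub- and super-solution, the order-preservation being guaranteed by cooperativity and the comparison principle, and pass to the limit $n\to\infty$ using uniform bounds and parabolic/elliptic estimates. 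At $+\infty$ the front is squeezed to $0$ by the super-solution; at $-\infty$, monotonicity forces convergence to an equilibrium, which by the ordering can only be $(E^*,F^*)$. Monotonicity in $z$ would be obtained by a sliding argument. Finally, given the front $E$, the linear equation $-cM'-DM'' = (1-r)\nu_E E - \mu_M M$ has a unique bounded solution (explicit via the Green kernel), which inherits monotonicity and connects $M^*$ to $0$, completing the full front $(E,F,M)$.

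The main obstacle I anticipate is the degeneracy of the $E$-equation, which lacks its own diffusion and therefore breaks the standard compactness/regularity used to pass to the limit on $\mathbb{R}$; I would handle this either by treating $E$ as determined from $F$ through its first-order ODE in the moving frame, or by a vanishing-viscosity regularization $\varepsilon\,\partial_{xx}E$ with uniform estimates as $\varepsilon\to 0$. The second delicate point is the sub-solution at the critical speed $c=\overline c$, where the double root requires the polynomial correction and careful sign checks, together with the verification that the $-\infty$ limit is genuinely $(E^*,F^*)$ rather than an intermediate state. Since an analogous system is treated in \cite{ANG}, I would follow that construction, reproducing it here for completeness.
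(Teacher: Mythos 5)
Your proposal is essentially correct, but it takes a genuinely different route from the paper. The paper does not construct the front by hand: it invokes Weinberger's abstract theorem for cooperative monostable systems (Theorem 4.2 in \cite{WEI}) applied to the full $3\times 3$ system, which amounts to verifying that $\mathbf{f}$ has exactly two ordered zeros, is cooperative on $\{E\leq K\}$, is sub-linear ($\mathbf{f}(\rho\zeta)\leq\rho\,\mathbf{f}'(\mathbf{0})\zeta$), and that the Jacobian at $\mathbf{0}$ is in Frobenius form with the principal eigenvalue of the upper-left $(E,F)$ block dominating the block $-\mu_M$; the minimal speed is then given by $\overline{c}=\inf_{\mu>0}\gamma_1(\mu)/\mu$, whose dispersion relation $(c\mu+\nu_E+\mu_E)(c\mu+\mu_F-D\mu^2)=\beta r\nu_E$ is exactly the cubic you wrote, so the two characterizations of $\overline{c}$ agree. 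Your decomposition --- treating $(E,F)$ as a closed cooperative subsystem, building the front for it by sub-/super-solutions and monotone iteration, then recovering $M$ from the slaved linear equation via the positive Green kernel (which indeed preserves monotonicity and gives $M(-\infty)=(1-r)\nu_E E^*/\mu_M=M^*$) --- is not in the paper, and it buys something real: in the paper's Weinberger framework the coupling of $M$ forces the extra eigenvalue comparison $\gamma_1(\overline{\mu})>\gamma_2(\overline{\mu})$, i.e.\ the parameter condition (\ref{eqn:cond}), which is not implied by $\mathcal{R}>1$ alone and on which the paper's proof is implicitly conditional; your slaving of $M$ removes that condition entirely. The price is that you must carry out the classical KPP machinery yourself (truncated-exponential super-solution, two-exponential sub-solution for $c>\overline{c}$ and the $z e^{-\overline{\lambda}z}$ correction at $c=\overline{c}$, monotone iteration, sliding), including the genuine technical point you correctly flag: the degenerate $E$-equation, which you can indeed handle by solving the first-order ODE for $E$ in terms of $F$ inside the iteration (this is also how the cited reference \cite{ANG} proceeds), whereas the paper outsources all of this to \cite{WEI}.
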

	%This theorem has been proved in (\cite{ANG}) (see 3.2.1). 
	For the sake of completion, we present our proof for this theorem based on the result of \cite{WEI} for a monostable system in Appendix \ref{app:A1}. 
	%%%%%%%%%%%%%%%%%%%%%%%%%%%%%%
	\subsubsection{The controlled case $M_s > 0$}
	When the sterile males are released, the mosquito population is modeled by system \eqref{eqn:main} and we obtained the main result in Theorem \ref{thm:main}. The idea to prove this theorem is inspired by the proof of Theorem \ref{thm:main:toy} which is based on the comparison principle presented in Theorem \ref{thm:comparison}.  
	
	Before treating the main system, we first fix the distribution of sterile males $M_s(t,x) = \phi(x-ct)$ by assuming that the sterile males neither die nor diffuse. We introduce the following equation for the travelling wave solution $(E, F, M)(t,x) = (\phi_E, \phi_F, \phi_M)(x-ct)$ where $(\phi_E, \phi_F, \phi_M)$ satisfies the following system  
	\begin{equation}
	    \begin{cases}
	    -c\phi_E'  =  \beta \phi_F \Big(1-\frac{\phi_E}{K}\Big) - (\nu_E + \mu_E) \phi_E,  \\
		-c\phi_F' - D\phi_F'' =  r \nu_E E \frac{\phi_M}{\phi_M+\gamma \phi} - \mu_F \phi_F, \\
		-c\phi_M' - D\phi_M'' =  (1-r)\nu_E \phi_E - \mu_M \phi_M,  \\
		(\phi_E, \phi_F, \phi_M) (-\infty) = (E^*, F^* , M^*),  \qquad (\phi_E, \phi_F, \phi_M)(+\infty) = (0, 0, 0).
	    \end{cases}
	    \label{eqn:main2}
	\end{equation}
    with a negative speed $c$ and $\phi$ imposed. Note that, system \eqref{eqn:main2} is cooperative on the positive cone, thus we can apply directly the comparison principle for a cooperative system (see e.g. \cite{VOL}, Chapter 5, \S 5). Existence of the super-solution of \eqref{eqn:main2} is provided by the following Proposition:
		\begin{proposition}
	\label{prop:super2}
	    Assume that the basic offspring number $\mathcal{R} > 1$, then for any speed $c < 0$ and with the control function
	    \begin{equation}
	        \label{eqn:phi3}
		       \phi(x) = \left\lbrace \begin{aligned}& 0 && \text{ for } x < 0, \\ &C_s e^{-\eta x} &&\text{ for } x \geq 0, \end{aligned} \right.
	    \end{equation} 
	    with $C_s> 0$ large enough and $\eta > 0$ small enough, there exists a non-negative super-solution $(\overline{\phi_E},\overline{\phi_F},\overline{\phi_M})$ of system (\ref{eqn:main2}) such that $\overline{\phi_E} \leq E^*, \overline{\phi_F} \leq F^*, \overline{\phi_M} \leq M^*$. Moreover, when $x \rightarrow +\infty$, $(\overline{\phi_E},\overline{\phi_F},\overline{\phi_M})$ converges to $(0,0,0)$.  
	\end{proposition}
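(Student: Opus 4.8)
The plan is to realize the super-solution as the componentwise minimum of two much simpler super-solutions of the cooperative system \eqref{eqn:main2}, using the fact that for a cooperative system the componentwise minimum of two super-solutions is again a super-solution (this is where the cooperativity of \eqref{eqn:main2}, stressed just above the statement, is essential). Before that I would linearise the reaction from above. Since $0\le\overline{\phi_E}\le K$ gives $1-\overline{\phi_E}/K\le 1$, and since on $\{x\ge 0\}$ the mating probability satisfies $\frac{\overline{\phi_M}}{\overline{\phi_M}+\gamma\phi}\le\alpha$ as soon as $C_s$ is large and $\eta$ small, it suffices to produce a super-solution of the \emph{linear} cooperative system obtained by replacing the factor $1-\overline{\phi_E}/K$ by $1$ and the mating probability by a constant $\alpha$. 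I would fix once and for all $\alpha\in(0,1/\mathcal R)$, a nonempty interval precisely because $\mathcal R>1$.

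For the first building block I look for a pure exponential $\Psi(x)=e^{-\lambda x}(V_E,V_F,V_M)$. The $E$- and $M$-lines determine $V_F,V_M$ from $V_E$ through $(c\lambda+\nu_E+\mu_E)V_E=\beta V_F$ and $(c\lambda-D\lambda^2+\mu_M)V_M=(1-r)\nu_E V_E$, and the $F$-line turns into its super-solution inequality exactly when
\[
(c\lambda-D\lambda^2+\mu_F)(c\lambda+\nu_E+\mu_E)\ \ge\ \alpha\,\beta r\nu_E .
\]
At $\lambda=0$ the left-hand side equals $\mu_F(\nu_E+\mu_E)=\tfrac1{\mathcal R}\beta r\nu_E>\alpha\beta r\nu_E$, so by continuity the inequality holds on an interval $(0,\lambda^*)$; for $\lambda$ small enough the eigenvector $V$ is moreover positive. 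For any such $\lambda$, $\Psi$ is a super-solution of \eqref{eqn:main2} on $\{x\ge 0\}$: the $E$- and $M$-lines hold with equality up to the slack coming from $1-\Psi_E/K\le1$, and the $F$-line holds thanks to the strict characteristic inequality together with $\frac{\Psi_M}{\Psi_M+\gamma\phi}\le\alpha$. To secure this last bound on all of $\{x\ge 0\}$, I would take $\eta\in(0,\lambda)$ so that the control $\phi$ of \eqref{eqn:phi3} decays strictly slower than $\Psi_M$, and then $C_s$ large.

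The second building block is the constant state $(E^*,F^*,M^*)$, which is itself a super-solution of \eqref{eqn:main2} on the whole line: its derivatives vanish and its reaction is $\le 0$, because $(E^*,F^*,M^*)$ given by \eqref{eqn:s2} solves \eqref{eqn:s1} while the mating probability is $\le 1$. I would then translate $\Psi$ so that its $F$-component equals $F^*$ at $x=0$ and, in every component, dominates the equilibrium for $x\le 0$; and set $\overline{\phi}_j=\min(j^*,\Psi_j)$ on $\{x\ge0\}$ and $\overline{\phi}_j=j^*$ on $\{x<0\}$, for $j\in\{E,F,M\}$. On $\{x\ge0\}$ this is the minimum of two super-solutions, hence a super-solution; on $\{x<0\}$ it is the equilibrium; the two pieces glue continuously, the only corners being downward, which are admissible (they produce favourable Dirac masses on the diffusive $F$- and $M$-lines, and a mere downward jump of the first derivative on the non-diffusive $E$-line, which is still compatible with $-c\,\overline{\phi_E}'\ge f_1$ read a.e.). By construction $\overline{\phi}\le(E^*,F^*,M^*)$ and $\overline{\phi}=\Psi\to(0,0,0)$ as $x\to+\infty$, which are the two asserted properties.

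I expect the genuine difficulty to lie not in any single inequality but in the bookkeeping of the intermediate zone near and beyond $x=0$, where some components have already started to decay along $\Psi$ while others are still saturated at their equilibrium value; one must guarantee that on no line does the reaction overtake the transport-diffusion there. The cooperative minimum principle buys exactly this for free, and the eigenvector relations make the switch-on of each component self-consistent (the $E$-line starts to decrease precisely when $\overline{\phi_F}$ has dropped to $V_F$). The remaining substantive work is therefore twofold: (i) establishing the admissible range $(0,\lambda^*)$ of the characteristic inequality, where $\mathcal R>1$ (equivalently $\alpha<1/\mathcal R$) is used; and (ii) choosing $\eta<\lambda$ and $C_s$ large so that $\frac{\overline{\phi_M}}{\overline{\phi_M}+\gamma\phi}\le\alpha$ holds uniformly on $\{x\ge0\}$, including the far field where $\overline{\phi_M}$ and $\phi$ are both small and their ratio is governed by the comparison $\lambda>\eta$.
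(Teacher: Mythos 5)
Your construction is correct and shares the paper's overall architecture --- an equilibrium plateau on the left glued to an exponentially decaying tail on the right, with the mating probability forced below a threshold $\alpha$ by taking the release decay rate $\eta$ smaller than the tail's decay rate $\lambda$ and $C_s$ large --- but your tail is built in a genuinely different way. The paper prescribes only $\overline{\phi_F}=F^*e^{-\lambda x}$ as a pure exponential, then solves the $E$-equation \emph{exactly} (keeping the nonlinearity) with that forcing, proves the bound $\widetilde{\phi_E}\leq C_E e^{-\lambda x}$ (Lemma \ref{lem:E}), then solves the linear $M$-equation with forcing $(1-r)\nu_E C_E e^{-\lambda x}$, computing $C_M$ in \eqref{eqn:CM}; its threshold is $\alpha<E^*/C_E$ and the admissible $\lambda$ is given by \eqref{eqn:lambda1}. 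You instead linearize the reaction from above, posit a single exponential vector $\Psi=e^{-\lambda x}(V_E,V_F,V_M)$ tied together by the $E$- and $M$-relations, reduce everything to the scalar inequality $(c\lambda-D\lambda^2+\mu_F)(c\lambda+\nu_E+\mu_E)\geq\alpha\beta r\nu_E$ (valid for small $\lambda$ exactly when $\alpha<1/\mathcal{R}$), and invoke the principle that the componentwise minimum of two super-solutions of a cooperative system is a super-solution. Your route avoids solving any ODE, dispenses with the analogue of Lemma \ref{lem:E}, and yields the cleaner threshold $\alpha<1/\mathcal{R}$; the paper's route yields fully explicit profiles and needs only elementary corner checks rather than the (viscosity-flavoured) minimum principle. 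Three details you should still nail down: (i) your normalization ``$\Psi_F(0)=F^*$ and every component dominates its equilibrium value on $\mathbb{R}_-$'' requires the ratio inequalities $V_E/V_F\geq E^*/F^*$ and $V_M/V_F\geq M^*/F^*$, which you never verify; they do hold, since $c\lambda+\nu_E+\mu_E<\nu_E+\mu_E$ and $c\lambda-D\lambda^2+\mu_M<\mu_M$ combined with the equilibrium ratios from \eqref{eqn:s2} --- or you can sidestep the issue by translating $\Psi$ until all crossing points $\{\Psi_j=j^*\}$ lie in $\mathbb{R}_+$, since translation shifts them all equally; (ii) on the non-diffusive $E$-line, an ``a.e.'' inequality is not by itself the right notion of super-solution for a first-order equation --- what makes the concave corner admissible is the sign $c<0$: the binding test-function slope there is the right derivative, where the inequality is inherited from the decaying piece (the paper's corner check at $x_E$ is the same observation); (iii) $\mathcal{R}>1$ is not what makes $(0,1/\mathcal{R})$ nonempty --- it is what makes the positive equilibrium \eqref{eqn:s2} exist, which your constant super-solution block silently uses.
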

	To use the comparison principle in Theorem \ref{thm:comparison} with Proposition \ref{prop:super2} above to proves Theorem \ref{thm:main}, one first need the following lemma 
	\begin{lemma}\label{lem:comparison}
	Consider function $\phi$ defined in \eqref{eqn:phi3} and the control $\Lambda$ defined in \eqref{eqn:phi2}. Then we can choose $A > C_s$ such that  if $M_s^0 \geq \phi$ in $\mathbb{R}$, the solution $M_s(t,x)$ of equation 
	\begin{equation}
	\left\lbrace \begin{aligned}
	   & \partial_t M_s - D\partial_{xx} M_s = \Lambda - \mu_s M_s, \\
	   &M_s(t=0) = M_s^0,
	\end{aligned} \right.
	    \label{eqn:sterile}
	\end{equation}
    satisfies $M_s(t,x) \geq \phi(x-ct)$ for any $t > 0$ and $x \in \mathbb{R}$. 
	\end{lemma}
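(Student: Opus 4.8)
The plan is to exhibit $\phi(x-ct)$ itself as a sub-solution of the linear equation \eqref{eqn:sterile} and then invoke the comparison principle. Since \eqref{eqn:sterile} is a \emph{linear} scalar parabolic equation (bounded source $\Lambda$, zeroth-order term $\mu_s$), its order-preserving property is classical and does not require Theorem~\ref{thm:comparison}. The hypothesis $M_s^0 \ge \phi = \phi(\cdot - c\cdot 0)$ supplies the initial ordering, so once $\underline{M_s}(t,x) := \phi(x-ct)$ is checked to be a sub-solution I obtain $M_s(t,x) \ge \phi(x-ct)$ for all $t>0$, $x\in\mathbb R$.

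First I would dispatch the two smooth regions. Because $\Lambda \ge 0$ and $M_s^0 \ge 0$, the maximum principle gives $M_s \ge 0$ everywhere; on $\{x-ct<0\}$ we have $\phi(x-ct)=0$, so the inequality is immediate. On $\{x-ct>0\}$, where $\phi(x-ct)=C_s e^{-\eta(x-ct)}=:w$, a direct substitution using $\partial_t w = c\eta w$, $\partial_{xx}w=\eta^2 w$ and $\Lambda=(A/C_s)\,w$ yields
\[
\partial_t w - D\partial_{xx}w + \mu_s w - \Lambda = \Big(c\eta - D\eta^2 + \mu_s - \tfrac{A}{C_s}\Big)\,w .
\]
Hence $w$ is a classical sub-solution on $\{x-ct>0\}$ precisely when $A \ge C_s\,(\mu_s + c\eta - D\eta^2)$. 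Since $c\le 0$ and $\eta>0$, the bracket is $\le \mu_s$, so any $A > C_s\max(1,\mu_s)$ simultaneously validates this inequality and guarantees $A>C_s$ as the statement requires; taking $A$ strictly larger makes the inequality strict, which is convenient below.

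The main obstacle is the moving front $\{x=ct\}$, across which $\phi(\cdot-ct)$ jumps \emph{upward} from $0$ to $C_s$. Because of this jump, $\underline{M_s}$ is not a classical — nor even a viscosity — sub-solution at the interface (the distributional Laplacian carries a dipole $-DC_s\delta_0'$ on the front), so the comparison principle cannot simply be quoted. I would close this gap with a first-crossing argument for $\psi := M_s - \phi(\cdot-ct)$, which is lower semicontinuous (continuous minus the upper-semicontinuous $\phi(\cdot-ct)$). If $\psi$ first vanishes and turns negative at some $t_*$, the \emph{strict} interior inequality above together with $\psi_{xx}\ge 0$, $\psi_t\le 0$ at an interior minimum rules out crossings inside $\{x>ct\}$, while $M_s\ge 0$ rules out $\{x<ct\}$; the only remaining possibility is a crossing \emph{at the front}, i.e. a failure of $M_s(t,ct)\ge C_s$. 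Thus the entire lemma reduces to a lower bound for the value of $M_s$ on the moving front.

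This front estimate is the delicate step, and the place where the size of $A$ and the smallness of $\eta$ are genuinely used. It is cleanest to pass to the moving frame $z=x-ct$, where $\tilde M(t,z):=M_s(t,z+ct)$ solves the \emph{autonomous} equation $\partial_t \tilde M = D\tilde M_{zz}+c\tilde M_z-\mu_s\tilde M+\Lambda(z)$ with $\Lambda(z)=Ae^{-\eta z}$ on $\{z>0\}$ time-independent, and the target becomes $\tilde M(t,z)\ge \phi(z)$. By monotonicity of the linear semigroup it suffices to treat the extremal datum $M_s^0=\phi$, after which I would bound $\tilde M(t,0)$ from below through the Duhamel/heat-kernel representation: the source, of amplitude $\sim A$ on a full neighbourhood ahead of the front, must be shown to compensate the instantaneous diffusive regularization of the initial jump and to keep the front value above $C_s$ once $A$ is taken large. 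I expect the small-time regime near the front, where the jump in $M_s^0$ is being smoothed, to be the genuinely delicate point of the whole argument; the interior and far-field behaviour are soft consequences of the sub-solution computation above.
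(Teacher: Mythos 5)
Your interior analysis is exactly the paper's: the published proof is also a first-crossing argument on $\{x>ct\}$, with the same algebra at a touching point (there $(\partial_t - D\partial_{xx})\big(M_s - C_s e^{-\eta(x-ct)}\big) = \Lambda - (\mu_s + c\eta - D\eta^2)\,C_s e^{-\eta(x-ct)}$) and essentially your condition on $A$ (the paper uses the sufficient condition $A > C_s(\eta^2+\mu_s)$). You are also right, and more careful than the paper, about where the difficulty sits: the paper takes the minimizer $x_1$ of $M_s(t_1,\cdot)-C_se^{-\eta(\cdot-ct_1)}$ over $\{x>ct_1\}$ and invokes the interior inequalities $\partial_t(\cdot)\le 0$ and $-D\partial_{xx}(\cdot)\le 0$ without ever excluding the possibility that the contact occurs at the front $x_1=ct_1$, where those inequalities are unavailable; your observation that $\phi(\cdot-ct)$ fails to be a distributional or viscosity sub-solution across the jump is correct.

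The gap is that the front estimate to which you reduce the whole lemma is never proved — your final paragraph is a plan (``I would bound\dots'', ``must be shown\dots'', ``I expect\dots''), not an argument — and in fact that estimate is \emph{false}, so the plan cannot be completed. By your own (correct) monotonicity reduction it suffices to treat $M_s^0=\phi$. For that datum, Duhamel's formula for \eqref{eqn:sterile} gives, at the front $x=ct$,
\begin{equation*}
M_s(t,ct)\;\le\; e^{-\mu_s t}\,C_s\int_{-\infty}^{ct}\frac{e^{-u^2/(4Dt)}}{\sqrt{4\pi Dt}}\,du\;+\;\int_0^t \|\Lambda\|_{L^\infty}\,ds\;\le\;\frac{C_s}{2}+At,
\end{equation*}
because $\phi\le C_s\mathbf{1}_{\mathbb{R}_+}$, $ct\le 0$, and $\Lambda\le A$. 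Hence $M_s(t,ct)<C_s=\phi(0)$ for all $0<t<C_s/(2A)$, no matter how large $A$ is: diffusion smooths the initial jump of height $C_s$ and produces an $O(C_s)$ deficit instantaneously, while the bounded source can only restore $O(At)$; the same bound holds at points $x = ct+O(\sqrt{Dt})$, so this is not an artefact of the single point $x=ct$. Thus the crossing at the front that you must exclude actually occurs, and no heat-kernel lower bound can prevent it. (This shows that the lemma as stated — hypothesis $M_s^0\ge\phi$ with the discontinuous $\phi$ of \eqref{eqn:phi3} — is itself flawed, and that the paper's proof has precisely the blind spot you identified.) The repair is to change the comparison profile rather than to estimate harder: let $\Theta$ be the bounded solution of $D\Theta''+c\Theta'-\mu_s\Theta+\Lambda(z)=0$ on $\mathbb{R}$, i.e. $\Theta(z)=\frac{A}{\mu_s+c\eta-D\eta^2}e^{-\eta z}+\kappa e^{\delta_- z}$ on $z\ge 0$ and $\Theta(z)=\kappa' e^{\delta_+ z}$ on $z<0$, where $\delta_\pm$ are the roots of $D\delta^2+c\delta-\mu_s=0$ and $\kappa,\kappa'$ are fixed by $C^1$ matching at $z=0$. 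For $\eta$ small and $A$ large one checks $\Theta\ge\phi$ pointwise; since $\Theta(x-ct)$ solves \eqref{eqn:sterile} exactly, the scalar comparison principle gives $M_s\ge\Theta(\cdot-ct)\ge\phi(\cdot-ct)$ — but only under the strengthened hypothesis $M_s^0\ge\Theta$ (in particular $M_s^0$ bounded below by a positive constant on a left neighbourhood of the front), which is how the lemma, and its use in the proof of Theorem \ref{thm:main}, should be restated.
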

	The proof of this Lemma is presented in Appendix \ref{app:lemcomparison}. 
	\begin{proof}[Proof of Theorem \ref{thm:main}]
	We define $(\overline{E}, \overline{F}, \overline{M})(t,x) = (\overline{\phi_E},\overline{\phi_F},\overline{\phi_M})(x-c't)$ where $c'<c < 0$, $(\overline{\phi_E}, \overline{\phi_F}, \overline{\phi_M})$ is defined in Proposition \ref{prop:super2} with a speed $c'$. It is clear that $(\overline{E}, \overline{F}, \overline{M})$ is a super-solution of system \eqref{eqn:fix} with $\psi(t,x) = \phi(x-ct)$ with $\phi$ defined in \eqref{eqn:phi3}. Denote $(E, F, M, Ms)$ solution of system \eqref{eqn:main} with $\Lambda$ defined in \eqref{eqn:phi2}, then $(E, F, M)$ is a sub-solution of system \eqref{eqn:fix} with $\psi\equiv M_s$. From Lemma \ref{lem:comparison}, we can choose $A > C_s$ such that $M_s(t,x) \geq \phi(x - ct)$ for any $t > 0$ and $x \in \mathbb{R}$. Moreover, by the construction of $(\overline{\phi_E},\overline{\phi_F},\overline{\phi_M})$ in Proposition \ref{prop:super2} (see Section \ref{sec:super2}), we have $(E^0, F^0, M^0)(x) \leq (\overline{E}, \overline{F}, \overline{M})(t = 0, x)$. Now, we apply the comparison principle in Theorem \ref{thm:comparison} and imply that $(E,F, M)(t,x)\leq (\overline{E}, \overline{F}, \overline{M})(t,x)$ for any time $t>0$ and $x\in \mathbb{R}$. Since $(\overline{\phi_E}, \overline{\phi_F}, \overline{\phi_M})(x) \rightarrow (0,0,0)$ when $x \rightarrow +\infty$ . The result follows.
	\end{proof}
	In the case of the system, we also go further and prove the existence of traveling wave solutions (i.e. solution of \eqref{eqn:main2})
	\begin{theorem}\label{thm:TW:system}
	 Assume that the basic offspring number $\mathcal{R} > 1$, then for any speed $c < 0$, there exists a control function $\phi$ as defined in \eqref{eqn:phi3} with $C_s> 0$ large enough and $\eta > 0$ small enough such that system (\ref{eqn:main2}) admits a solution $(\phi_E,\phi_F,\phi_M)$ with $0 \leq \phi_E \leq E^*, 0 \leq \phi_F \leq F^*, 0 \leq \phi_M \leq M^* $,  $(\phi_E,\phi_F,\phi_M) $ converges to $ (E^*,F^*,M^*)$ at $-\infty$, and $(0,0,0)$ at $+\infty$.	
	 \end{theorem}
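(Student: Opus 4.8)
The plan is to mirror the strategy used for the scalar traveling wave in Theorem~\ref{thm:front}: since system~\eqref{eqn:main2} with $\phi$ fixed is cooperative on the positive cone, I would produce the traveling wave by trapping it between an ordered pair of sub- and super-solutions and then running a monotone iteration. The super-solution $(\overline{\phi_E},\overline{\phi_F},\overline{\phi_M})$ is already supplied by Proposition~\ref{prop:super2}, together with the choice of $\phi$ in \eqref{eqn:phi3} with $C_s$ large and $\eta$ small; it satisfies $(\overline{\phi_E},\overline{\phi_F},\overline{\phi_M}) \le (E^*,F^*,M^*)$ and decays to $(0,0,0)$ at $+\infty$ (and, as in the scalar construction \eqref{eqn:super}, equals $(E^*,F^*,M^*)$ to the left of the release front). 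What remains is to build the matching sub-solution and to carry out the iteration on the whole line.

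First I would construct a non-negative sub-solution $(\underline{\phi_E},\underline{\phi_F},\underline{\phi_M})$ of \eqref{eqn:main2}, the vector analog of the scalar sub-solution of Proposition~\ref{prop:sub}. Near $+\infty$, where $\phi=C_s e^{-\eta x}$, I would use an exponential ansatz obtained by linearizing the controlled reaction at $(0,0,0)$: because the saturation factor $\tfrac{\phi_M}{\phi_M+\gamma\phi}$ suppresses the birth term, the effective reaction there pushes toward extinction for $c<0$, and one can find decaying profiles dominated by the super-solution. Near $-\infty$, where $\phi\equiv 0$ and the system reduces to the uncontrolled cooperative system \eqref{eqn:s1}, I would build the sub-solution so that it approaches $(E^*,F^*,M^*)$, using the eigenvector structure of the linearization at the positive equilibrium (as in the proof of Theorem~\ref{thm:natural}). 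Gluing these pieces (via a cutoff or a $\min$ with a small constant vector) yields a global sub-solution, and I would check, using the same monotonicity in $C_s,\eta$ exploited in Proposition~\ref{prop:super2}, that it lies below the super-solution componentwise.

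With the ordered pair in hand, I would set up a monotone iteration adapted to the partially degenerate structure. Adding a large constant $\lambda$ to make each reaction term monotone in its own unknown, I would update $\phi_F,\phi_M$ by solving two linear second-order (elliptic) ODEs with standard Schauder estimates, and update $\phi_E$ by solving the first-order transport ODE $-c\phi_E'+(\nu_E+\mu_E+\lambda)\phi_E = \text{(known right-hand side)}$ explicitly through its integrating factor, exploiting $c<0$. Starting from the super-solution, cooperativity and the comparison principle (in its cooperative version, cf.\ Theorem~\ref{thm:comparison}) make the iterates monotone decreasing and bounded below by the sub-solution, hence pointwise convergent. To handle the unbounded domain I would first solve the truncated problems on $[-n,n]$ with boundary data pinched between sub- and super-solutions, obtain uniform $C^{2,\alpha}_{\mathrm{loc}}$ bounds from elliptic regularity together with the explicit ODE representation for $\phi_E$, and extract a locally uniformly convergent subsequence as $n\to\infty$; the limit is a solution of \eqref{eqn:main2} with $0\le(\phi_E,\phi_F,\phi_M)\le(E^*,F^*,M^*)$.

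Finally I would read off the boundary behavior from the trapping pair. Both the sub- and super-solutions are built to decay to $(0,0,0)$ at $+\infty$ and to approach $(E^*,F^*,M^*)$ at $-\infty$, so the squeeze $\underline{\phi}\le(\phi_E,\phi_F,\phi_M)\le\overline{\phi}$ forces the required limits at both ends directly; should the sub-solution only stay bounded below by a positive vector near $-\infty$, I would fall back on a monotonicity/sliding argument together with the classification of the spatially homogeneous equilibria of \eqref{eqn:s1} to identify the left limit as $(E^*,F^*,M^*)$. The step I expect to be the genuine obstacle is the construction of the sub-solution, and especially its behavior at $-\infty$: unlike the scalar case one must fit three coupled components — one of them governed by the first-order, non-smoothing $E$-equation — to the eigendirections of the linearization at $(E^*,F^*,M^*)$ while remaining nonnegative and below the super-solution. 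Reconciling this with the degeneracy of the $E$-component, and ensuring that the pieces glue into a genuine global sub-solution, is the delicate point.
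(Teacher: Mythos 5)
Your overall architecture coincides with the paper's: the super-solution comes from Proposition \ref{prop:super2}, an ordered sub-solution is needed, and existence then follows from the monotone (sub-/super-solution) method for cooperative systems as in \cite{VOL}, with the limits at $\pm\infty$ read off the squeeze; your truncation/Schauder/transport-ODE details are a reasonable expansion of what the paper outsources to the references. But the paper's proof of Theorem \ref{thm:TW:system} is four lines precisely because all the content sits in Proposition \ref{prop:sub2}, the construction of the sub-solution --- and that is exactly the step you leave unresolved, and your sketch of it, as written, would fail. Near $-\infty$ you propose to use ``the eigenvector structure of the linearization at the positive equilibrium'' (note in passing that the proof of Theorem \ref{thm:natural} linearizes at $\mathbf{0}$, in Weinberger's framework, so it is not a template here). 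A true linearization leaves a quadratic remainder of the unfavorable sign: writing $\underline{\phi_E}=E^*-\epsilon_E$, $\underline{\phi_F}=F^*-\epsilon_F$, the only nonlinearity gives $f_1=(\text{linear part})-\tfrac{\beta}{K}\epsilon_E\epsilon_F$, so if $(\epsilon_E,\epsilon_F,\epsilon_M)$ solves the linearized system exactly, the sub-solution inequality for the $E$-equation reduces to $+\tfrac{\beta}{K}\epsilon_E\epsilon_F\le 0$, which is false; one would have to detune the exponent and absorb the remainder far to the left, which you do not address. The paper avoids this entirely by a different device: it freezes the logistic factor, solving the genuinely linear system \eqref{eqn:linear} with $\beta\hat F\bigl(1-E^*/K\bigr)$ in place of $\beta\hat F\bigl(1-\hat E/K\bigr)$. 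Since $\underline{\phi_E}\le E^*$ and $\hat F\ge 0$ on the region where it is used, any solution of this linear system automatically satisfies the nonlinear sub-solution inequality, with nothing left to absorb; the explicit solution $U_0+e^{\lambda_F^+x}U_F^++e^{\lambda_M^+x}U_M^+$ (plus a generalized-eigenvector case when $\lambda_F^+=\lambda_M^+$) is then tuned so that $\hat E(0)=\hat M(0)=0$ and $\hat F$ vanishes at some $y_F<0$.

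The gluing and the right tail are also problematic. The paper's sub-solution is identically zero to the right of these crossing points; the corners are admissible because the one-sided derivatives jump upward there, which is the correct inequality for a sub-solution, and the limit $(0,0,0)$ at $+\infty$ is supplied by the super-solution alone. Your recipe, ``a cutoff or a $\min$ with a small constant vector,'' fails twice over: a minimum of sub-solutions is not a sub-solution (minima preserve super-solutions; maxima preserve sub-solutions), and a small positive constant vector can never sit below the super-solution near $+\infty$, where the latter decays to zero, so the pair would not be ordered. Your positive exponential tail at $+\infty$ is salvageable --- any admissible tail must decay faster than $\phi$ (because it must stay below the super-solution, which decays faster than $\phi$), so the saturation factor indeed kills the birth term, and the sub-solution inequalities hold provided the decay rate $\kappa$ is large enough that $c\kappa-D\kappa^2+\mu_F$, $c\kappa-D\kappa^2+\mu_M$ and $c\kappa+\nu_E+\mu_E$ are all nonpositive --- but your justification (``the effective reaction pushes toward extinction, so one can find decaying profiles'') is backwards: a reaction pushing toward extinction favors super-solutions, and it is the drift and diffusion terms that must rescue the sub-solution inequality. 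The simplest correct choice is the paper's: take the sub-solution to vanish identically there.
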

	To prove such a result, we also establish, in Proposition \ref{prop:sub2}, the existence of a sub-solution with values below the super-solution. We construct such solutions and provide the proof for Theorem \ref{thm:TW:system} in Section \ref{subsec:subsolution:eqn}.
	
	\noindent {\bf Interpretation:} In Theorem \ref{thm:natural}, we obtain that without SIT control, there is a wave of mosquitoes that invades the whole domain, this natural traveling front moves with positive velocity ($c \geq \overline{c} > 0$). These dues to the \textit{hair-trigger} effects, that is, the zero equilibrium is unstable with respect to any non-zero initial data. To deal with this problem, while applying SIT, we try to stabilize $0$ by considering a release function $\Lambda$ decreasing exponentially at infinity so that the number of sterile mosquitoes released is finite. Then, we move this release to the opposite direction of the natural invasive waves which results in the negative velocity $c$ in (\ref{eqn:phi2}). We show in Theorem \ref{thm:main} that we succeed in pushing back the fronts by releasing a large enough amount of sterile mosquitoes.
	%%%%%%%%%%%%%%%%%%%%%%%%%%%%%%%%%%%%%%%%
    %%%%%%%%%%Section 4 %%%%%%%%%%%%%%%%%%%
    %%%%%%%%%%%%%%%%%%%%%%%%%%%%%%%%%%%%%%%
    \section{Study of the simplified model}\label{sec:simplified}
    In this section, we prove the existence of a super-solution, a sub-solution, and finally a solution to the traveling wave equation \eqref{eqn:front}. Each kind of solution is the subject of a subsection.  
	\subsection{Construction of a super-solution for the simplified model}
   We prove here Proposition \ref{prop:super} by constructing a super-solution for \eqref{eqn:front}. 
	 \begin{proof}[Proof of Proposition \ref{prop:super}]
	 For a constant $c < 0$, we study the following problem 
	 \begin{equation}
	 	\begin{cases}
	 		-c\overline{w}' - \overline{w}'' = \left(\dfrac{\alpha \beta }{\delta} - \mu \right) \overline{w} & \text{ on } [0,+\infty), \\
	 		\overline{w} > 0 \text{ on } [0,+\infty), \quad \overline{w}(+\infty) = 0.
	 	\end{cases}
 	\label{eqn:alpha}
	 \end{equation}
 	Consider the characteristic polynomial $r^2 + c r + \dfrac{\alpha \beta}{\delta} - \mu =0 $, since $ \dfrac{\alpha \beta}{\delta} - \mu < 0$ then for any $c < 0$, the polynomial admits two distinct roots $r_{\pm} = \dfrac{-c \pm \sqrt{c^2 - 4\left(\frac{\alpha \beta}{\delta} - \mu\right)}}{2}$ where $r_+ > 0$ and $r_- < 0$.  
 	
 	Since we look for a solution $w$ of (\ref{eqn:alpha}) with $w(+\infty) = 0$, then the solution of (\ref{eqn:alpha}) is
 	\begin{equation}
 		\overline{w}(x) = u_* e^{r(\alpha) x}  \qquad \text{ for } x > 0, 
 	\end{equation}
 	with $r(\alpha) = r_- = \dfrac{-c - \sqrt{c^2 - 4\left(\frac{\alpha \beta}{\delta} - \mu\right)}}{2} < 0$. 
 	
 	Now, remarking that Assumption \ref{ass1} provides $\frac{\delta \mu}{\beta} \leq 1$, it follows for any $\alpha \in (0,\frac{\delta \mu}{\beta})$ and any constant $\eta \in [0, -r(\alpha)]$ and $A \geq  \dfrac{u_*}{\alpha} - u_* > 0$, one defines function $\phi$ as in \eqref{eqn:phi}, then for all $x \in [0,\infty)$, one has $\dfrac{\overline{w}(x)}{\overline{w}(x) + \phi(x)} = \dfrac{u_* e^{r(\alpha)x}}{u_* e^{r(\alpha)x} + A e^{-\eta x}} = \dfrac{u_*}{u_* + A e^{-(\eta + r(\alpha))x}} \leq \alpha$. We deduce that
 	$$
 	-c \overline{w}' - \overline{w}'' - \dfrac{\overline{w}}{\overline{w} + \phi} \dfrac{\beta \overline{w}}{\frac{\beta \overline{w}}{K} + \delta} + \mu \overline{w} \geq -c \overline{w}' - \overline{w}'' - \left(\dfrac{\alpha \beta }{\delta} - \mu \right) \overline{w} = 0. 
 	$$ 
 	For any $x < 0$, one has $\overline{w}(x)= u_*$ and
 	$$
 	-c \overline{w}' - \overline{w}'' - \dfrac{\overline{w}}{\overline{w} + \phi} \dfrac{\beta \overline{w}}{\frac{\beta \overline{w}}{K} + \delta} + \mu \overline{w} = -\dfrac{\beta u_*}{\frac{\beta u_*}{K} + \delta} + \mu u_* = 0. 
 	$$
 	Moreover, we have $\displaystyle \lim_{x \rightarrow 0^-} \overline{w}'(x) = 0 > r(\alpha)u_* = \lim_{x \rightarrow 0^+} \overline{w}'(x)$. Hence, function $\overline{w}$ as in (\ref{eqn:super}) is a super-solution of (\ref{eqn:front}) with any $\phi$ of the form (\ref{eqn:phi}).
	 \end{proof}
	\subsection{Construction of a sub-solution for the simplified model}\label{subsec:subsolution:eqn}
	We are going to construct this sub-solution by part. In the part where $\phi \equiv 0$, we recall $f(s) = \dfrac{\beta s}{\frac{\beta s}{K} + \delta} - \mu s$ which corresponds to the reaction term of (\ref{eqn:front}) with $\phi \equiv 0$. Consider the following system
	\begin{equation}
	    \begin{cases}
	        -w'' = f(w) & \text{ in } \mathbb{R}_-, \\
	        w(0) = 0; \quad \displaystyle \lim_{x \rightarrow 0^-} w'(x) = -\sqrt{2 \int_0^{u_*} f(s) ds}.
	        \label{eqn:sub}
	    \end{cases}
	\end{equation}
	We have the following Lemma
	\begin{lemma}
	    System (\ref{eqn:sub}) admits a solution $w \geq 0$ such that for any $x < 0 \ w'(x) < 0$ and $\displaystyle \lim_{x \rightarrow -\infty}w(x) = u_*$.  
	\end{lemma}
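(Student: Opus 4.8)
The plan is to treat \eqref{eqn:sub} as an autonomous second-order ODE and to exploit its first integral (conservation of energy). Set $F(w) := \int_0^w f(s)\,ds$. Since $f$ is smooth, hence locally Lipschitz, on a neighbourhood of $[0,u_*]$, the Cauchy--Lipschitz theorem gives a unique maximal solution of the initial value problem $-w''=f(w)$, $w(0)=0$, $w'(0^-)=-\sqrt{2F(u_*)}$. Multiplying the equation by $w'$ and integrating yields $\tfrac12(w')^2+F(w)\equiv\text{const}$; evaluating at $x=0$, where $w=0$, $F(0)=0$ and $(w'(0))^2=2F(u_*)$, fixes the constant to be $F(u_*)$, so that
\[ (w'(x))^2 = 2\bigl(F(u_*)-F(w(x))\bigr) \]
along the whole trajectory. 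Note that the prescribed initial slope $-\sqrt{2F(u_*)}$ is precisely calibrated so that the energy of the orbit equals that of the rest point $(u_*,0)$; this is what forces the solution onto the heteroclinic connection to $u_*$.

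Recall from the discussion preceding \eqref{eqn:sub} that $f>0$ on $(0,u_*)$ and $f(u_*)=0$, so $F$ is strictly increasing on $[0,u_*]$ and $F(u_*)-F(w)>0$ for $w\in[0,u_*)$. Starting from $w(0)=0$ with $w'(0)<0$, the energy identity shows that $w'$ cannot vanish while $w$ remains in $[0,u_*)$, so by continuity $w'<0$ there; hence $w$ is strictly decreasing in $x$, i.e.\ strictly increasing as $x\to-\infty$, and it stays trapped in $[0,u_*)$. The identity also bounds $|w'|\le\sqrt{2F(u_*)}$, so the solution neither blows up nor leaves this range, and therefore extends to all of $\mathbb{R}_-$. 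In particular $w\ge0$ and $w'<0$ on $\mathbb{R}_-$, which are the first two assertions of the Lemma.

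It then remains to identify the limit at $-\infty$. Being monotone and bounded, $\ell:=\lim_{x\to-\infty}w(x)\in(0,u_*]$ exists, and the energy identity gives $w'(x)\to-\sqrt{2(F(u_*)-F(\ell))}$. If $\ell<u_*$, this limit is a strictly negative number $-m$, which forces $w'(x)\le -m/2$ for all sufficiently negative $x$, and integrating this bound yields $w(x)\to+\infty$ as $x\to-\infty$, contradicting $w<u_*$. Hence $\ell=u_*$, as claimed.

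I expect this last step, ruling out that the orbit stalls at an interior value $\ell<u_*$, to be the main (though mild) obstacle. An equivalent and fully explicit route is by quadrature: separating variables in the energy identity produces the decreasing map $w\mapsto -\int_0^w \frac{ds}{\sqrt{2(F(u_*)-F(s))}}$, whose integrand has no singularity at $s=0$ but diverges logarithmically as $s\to u_*$, since $F(u_*)-F(s)\sim\tfrac12|f'(u_*)|(u_*-s)^2$ near $u_*$ (using $f(u_*)=0$ and $f'(u_*)<0$, the latter following from the strict concavity $f''<0$ of $f$). This shows simultaneously that approaching $u_*$ requires infinite length, consistent with $x\to-\infty$, and that the map above is a bijection from $[0,u_*)$ onto $(-\infty,0]$, giving $w(x)$ explicitly and confirming $\lim_{x\to-\infty}w(x)=u_*$.
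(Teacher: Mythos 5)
Your proof is correct and follows essentially the same route as the paper's: both rest on the first integral $\tfrac12(w')^2+F(w)\equiv F(u_*)$ obtained by multiplying the equation by $w'$, use it to conclude $w'<0$ and $0\le w<u_*$ on $\mathbb{R}_-$, and then identify the limit at $-\infty$ by monotone convergence. If anything, your step ruling out a limit $\ell<u_*$ (via $w'\to-\sqrt{2(F(u_*)-F(\ell))}<0$ and integration) and your quadrature remark showing that reaching $u_*$ requires infinite length are spelled out more carefully than the paper's corresponding arguments, which handle the finite-time arrival at $u_*$ by uniqueness at the equilibrium and state the final limit identification only tersely.
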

	\begin{proof}
	    By Cauchy-Lipschitz theorem, problem (\ref{eqn:sub}) admits a solution $w \geq 0$ in $[-L_0,0)$ for some $L_0 \in (0,+\infty]$. Multiplying the first equation of (\ref{eqn:sub}) by $w'$ and integrating in $(-L,0)$ for some $L \in (0,L_0]$, we have
	    $$
	    -\displaystyle \int_{-L}^{0} \left[ \dfrac{(w')^2}{2} \right]' dx= \int_{-L}^{0} f(w) w' dx,
	    $$
	    then 
	    $$
	    \dfrac{w'(-L)^2}{2} - \dfrac{w'(0)^2}{2} = - \displaystyle \int_0^{w(-L)} f(s) ds.
	    $$
	    From (\ref{eqn:sub}), we have $w'(0)^2 = 2 \displaystyle \int_0^{u_*} f(s) ds$ then
	    \begin{equation}
	        \dfrac{w'(-L)^2}{2} = \displaystyle \int_{w(-L)}^{u_*} f(s) ds. 
	        \label{eqn:L}
	    \end{equation}
	    Since $f$ is monostable, then $w'(-L) = 0$ if and only if $w(-L) = u_*$. 
	   
	    Define 
	    \begin{equation}
	        \label{eqn:M0}
	        L := \inf\{ x > 0: w'(-x) = 0\} = \inf\{x > 0: w(-x) = u_* \}\leq +\infty.
	    \end{equation}
	    If $L < +\infty$, from the definition of $L$ one has $w'(-L) = 0$  and $w(-L) = u_*$. However, $u_*$ is a stable equilibrium of equation $-w'' =f (w)$, so $w(-L) = u_*$ implies that $w \equiv u_*$. This is contradictory to the fact that $w(0) = 0$.  
	   
	   Hence, $L = +\infty$. So we have $w'(x) < 0$ and $w(x) < u_*$ for any $x < 0$. We can deduce from this bound that $w$ converges when $x \rightarrow -\infty$. Since $\displaystyle \lim_{x \rightarrow -\infty} w(x) < w(0) = 0 $, then $w$ converges to $u_*$. 
	\end{proof}
	Now, we can use the solution $w$ of (\ref{eqn:sub}) to construct a sub-solution of (\ref{eqn:front}). 
	\begin{proposition}
	    \label{prop:sub}
	    For any $c < 0$, problem (\ref{eqn:front}) has a sub-solution $\underline{w}$ which is defined as follows
	    \begin{equation}
	        \underline{w}(x) = \begin{cases}
	                 w(x) & \text{ when } x < 0, \\
	                 0 & \text{ when } x \geq 0,
	        \end{cases}
	    \end{equation}
	with $\phi$ as in (\ref{eqn:phi}). 
	\end{proposition}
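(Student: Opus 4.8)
The plan is to verify the defining inequality of a sub-solution of \eqref{eqn:front}, namely
\[
-c\underline{w}' - \underline{w}'' \;\leq\; \frac{\underline{w}}{\underline{w}+\phi}\,\frac{\beta\underline{w}}{\frac{\beta\underline{w}}{K}+\delta} - \mu\underline{w},
\]
separately on the two open half-lines $\{x<0\}$ and $\{x>0\}$, and then to treat the single junction point $x=0$ through the jump of the one-sided derivatives. Throughout I would use only the sign information already in hand: $c<0$ by hypothesis and, from the preceding Lemma, $w'(x)<0$ for every $x<0$, together with the normalisation $\lim_{x\to 0^-}w'(x)=-\sqrt{2\int_0^{u_*}f(s)\,ds}<0$ imposed in \eqref{eqn:sub}.

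On $\{x<0\}$ we have $\phi\equiv 0$ and $\underline{w}=w$, so the right-hand side collapses to $\frac{\beta w}{\frac{\beta w}{K}+\delta}-\mu w = f(w)$, which by \eqref{eqn:sub} equals $-w''$. The required inequality therefore reduces to $-cw' - w''\leq -w''$, that is, to $-cw'\leq 0$; since $c<0$ and $w'<0$ the product $-cw'$ is negative, so this holds. On $\{x>0\}$ the function $\underline{w}$ vanishes identically, hence every term is zero and the inequality holds with equality. Thus both interior regions are settled by elementary sign checks, with no computation beyond recalling that $w$ solves the autonomous equation.

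The only genuine point is the gluing at $x=0$, where $\underline{w}$ is continuous (both one-sided limits are $0$) but its derivative jumps. I would interpret the sub-solution property in the distributional (equivalently, viscosity) sense and observe that, because $\underline{w}'$ has a jump there, the singular part of $-\underline{w}''$ at the origin is $-\bigl(\underline{w}'(0^+)-\underline{w}'(0^-)\bigr)\delta_0$. For $\underline{w}$ to be a sub-solution this atom must be non-positive, i.e.\ one needs $\underline{w}'(0^+)\geq \underline{w}'(0^-)$. Here $\underline{w}'(0^+)=0$ while $\underline{w}'(0^-)=-\sqrt{2\int_0^{u_*}f(s)\,ds}<0$, so the derivative jumps \emph{upward} and the condition is satisfied; this is exactly why the boundary slope in \eqref{eqn:sub} was prescribed to be negative. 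The main, and essentially only, obstacle is therefore making this junction argument rigorous---fixing the appropriate weak notion of sub-solution and confirming that the upward jump carries the correct sign---rather than any analytic difficulty, since the two interior inequalities are immediate.
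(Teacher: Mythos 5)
Your proof is correct and follows essentially the same route as the paper: verify the differential inequality on each open half-line (on $\{x<0\}$ it reduces to $-cw'\leq 0$, which holds since $c<0$ and $w'<0$; on $\{x>0\}$ everything vanishes) and then handle the junction at $x=0$ via the upward jump of the one-sided derivatives, $\underline{w}'(0^-)=-\sqrt{2\int_0^{u_*}f(s)\,ds}<0=\underline{w}'(0^+)$. Your explicit distributional justification of why the upward kink preserves the sub-solution property is a slight refinement of the paper, which merely records this derivative inequality (with a typo writing $\underline{w}$ where $\underline{w}'$ is meant) and leaves the weak-formulation argument implicit.
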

	\begin{proof}
	For any $c < 0$, for any $x < 0$, one has $\phi(x) = 0, \underline{w}(x) = w(x), \underline{w}'(x) < 0$, then
	$$
	-c \underline{w}' - \underline{w}'' - \dfrac{\underline{w}}{\underline{w} + \phi} \dfrac{\beta \underline{w}}{\frac{\beta \underline{w}}{K} + \delta} + \mu \underline{w} = -c \underline{w}' - \underline{w}'' - f(\underline{w}) = -c \underline{w}' < 0. 
 	$$
 	Moreover, $\displaystyle \lim_{x \rightarrow 0^-} \underline{w}(x) = -\sqrt{2 \int_0^{u_*} f(s) ds} < 0 = \lim_{x \rightarrow 0^+}\underline{w}(x) $. Hence, $\underline{w}$ is a sub-solution of (\ref{eqn:front}). 
	\end{proof}
	%%%%%%%%%%%%%%%%%%%
	\subsection{Conclusion: Construction of a traveling wave solution for the simplified model}
	We construct a solution from the above sub- and super-solutions. 
	\begin{proof}[Proof of Theorem \ref{thm:front}.]
	According to Propositions \ref{prop:super} and \ref{prop:sub}, for the control function $\phi$ as in (\ref{eqn:phi}), problem (\ref{eqn:front}) has the super-solution $\overline{w}$ as in (\ref{eqn:super}) and the sub-solution $\underline{w}$ as in (\ref{eqn:sub}). Moreover, the sub- and super-solutions are well-ordered : $\underline{w} \leq \overline{w}$ (see Figures \ref{fig:p1}). By applying the classical technique of sub- and super-solution (see e.g. \cite{SMO}), there exists a classical solution of (\ref{eqn:front}). Moreover, we have 
	$	\displaystyle \int_{\mathbb{R}} \phi(x) dx = C_s\int_0^{+\infty} e^{-\lambda x} dx = \dfrac{C_s}{\lambda} < +\infty.
	$
	\end{proof}
	\begin{figure}
 		\centering
 		\begin{subfigure}{0.45\textwidth}
 			\centering
 			\includegraphics[width = \textwidth]{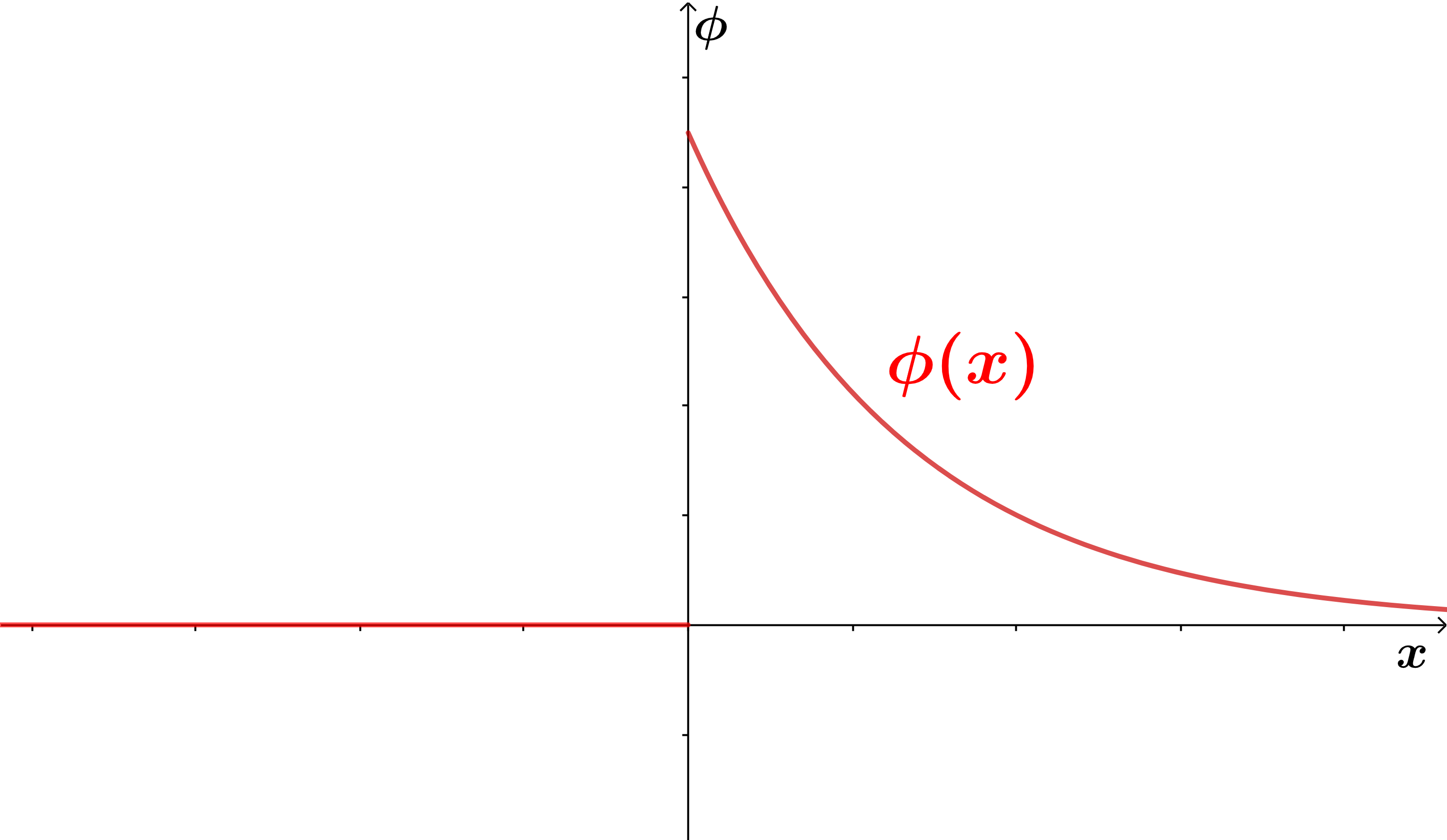} 
 			\label{fig:phi}
 		\end{subfigure}
 		\hfill
 		\begin{subfigure}{0.45\textwidth}
 			\centering
 			\includegraphics[width = \textwidth]{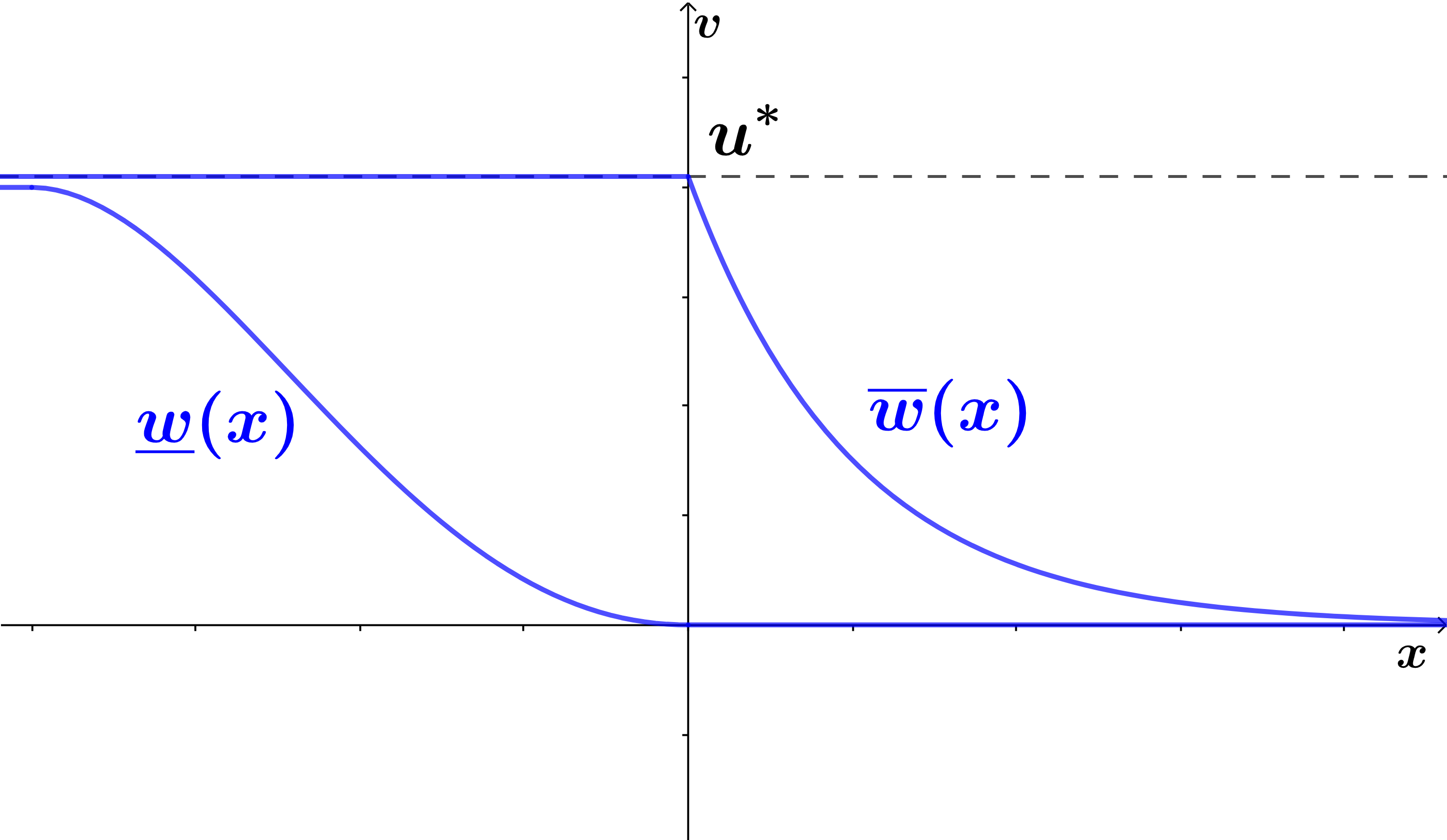} 
 			\label{fig:w}
 		\end{subfigure}
 		\caption{Control function $\phi$ and super-, sub-solutions. }
 		\label{fig:p1}
 	\end{figure} 
 	%%%%%%%%%%%%%%%%%%%%%%%%%%%%%%
    %%%%%Section 5%%%%
 	%%%%%%%%%%%%%%%%%%%%%%%%%%%%%%%
	\section{Construction of ``repulsing'' travelling waves for the system}\label{sec:TW:system}
	In this section, we construct a super-solution, a sub-solution and a solution to \eqref{eqn:main2}. Each kind of solution is the object of a subsection. 
	
	\subsection{Construction of a super-solution for the system} \label{sec:super2}
    Following the idea we used with the simplified model, we construct the super-solution of system \eqref{eqn:main2}. % as below
% 	\begin{proposition}
% 	\label{prop:super2}
% 	    For any speed $c < 0$ and consider a control function $\phi$ as defined in (\ref{eqn:phi2}) with $A> 0$ large enough and $\eta > 0$ small enough, then there exists a non-negative super-solution $(\overline{E},\overline{F},\overline{M},\phi)$ of system (\ref{eqn:main2}) such that $\overline{E} \leq E^*, \overline{F} \leq F^*, \overline{M} \leq M^*$. Moreover, when $x \rightarrow +\infty$, $(\overline{E},\overline{F},\overline{M})$ converges to $(0,0,0)$.  
% 	\end{proposition}
    %The idea of constructing the super-solutions is similar to what we did in the previous section with the toy-model. 
    We construct super-solutions establishing by two parts, a constant part on $(-\infty, x_*]$ and a tail on $(x_*,+\infty)$ that decays to $0$ at $+\infty$, with some $x_* \geq 0$. We start by considering $\overline{\phi_F}$ as follows
	\begin{equation}
	    \overline{\phi_F}(x) = \begin{cases}
	        F^* & \text{ when } x \leq  0, \\
	        F^* e^{-\lambda x} & \text{ when } x > 0, \\
	    \end{cases}
	    \label{eqn:Fsup}
	\end{equation}
	with some $\lambda > 0$. Next, we construct the tails for $\overline{\phi_E}$ and $\overline{\phi_M}$, and clarify the value of $x_*$. After that, we provide proof of Proposition \ref{prop:super2}. 
	
	\vspace{0.2cm}
	
	\noindent $\bullet$ \textbf{Construction of function $\overline{\phi_E}$:}
	First, on $\mathbb{R}_+$, we consider function $\widetilde{\phi_E}(x)$ such that
    \begin{equation}
    \begin{cases}
        -c\widetilde{\phi_E}'  =  \beta F^* e^{-\lambda x} \Big(1-\frac{\widetilde{\phi_E}}{K}\Big) - (\nu_E + \mu_E) \widetilde{\phi_E},\\
        \widetilde{\phi_E} > 0, \quad \displaystyle \lim_{x \rightarrow +\infty} \widetilde{\phi_E} = 0, \quad \widetilde{\phi_E}(0) = E^*. 
        \label{eqn:sysE}
    \end{cases}
    \end{equation}
	Hence, for any $x \geq 0$, we obtain $\widetilde{\phi_E}$ of the form
	\begin{equation}
	    \widetilde{\phi_E}(x) = e^{\delta(x)} \left( -\dfrac{\beta F^*}{c} \displaystyle \int_0^x e^{-\lambda s - \delta(s)} ds + E^* \right) > 0,
	    \label{eqn:E}
	\end{equation}
	where $\delta(x) = -\dfrac{\beta F^*}{\lambda c K} e^{-\lambda x} + \dfrac{\nu_E + \mu_E}{c} x + \dfrac{\beta F^*}{\lambda c K}$. One has $\delta(0) = 0$ and $\displaystyle \lim_{x \rightarrow +\infty} \delta(x) = -\infty$. We have the following lemma 
	\begin{lemma}
	\label{lem:E}
	     Assume that $\lambda + \dfrac{\nu_E + \mu_E}{c} < 0$, then there exists a constant $C_E > E^*$ such that $\widetilde{\phi_E}(x) \leq C_E e^{-\lambda x}$ for any $x \geq 0$. 
	\end{lemma}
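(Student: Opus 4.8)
The plan is to reduce the estimate to a one-dimensional linear comparison. Rather than wrestle with the explicit integral representation \eqref{eqn:E}, I would work directly from the ODE \eqref{eqn:sysE} and track the rescaled quantity $y(x) := \widetilde{\phi_E}(x)\,e^{\lambda x}$. The claim $\widetilde{\phi_E}(x) \leq C_E e^{-\lambda x}$ is exactly the statement that $y$ is bounded by $C_E$ on $\mathbb{R}_+$, so everything reduces to a boundedness estimate for $y$, which I expect to follow from a Gronwall-type argument once the differential inequality satisfied by $y$ is in the right form.

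First I would rewrite the ODE \eqref{eqn:sysE} as
\[
\widetilde{\phi_E}' = -\frac{\beta F^*}{c}\,e^{-\lambda x}\Big(1-\tfrac{\widetilde{\phi_E}}{K}\Big) + \frac{\nu_E+\mu_E}{c}\,\widetilde{\phi_E},
\]
and then differentiate $y = \widetilde{\phi_E}e^{\lambda x}$, using $y' = e^{\lambda x}\big(\widetilde{\phi_E}' + \lambda \widetilde{\phi_E}\big)$. Substituting the expression above, the factor $e^{-\lambda x}$ cancels against $e^{\lambda x}$ in the nonlinear term and one obtains
\[
y' = -\frac{\beta F^*}{c}\Big(1-\tfrac{\widetilde{\phi_E}}{K}\Big) + a\,y, \qquad a := \lambda + \frac{\nu_E+\mu_E}{c}.
\]
Here the sign bookkeeping is the crux. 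Since $c<0$, the constant $b:=-\beta F^*/c$ is strictly positive; since $\widetilde{\phi_E}>0$ we have $1-\widetilde{\phi_E}/K \leq 1$, so the first term is bounded above by $b$. This yields the scalar differential inequality $y' \leq b + a\,y$. The hypothesis $\lambda + (\nu_E+\mu_E)/c < 0$ is precisely what guarantees $a<0$, i.e. that the associated linear equation is \emph{stable} and relaxes to a finite equilibrium rather than growing.

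Next I would compare $y$ with the solution $z$ of the equality $z' = b + a\,z$, $z(0)=E^*=y(0)$, namely $z(x) = y_\infty + (E^*-y_\infty)e^{ax}$ with $y_\infty := -b/a = \beta F^*/(\nu_E+\mu_E - \lambda|c|) > 0$. Setting $w:=y-z$ gives $w'\leq a\,w$ with $w(0)=0$, so $(w e^{-ax})' \le 0$ and hence $w\leq 0$, i.e. $y\leq z$ on $\mathbb{R}_+$. Because $a<0$ and $x\geq 0$, one has $e^{ax}\leq 1$, whence $z(x)\leq \max(E^*, y_\infty)$ in both cases $E^*\gtrless y_\infty$. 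Taking $C_E := \max(E^*, y_\infty)$ (enlarged slightly if necessary so that $C_E>E^*$ as required) gives $y(x)\leq C_E$, that is $\widetilde{\phi_E}(x)\leq C_E e^{-\lambda x}$.

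I expect the only genuine obstacle to be the sign analysis: one must confirm that $c<0$ makes $b>0$, that dropping the competition term via $1-\widetilde{\phi_E}/K\leq 1$ preserves the correct inequality direction, and above all that the assumption forces $a<0$ so that the comparison equation has a bounded (convergent) solution. Once the inequality $y'\leq a y + b$ with $a<0$, $b>0$ is established, the boundedness of $y$ and thus the exponential bound on $\widetilde{\phi_E}$ are routine.
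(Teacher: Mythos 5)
Your proof is correct, but it follows a genuinely different route from the paper's. The paper works from the explicit variation-of-constants representation \eqref{eqn:E}: using $c<0$ and the hypothesis $\lambda + \frac{\nu_E+\mu_E}{c}<0$, it bounds the weight $e^{\delta(x)} \leq e^{\frac{\nu_E+\mu_E}{c}x} \leq e^{-\lambda x}$ and estimates the integral term explicitly, arriving at the concrete constant $C_E = E^* + \frac{\beta F^*}{c}\,\frac{e^{-\beta F^*/(c\lambda K)}}{\lambda + (\nu_E+\mu_E)/c} > E^*$. You instead bypass the explicit formula entirely: rescaling $y=\widetilde{\phi_E}e^{\lambda x}$ turns the claim into boundedness of $y$, the hypothesis becomes precisely the stability condition $a=\lambda+\frac{\nu_E+\mu_E}{c}<0$ for the linear comparison equation, and dropping $1-\widetilde{\phi_E}/K\leq 1$ (legitimate since $\widetilde{\phi_E}>0$ is part of the setup \eqref{eqn:sysE}) gives $y'\leq b+ay$ with $b=-\beta F^*/c>0$, from which Gronwall-type comparison yields $y\leq\max(E^*,-b/a)$. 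Both arguments turn on the same two sign facts, but yours avoids manipulating the double-exponential weight $e^{\pm\delta}$ and even produces a slightly sharper constant, $\max\left(E^*, -b/a\right)$ versus the paper's $E^* + (-b/a)\,e^{-\beta F^*/(c\lambda K)}$; the paper's estimate, on the other hand, costs nothing extra once \eqref{eqn:E} has been written down, and automatically gives the strict inequality $C_E>E^*$. On that last point, your closing remark about enlarging $\max(E^*,y_\infty)$ to a strict majorant of $E^*$ is not cosmetic and should be kept: the strict inequality $C_E>E^*$ is genuinely used downstream (to obtain $C_M \geq M^*$ in the construction of $\overline{\phi_M}$ and to choose $\alpha < E^*/C_E < 1$ in the proof of Proposition \ref{prop:super2}).
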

	\begin{proof}
    Since $\lambda + \dfrac{\nu_E + \mu_E}{c} < 0$ and $c < 0$, for any $x \geq 0$, we obtain that $\delta(x) \leq \dfrac{\nu_E + \mu_E}{c} x \leq -\lambda x$. Therefore, $e^{\delta(x)} \leq e^{\frac{\nu_E + \mu_E}{c} x} \leq e^{-\lambda x}$. On the other hand, one has 
	$$
	\displaystyle e^{\delta(x)} \int_0^x e^{-\lambda s - \delta(s)} ds \leq e^{\frac{\nu_E + \mu_E}{c} x}\int_0^x e^{-\lambda s - \frac{\nu_E+ \mu_E}{c}s} e^{-\frac{\beta F^*}{c \lambda K}(1-e^{-\lambda s})}ds \leq \dfrac{-e^{-\frac{\beta F^*}{c \lambda K}}}{\lambda + \frac{\nu_E + \mu_E}{c}} e^{-\lambda x}.
	$$
	Then one has $C_E := E^* + \dfrac{\beta F^*}{c} \dfrac{e^{-\frac{\beta F^*}{c \lambda K}}}{\lambda + \frac{\nu_E + \mu_E}{c}}  > E^*$. This induces the result of the lemma. 
	\end{proof}
	From Lemma \ref{lem:E}, we can deduce that $\displaystyle \lim_{x \rightarrow +\infty} \widetilde{\phi_E}(x) = 0$. Moreover, we define
	\begin{equation}
	    x_E:= \displaystyle \sup\{ x \geq 0: \widetilde{\phi_E}(x) = E^* \} < +\infty,
	\end{equation}
	and $\widetilde{\phi_E}(x) < E^*$ for any $x > x_E$. We define function $\overline{\phi_E}$ as follows
	\begin{equation}
	    \overline{\phi_E}(x) = \begin{cases}
	        E^* & \text{ when } x \leq x_E \\
	        \widetilde{\phi_E}(x) & \text{ when } x > x_E.
	    \end{cases}
	    \label{eqn:Esup}
	\end{equation}
	Then for any $x$, we have $\overline{\phi_E}(x) \leq \min\{E^*, C_E e^{-\lambda x}\}$, $\displaystyle \lim_{x \rightarrow +\infty} \overline{\phi_E}(x) = 0$, and $\displaystyle \lim_{x \rightarrow x_E^-}\overline{\phi_E}'(x) = 0 \geq \widetilde{\phi_E}'(x_E) =\lim_{x \rightarrow x_E^+}\overline{\phi_E}'(x) $.
	
	\vspace{0.2cm}
	
	\noindent $\bullet$ \textbf{Construction of function $\overline{\phi_M}$: }
	Next, on $\mathbb{R}_+$, we consider function $\widetilde{\phi_M}$ which satisfies 
	\begin{equation}
	    \begin{cases}
	        -c\widetilde{\phi_M}' - D \widetilde{\phi_M}'' = (1-r)\nu_E C_E e^{-\lambda x} -  \mu_M \widetilde{\phi_M},\\
	        \widetilde{\phi_M}(x) > 0, \quad \displaystyle \lim_{x \rightarrow +\infty} \widetilde{\phi_M}(x) = 0, \quad \widetilde{\phi_M}(0) = M^*.
	    \end{cases}
	    \label{eqn:sysM}
	\end{equation}
	Consider the characteristic polynomial $-D\delta^2 - c\delta + \mu_M = 0$ with two roots $\delta_\pm = \dfrac{-c \pm \sqrt{c^2 + 4D\mu_M}}{2D}$, where $\delta_+ > 0, \delta_- < 0$. Then solution of (\ref{eqn:sysM}) has the form $	\widetilde{\phi_M}(x) = C_M e^{-\lambda x} + C_1 e^{\delta_- x} + C_2 e^{\delta_+ x}$, where 
	\begin{equation}
	    C_M = \dfrac{(1-r)\nu_E C_E}{-D\lambda^2 + c\lambda + \mu_M}. 
	    \label{eqn:CM}
	\end{equation}
	Since $\displaystyle \lim_{x \rightarrow +\infty} \widetilde{\phi_M}(x) = 0$, then $C_2 = 0$. Moreover, $M^* = \widetilde{\phi_M}(0) = C_M + C_1$, thus $C_1 = M^* - C_M$. 
	
	Assume that $\lambda + \delta_- < 0$, so we have $\mu_M > -D\lambda^2 + c\lambda + \mu_M > 0$ and
	$$
	C_M > \dfrac{(1-r)\nu_E C_E}{\mu_M} = M^* \dfrac{C_E}{E^*} \geq M^*. 
	$$
	Moreover, since $\delta_- < -\lambda$, then for any $x > 0$, we have 
	$$
	C_M e^{-\lambda x} > \widetilde{\phi_M}(x) = C_M e^{-\lambda x} + (M^* - C_M) e^{\delta_- x} > M^* e^{\delta_- x} > 0.
	$$
	and we have $\displaystyle \lim_{x \rightarrow +\infty} \widetilde{\phi_M}(x) = 0$, so $\widetilde{\phi_M}$ is a solution of problem (\ref{eqn:sysM}). We define
	\begin{equation}
	    x_M = \displaystyle \sup \{ x \geq 0: \widetilde{\phi_M}(x) = M^* \} < +\infty, 
	\end{equation}
	and 
	\begin{equation}
	    \overline{\phi_M}(x) = \begin{cases}
	        M^* & \text{ when } x \leq x_M \\
	        \widetilde{\phi_M}(x) & \text{ when } x > x_M.
	    \end{cases}
	    \label{eqn:Msup}
	\end{equation}
	Again we have $\overline{\phi_M}(x) \leq \min\{M^*, C_M e^{-\lambda x}\}$ for any $x$, $\displaystyle \lim_{x \rightarrow +\infty} \overline{\phi_M}(x) = 0$, and $\displaystyle \lim_{x \rightarrow x_M^-}\overline{\phi_M}'(x) = 0 \geq \widetilde{\phi_M}'(x_M) =\lim_{x \rightarrow x_M^+}\overline{\phi_M}'(x) $.
	
	\vspace{0.2cm}
	
	Now we prove that for $A$ large enough, $(\overline{\phi_E},\overline{\phi_F},\overline{\phi_M})$ defined as above is a super-solution of (\ref{eqn:main2}). 
	
	\begin{proof}[Proof of Proposition \ref{prop:super2}] 
	Fix a positive parameter $\alpha$ such that $\alpha < \dfrac{\mu_F F^*}{r\nu_e C_E} = \dfrac{E^*}{C_E} < 1$. Then, we choose a positive constant $\lambda$ such that
	\begin{equation}
	    \lambda \leq \min \left\{-\dfrac{\nu_E + \mu_E}{c}, \dfrac{c + \sqrt{c^2 + 4D\mu_M}}{2D}, \dfrac{c + \sqrt{c^2 + 4D\mu_F\left( 1-\alpha \frac{C_E}{E^*} \right)}}{2D} \right\}.
	    \label{eqn:lambda1}
	\end{equation}
	Recalling $C_M$ defined respectively in \eqref{eqn:CM}, we take $\eta < \lambda$ and $C_s$ large enough such that $\dfrac{C_s}{C_M} \geq \dfrac{1}{\gamma}\left( \dfrac{1}{\alpha} - 1\right)$. Then for any $x > 0$, $\dfrac{\phi}{\phi_M} \geq \dfrac{C_s e^{-\eta x}}{C_M e^{-\lambda x} + (M^* - C_M)e^{\delta_- x}} \geq  \dfrac{C_s e^{-\eta x}}{C_M e^{-\lambda x}} \geq \dfrac{C_s}{C_M}$, thus we obtain that $   \dfrac{\phi_M}{\phi_M + \gamma \phi} = \dfrac{1}{1 + \gamma \frac{\phi}{\phi_M}} \leq \alpha. $
	 
	We now check the super-solution inequalities for $\overline{\phi_E}, \overline{\phi_F}, \overline{\phi_M}$. 
	
	\vspace{0.2cm}
	
	\noindent $\circ$ \textbf{Checking for $\overline{\phi_E}$:} For any $x \leq x_E$, since $\overline{\phi_E}(x) = E^*, \overline{\phi_F}(x) \leq F^*$, then 
	    $$-c\overline{\phi_E}' - \beta \overline{\phi_F}\left( 1-\dfrac{\overline{\phi_E}}{K} \right) + (\nu_E + \mu_E) \overline{\phi_E} \geq  -\beta F^*\left( 1-\dfrac{E^*}{K} \right) + (\nu_E + \mu_E) E^* = 0,
	    $$
	    and for $x > x_E > 0$, one has 
	    $$
	    -c\overline{\phi_E}' - \beta \overline{\phi_F}\left( 1-\dfrac{\overline{\phi_E}}{K} \right) + (\nu_E + \mu_E) \overline{\phi_E} = -c\widetilde{\phi_E}' - \beta F^* e^{-\lambda x}\left( 1-\dfrac{\widetilde{\phi_E}}{K} \right) + (\nu_E + \mu_E) \widetilde{\phi_E} = 0. 
	    $$
	    $\circ$ \textbf{Checking for $\overline{\phi_F}$: } For any $x \leq 0$, we have $\overline{\phi_F} = F^*, \overline{\phi_E} = E^*$, then
	    $$
	    -c\overline{\phi_F}' - D\overline{\phi_F}'' - r\nu_E \overline{\phi_E} \dfrac{\overline{\phi_M}}{\overline{\phi_M} + \gamma\phi} + \mu_F \overline{\phi_F} \geq -r\nu_E E^* + \mu_F F^* = 0. 
	    $$
	    For any $x > 0$, we have $\overline{\phi_E}(x) \leq C_E e^{-\lambda x}, \overline{\phi_F}(x) = F^*e^{-\lambda x}$, $\frac{\overline{\phi_M}}{\overline{\phi_M} + \gamma \phi} \leq \alpha$. 
	    
	    From \eqref{eqn:s2}, we note that $\dfrac{\mu_F F^*}{E^*} = r\nu_E$, thus
	    \[
	    -c\overline{\phi_F}' - D\overline{\phi_F}'' - r\nu_E \overline{\phi_E} \dfrac{\overline{\phi_M}}{\overline{\phi_M} + \gamma\phi} + \mu_F \overline{\phi_F} \geq F^*e^{-\lambda x} \left(-D\lambda^2 + c\lambda -\mu_F \alpha \dfrac{C_E}{E^*} + \mu_F \right) \geq 0
	    \]
	    since $0 < \lambda \leq \dfrac{c + \sqrt{c^2 + 4D\mu_F\left( 1-\alpha \frac{C_E}{E^*} \right)}}{2D}.$
	    
	    \vspace{0.2cm}
	    
	    \noindent $\circ$ \textbf{Checking for $\overline{\phi_M}$: } For any $x \leq  x_M$, one has $\overline{\phi_M}(x) = M^*, \overline{\phi_E}(x) \leq E^*$, thus 
	    $$
	    -c\overline{\phi_M}' - D\overline{\phi_M}''-(1-r)\nu_E \overline{\phi_E} + \mu_M \overline{\phi_M} \geq -(1-r)\nu_E E^* + \mu_M M^* = 0.
	    $$
	    On the other hand, when $x > x_M$, one has $\overline{\phi_E}(x) \leq C_E e^{-\lambda x}$, $\overline{\phi_M}(x) = \widetilde{\phi_M}(x)$ with $\widetilde{\phi_M}$ defined in \eqref{eqn:sysM} thus 
	    $$
	    -c\overline{\phi_M}' - D\overline{\phi_M}''-(1-r)\nu_E \overline{\phi_E} + \mu_M \overline{\phi_M} \geq -c\widetilde{\phi_M}' - D\widetilde{\phi_M}''-(1-r)\nu_E C_E e^{-\lambda x} + \mu_M \widetilde{\phi_M} = 0,
	    $$
	    since $\overline{\phi_E}(x) \leq C_E e^{-\lambda x}$.
	    
	    \vspace{0.2cm}
	    
	    In conclusion, for $\lambda > 0$ small such that (\ref{eqn:lambda1}) holds, $(\overline{\phi_E},\overline{\phi_F},\overline{\phi_M})$ defined as above is a super-solution of (\ref{eqn:main2}) where the control function $\phi$ defined in (\ref{eqn:phi2}) with $C_s$ large enough and $0 < \eta < \lambda$. 
	\end{proof}
	%%%%%%%%%%%%%%%%%%%%%%%%%%%%%%%%%%%%%%%%%%%
	\subsection{Construction of a sub-solution for the system}\label{subsection:subsolution:system}
	%\al{\textbf{Ask Nga if one can translate a bit all the sub-solution. $\qquad$}} 
	
	Using the same control function as in \eqref{eqn:phi2}, we construct a sub-solution $(\underline{\phi_E}, \underline{\phi_F}, \underline{\phi_M})$ for system \eqref{eqn:main2} establishing by two parts. $(\underline{\phi_E}, \underline{\phi_F}, \underline{\phi_M})$ is equal to $0$ on $[y_*, +\infty)$ and on $(-\infty, y_*)$ for some $y_*$, it converges to $(E^*,F^*,M^*)$ when $x \rightarrow -\infty$.   The construction of the sub-solution on $(-\infty,y_*) $ is the main difficulty in this section.	To cope with this problem, we use the fact that $\underline{\phi_E} \leq E^*$. We present the result of the existence of a sub-solution as follows
	\begin{proposition}
	\label{prop:sub2}
	    For a speed $c < 0$ and the control function $\phi$ defined in (\ref{eqn:phi3}), then there exists a non-negative sub-solution $(\underline{\phi_E},\underline{\phi_F},\underline{\phi_M})$ of system (\ref{eqn:main2}) such that $\underline{\phi_E} \leq E^*, \underline{\phi_F} \leq F^*, \underline{\phi_M} \leq M^*$. Moreover, when $x \rightarrow -\infty$, $(\underline{\phi_E},\underline{\phi_F},\underline{\phi_M})$ converges to $(E^*,F^*,M^*)$.  
	\end{proposition}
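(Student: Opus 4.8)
The plan is to mirror the scalar construction of Proposition \ref{prop:sub} by seeking a sub-solution proportional to the equilibrium vector. Concretely, I would set
\[
(\underline{\phi_E},\underline{\phi_F},\underline{\phi_M})(x) = (E^*,F^*,M^*)\,p(x),
\]
where $p:\mathbb{R}\to[0,1]$ is a single scalar profile with $p\equiv 0$ on $[y_*,+\infty)$, with $p$ decreasing and positive on $(-\infty,y_*)$, and with $p(-\infty)=1$. I would take $y_*\le 0$ (in fact $y_*=0$), which guarantees that on the support $(-\infty,y_*)$ of the sub-solution the control vanishes, $\phi\equiv 0$ by \eqref{eqn:phi3}, so that the mating fraction $\tfrac{\underline{\phi_M}}{\underline{\phi_M}+\gamma\phi}$ equals $1$ there; this is precisely what keeps the reaction terms manageable. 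The factorization by $(E^*,F^*,M^*)$ is natural because, by \eqref{eqn:s2}, the equilibrium obeys the linear identities $r\nu_E E^*=\mu_F F^*$ and $(1-r)\nu_E E^*=\mu_M M^*$, and because $F$ and $M$ diffuse with the same coefficient $D$.

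Next I would pin down $p$. On $(-\infty,y_*)$ both the $F$- and the $M$-sub-solution inequalities collapse, after inserting the two equilibrium identities, to the single requirement $-c\,p'-D\,p''\le 0$. I would therefore simply take $p$ to solve $-c\,p'-D\,p''=0$ with $p(-\infty)=1$ and $p(y_*)=0$, namely $p(x)=1-e^{-cx/D}$ on $(-\infty,0)$ with $y_*=0$ (recall $c<0$, so $-c>0$). This $p$ is decreasing, lies in $(0,1)$ on $(-\infty,0)$, and makes the $F$- and $M$-inequalities hold with equality.

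The $E$ equation is where the hypothesis $\underline{\phi_E}\le E^*$ is decisive. Since that equation carries no diffusion, its only content is the sign of the birth term, and a direct computation using $\beta F^*(1-E^*/K)=(\nu_E+\mu_E)E^*$ gives
\[
\beta\,\underline{\phi_F}\Big(1-\tfrac{\underline{\phi_E}}{K}\Big)-(\nu_E+\mu_E)\underline{\phi_E}=\tfrac{\beta F^*E^*}{K}\,p(1-p)\ge 0,
\]
precisely because $p\le 1$, i.e. $\underline{\phi_E}\le E^*$, while the left-hand side $-c\,\underline{\phi_E}'=-cE^*p'\le 0$ since $p'\le 0$ and $c<0$; hence the $E$-inequality holds with slack. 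I would then record the matching at $y_*=0$: the profile is continuous with a downward corner, $p'(0^-)=c/D<0=p'(0^+)$, so $-D\underline{\phi_F}''$ and $-D\underline{\phi_M}''$ acquire a negative Dirac mass at the corner, which is favourable for the sub-solution inequalities, whereas the first-order $E$ equation only sees the bounded derivative $p'\le 0$ and is unaffected. Finally, the bounds $\underline{\phi_E}\le E^*$, $\underline{\phi_F}\le F^*$, $\underline{\phi_M}\le M^*$ and the convergence to $(E^*,F^*,M^*)$ at $-\infty$ are immediate from $0\le p\le 1$ and $p(-\infty)=1$, while the ordering $\underline{\phi}\le\overline{\phi}$ against the super-solution of Proposition \ref{prop:super2} holds on $(-\infty,0)$ by $p\le 1$ and trivially on $[0,+\infty)$ where $\underline{\phi}=0$.

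The main obstacle I anticipate is the coupling through the non-diffusive $E$ equation together with its quadratic birth nonlinearity: unlike the scalar model one cannot reduce to a single reaction term, and a priori the three components could be forced to vanish at different points or to tend to different constants. The device that removes this is exactly the proportional ansatz, which is admissible only because the two diffusing equations share $D$ and because the relations in \eqref{eqn:s2} are linear in $E^*$; the residual nonlinear difficulty is then confined to the single sign computation above, where $p\le1$ is what saves it. Were one to proceed component-wise instead (solving a genuine standing profile for $F$ as in \eqref{eqn:sub}, recovering $E$ from the quasi-stationary relation and $M$ from a linear equation with a known positive source), the hard step would become proving positivity and monotonicity of $\underline{\phi_M}$ and aligning its vanishing point with that of $\underline{\phi_F}$; the proportional ansatz is precisely what I would use to bypass this.
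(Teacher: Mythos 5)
Your construction is correct, and it proves Proposition \ref{prop:sub2} by a genuinely different and more elementary route than the paper. The paper freezes the nonlinearity (replacing $1-\underline{\phi_E}/K$ by $1-E^*/K$), solves the resulting linear system \eqref{eqn:linear} exactly through a five-dimensional eigenvalue--eigenvector analysis, distinguishes the cases $\lambda_F^+\neq\lambda_M^+$ and $\lambda_F^+=\lambda_M^+$, and ends up with components vanishing at two different points ($\hat E,\hat M$ at $0$, $\hat F$ at some $y_F<0$), which forces additional sign checks on the intermediate interval $(y_F,0]$ where $\hat F<0$. Your proportional ansatz $(\underline{\phi_E},\underline{\phi_F},\underline{\phi_M})=(E^*,F^*,M^*)\,p$ with $p(x)=1-e^{-cx/D}$ on $\mathbb{R}_-$ and $p\equiv0$ on $\mathbb{R}_+$ avoids all of this: since $F$ and $M$ diffuse with the same constant $D$, the identities $r\nu_EE^*=\mu_FF^*$ and $(1-r)\nu_EE^*=\mu_MM^*$ from \eqref{eqn:s2} reduce both diffusive inequalities to $-cp'-Dp''\le0$ (achieved with equality), the quadratic term in the $E$ equation is handled by the sign of $p(1-p)\ge 0$ against the transport term $-cE^*p'\le 0$, and the corner at $0$ is a convex kink for all three components, which is the inequality-preserving direction (the same condition the paper itself verifies, with only an a.e.\ inequality needed for the non-diffusive $E$ equation). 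The resulting sub-solution also sits below the super-solution of Proposition \ref{prop:super2} and converges to $(E^*,F^*,M^*)$ at $-\infty$, so it serves equally well in the proof of Theorem \ref{thm:TW:system}. What your argument buys is brevity, fully explicit formulas, a single vanishing point, and no case analysis; what the paper's spectral construction buys is robustness, since it relies neither on the equality of the two diffusion coefficients nor on the proportional structure of the equilibrium identities (though even with $D_F\neq D_M$ your ansatz could be repaired by building $p$ from $\max(D_F,D_M)$, for which $-cp'-D_ip''\le0$ still holds for both diffusive components).
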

	\begin{proof}
	We consider $(\hat{E}, \hat{F}, \hat{M})$ a solution of the following linear system in $\mathbb{R}_-$
	\begin{equation}
	    \begin{cases}
	         -c\hat{E}'  =  \beta \hat{F} \Big(1-\frac{E^*}{K}\Big) - (\nu_E + \mu_E) \hat{E},  \\
			-c\hat{F}' - D\hat{F}'' =  r \nu_E \hat{E} - \mu_F \hat{F}, \\
			-c\hat{M}' - D\hat{M}'' =  (1-r)\nu_E \hat{E} - \mu_M \hat{M},
	    \end{cases}
	    \label{eqn:linear}
	\end{equation}
	with $\hat{E}(-\infty) = E^*, \hat{F}(-\infty) = F^*, \hat{M}(-\infty) = M^*$.
	
	Now, we will study this linear system by denote $U = \begin{pmatrix} \hat{E} \\ \hat{F} \\ \hat{M} \\ \hat{F}' \\ \hat{M}'
	\end{pmatrix}$, then system (\ref{eqn:linear}) becomes $U' = AU$ where 
	$
	A = \begin{pmatrix}
	\frac{\nu_E + \mu_E}{c} & -\frac{\mu_F(\nu_E + \mu_E)}{cr\nu_E} & 0 & 0 & 0 \\
	0 & 0 & 0 & 1 & 0 \\
	0 & 0 & 0 & 0 & 1 \\
	-\frac{r\nu_E}{D} & \frac{\mu_F}{D} & 0 & -\frac{c}{D} & 0 \\
	-\frac{(1-r)\nu_E}{D} &  0 & \frac{\mu_M}{D} & 0 & -\frac{c}{D}
	\end{pmatrix},$ since $1 - \dfrac{E^*}{K} = \dfrac{\mu_F(\nu_E + \mu_E)}{\beta r \nu_E}$. Then, the characteristic polynomial is 
	$$
	\det(A-\lambda I) = \lambda \underbrace{\left(\lambda^2 + \dfrac{c}{D} \lambda - \dfrac{\mu_M}{D} \right)}_{P_M(\lambda)} \underbrace{\left[ -\lambda^2 + \left( \dfrac{\nu_E + \mu_E}{c} - \dfrac{c}{D} \right) \lambda + \dfrac{\nu_E + \mu_E + \mu_F}{D}\right]}_{P_F(\lambda)}.  
	$$
    It is clear that $\lambda_0 = 0$ is an eigenvalue associated to the eigenvector $U_0 = \begin{pmatrix} E^* \\ F^* \\ M^* \\ 0 \\ 0 \end{pmatrix}$. Denote eigenvalues $\lambda_M^+ > 0, \lambda_M^- < 0$ which are the roots of $P_M(\lambda)$, $\lambda_F^+ > 0, \lambda_F^- < 0$ which are the roots of $P_F(\lambda)$. We aim at building a solution $U(x)$ that converges to $U_0$ when $x \rightarrow -\infty$, then we construct $U$ of the following form
    $$ U(x) =U_0 + e^{\lambda_M^+ x} U_M^+ + e^{\lambda_F^+ x} U_F^+,$$ 
    where $U_M^+, U_F^+$ the corresponding eigenvectors of $\lambda_M^+, \lambda_F^+$. We consider the following cases: 

    {\bf Case 1: $\lambda_M^+ \neq \lambda_F^+$: } 
    
    Since $\lambda_M^+$ is a root of $P_M(\lambda)$, then $U_M^+ = \begin{pmatrix} 0 \\ 0 \\ a \\ 0 \\ a\lambda_M^+ \end{pmatrix}$ for some $a\in \mathbb{R}$. Denote $U_F^+ = \begin{pmatrix} b_1 \\ b_2 \\ b_3 \\ b_4 \\ b_5 \end{pmatrix}$ an eigenvector associated to $\lambda_F^+$. We have $AU_F^+ = \lambda_F^+ U_F^+$, and since $\mathrm{rank}(A-\lambda_F^+I) = 4$ then all entries $b_2, b_3, b_4, b_5$ depend explicitly on $b_1 \in \mathbb{R}$. More precisely, using the formula of $E^*, F^*, M^*$ in (\ref{eqn:s2}), we have 
	\begin{equation}
	    b_2 = b_1 \dfrac{F^*}{E^*}\left( 1 - \dfrac{c\lambda_F^+}{\nu_E + \mu_E} \right), \quad b_3 = b_1 \dfrac{M^*}{E^*} \dfrac{\mu_M}{-D[(\lambda_F^+)^2 + \frac{c}{D}\lambda_F^+ - \frac{\mu_M}{D}]}, \quad b_4 = \lambda_F^+ b_2. 
	    \label{eqn:b1b2}
	\end{equation} 
	For any $x < 0$, we have
	$$
	\hat{E}(x) = E^* + b_1 e^{\lambda_F^+ x}, \quad \hat{F}(x) = F^* + b_2 e^{\lambda_F^+ x}, \quad \hat{M}(x) = M^* + a e^{\lambda_M^+ x} + b_3 e^{\lambda_F^+ x}. 
	$$
	We choose $b_1 =-E^*, a =-M^*-b_3$, then obtain that $\hat{E}(0) = 0, \hat{M}(0) = 0,  \hat{F}(0) = \dfrac{c F^* \lambda_F^+}{\nu_E + \mu_E} < 0$. Then, there exists a unique constant $y_F < 0$ such that $\hat{F}(y_F) = 0$. 
	
	\textbf{Claim: } For any $x < 0$, one has $\hat{E}(x) < E^*, \hat{F}(x) < F^*, \hat{M}(x) < M^*$.
	
	Indeed, since $b_1 < 0$, we deduce from (\ref{eqn:b1b2}) that $b_2 < 0$, then for any $x < 0$, $\hat{E}(x) < E^*, \hat{F}(x) < F^*$. It remains to show that $\hat{M}(x) < M^*$ for any $x < 0$. One has $$ \hat{M}(x) = M^*(1 - e^{\lambda_M^+x}) + b_3(e^{\lambda_F^+x} - e^{\lambda_M^+x}).$$ 
	We only need to show that $b_3(e^{\lambda_F^+x} - e^{\lambda_M^+x}) < 0$ for any $x < 0$. Indeed, 
	
	$\circ$ if $\lambda_F^+ < \lambda_M^+$, then $e^{\lambda_F^+x} - e^{\lambda_M^+x} > 0$ for any $x < 0$ and  $(\lambda_F^+)^2 + \frac{c}{D}\lambda_F^+ - \frac{\mu_M}{D} < 0$. From (\ref{eqn:b1b2}), we deduce that $b_3 < 0$;
	
	$\circ$ if $\lambda_F^+ > \lambda_M^+$, we have $e^{\lambda_F^+x} - e^{\lambda_M^+x} < 0$ for any $x < 0$ and $b_3 > 0$. 

	{\bf Case 2: $\lambda_F^+ = \lambda_M^+ = \lambda^+$: } Now $\lambda^+$ has one-dimensional eigenspace generated by $U^+ = \begin{pmatrix} 0 \\ 0 \\ a \\ 0 \\ a\lambda^+ \end{pmatrix}$ for some constant $a$. The solution of $U' = AU$ becomes $U(x) = U_0 + xe^{\lambda^+ x}U^+ + e^{\lambda^+ x}V^+$, with $V^+$ some vector to be determined. Plugging this $U(x)$ into the equation yields
    $$
    e^{\lambda^+ x} U^+ + \lambda^+ xe^{\lambda^+ x} U^+ + \lambda^+ e^{\lambda^+ x} V^+ = U'(x) = AU = \lambda^+ x e^{\lambda^+ x} U^+ + e^{\lambda^+ x} AV^+.
    $$
    Hence, $(A - \lambda^+ I)V^+ = U^+$. Denote $V^+ = \begin{pmatrix} b_1 \\ b_2 \\ b_3 \\ b_4 \\ b_5 \end{pmatrix}$, one has $    \begin{cases} b_2 = b_1 \dfrac{F^*}{E^*}\left( 1 - \dfrac{c\lambda^+}{\nu_E + \mu_E} \right), \\
        b_4 = \lambda^+ b_2, \\
        b_5 = \lambda^+ b_3 + a, \\
        a = -\dfrac{M^*}{E^*} \dfrac{1}{1 + \frac{2D\lambda^+}{\mu_M}} b_1. \\
    \end{cases}$
    
    Then, we have 
    $$
	\hat{E}(x) = E^* + b_1 e^{\lambda^+ x}, \quad \hat{F}(x) = F^* + b_2 e^{\lambda^+ x}, \quad \hat{M}(x) = M^* + ax e^{\lambda^+ x} + b_3 e^{\lambda^+ x}. 
	$$
    We choose $b_1 = -E^*, b_3 = -M^*$, then $\hat{E}(0) = 0, \hat{M}(0) = 0, \hat{F}(0) = \dfrac{c F^* \lambda^+}{\nu_E + \mu_E} < 0$. Thus, there exists a unique constant $y_F < 0$ such that $\hat{F}(y_F) = 0$.
    
    Since we have $a = \dfrac{M^*}{1 + \frac{2D\lambda^+}{\mu_M}} > 0$ , we obtain that for any $x < 0$, $\hat{E}(x) < E^*, \hat{F}(x) < F^*, \hat{M}(x) < M^*$.  
    
    Hence, in both cases, we constructed solution $(\hat{E}(x), \hat{F}(x), \hat{M}(x))$ of (\ref{eqn:linear}) such that $\hat{E}(x) < E^*, \hat{F}(x) < F^*, \hat{M}(x) < M^*$.  Moreover, $(\hat{E}, \hat{F},\hat{M})$ converges to $(E^*, F^*,M^*)$ at $-\infty$.  Now, we use these functions to construct a sub-solution for (\ref{eqn:main2}). 
    
    \noindent {\bf Construction of a sub-solution: }
    
    Now, we construct $\underline{\phi_E}, \underline{\phi_F}, \underline{\phi_M}$ as follows
    \begin{equation}
        \underline{\phi_E}(x) = \begin{cases}
	    \hat{E}(x) & \text{ when } x \leq 0, \\
	    0 & \text{ when } x > 0,
	\end{cases} \quad \underline{\phi_F}(x) = \begin{cases}
	    \hat{F}(x) & \text{ when } x \leq y_F, \\
	    0 & \text{ when } x > y_F,
	\end{cases} \quad \underline{\phi_M}(x) = \begin{cases}
	    \hat{M}(x) & \text{ when } x \leq 0, \\
	    0 & \text{ when } x > 0.
	\end{cases} \quad 
	\label{eqn:sub1}
    \end{equation}
    
    Note that, by the definitions of $\underline{\phi_M}$ and $\phi$, the fraction $\dfrac{\underline{\phi_M}}{\underline{\phi_M} + \phi}$ is well-defined in $\mathbb{R}$. We now check the sub-solution inequalities for $(\underline{\phi_E}, \underline{\phi_F}, \underline{\phi_M})$. We can see that for any $ x > 0$, the inequalities are trivial.

\vspace{0.2cm}

     $\circ$ \textbf{Checking for $\underline{\phi_E}(x)$: } For any $x \leq y_F < 0$, since $\underline{\phi_E} \leq E^*$, thus
     \[ 
      -c\underline{\phi_E}'  -  \beta \underline{\phi_F} \Big(1-\frac{\underline{\phi_E}}{K}\Big) + (\nu_E + \mu_E) \underline{\phi_E} \leq   -c\hat{E}' - \beta \hat{F} \Big(1-\frac{E^*}{K}\Big) - (\nu_E + \mu_E) \hat{E} = 0,
     \]
	For any $x$ such that $y_F < x \leq 0$, we have 
	\[
	-c\underline{\phi_E}'  -  \beta \underline{\phi_F} \Big(1-\frac{\underline{\phi_E}}{K}\Big) + (\nu_E + \mu_E) \underline{\phi_E} = -c \hat{E}' + (\nu_E + \mu_E) \hat{E}  = \beta \hat{F}\left( 1 - \frac{E^*}{K}\right) <0,
	\]
    since $\hat{F} < 0$ on $(y_F,0]$. 
    
    At $x = 0$, we also have $\displaystyle \lim_{x \rightarrow 0^-} \underline{\phi_E}(x) = \hat{E}(0) = -\lambda^+ E^* < 0 = \lim_{x \rightarrow 0^+}\underline{\phi_E}(x).$
    
    \vspace{0.2cm}
    
	$\circ$ \textbf{Checking for $\underline{\phi_F}(x)$: } For any $x \leq y_F <  0$, since $\phi = 0$, $\underline{\phi_M} = \hat{M} > 0$ on $\mathbb{R}_-$, we obtain that 
    \[
    -c \underline{\phi_F}' - D \underline{\phi_F}'' - r\nu_E \underline{\phi_E}\dfrac{\underline{\phi_M}}{\underline{\phi_M} + \gamma \phi} + \mu_F\underline{\phi_F} = -c \hat{F}' - D \hat{F}'' - r\nu_E \hat{E}- \mu_F\hat{F} = 0, 
    \]
    due to the fact that $(\hat{E}, \hat{F},\hat{M})$ is a solution of (\ref{eqn:linear}). 
    
    For $y_F < x \leq 0$, we have $\underline{\phi_F}(x) = 0$, $\underline{\phi_E}(x)= \hat{E}(x) \geq 0$, $\phi(x) = 0$,  $\underline{\phi_M}(x) = \hat{M}(x) > 0$, thus 
    \[
    -c \underline{\phi_F}' - D \underline{\phi_F}'' - r\nu_E \underline{\phi_E}\dfrac{\underline{\phi_M}}{\underline{\phi_M} + \gamma \phi} + \mu_F\underline{\phi_F}  = -r\nu_E \hat{E}\dfrac{\hat{M}}{\hat{M}+ \gamma \phi} \leq 0.
    \]
    At $x = y_F$, we have $\displaystyle \lim_{x \rightarrow y_F^-} \underline{\phi_F}(x) = \hat{F}(0) = -\lambda^+ F^*\left( 1 - \dfrac{c \lambda^+}{\nu_E + \mu_E} \right) < 0 = \lim_{x \rightarrow y_F^+}\underline{\phi_F}(x).$
    \vspace{0.2cm}
    
    $\circ$ \textbf{Checking for $\underline{\phi_M}(x)$: } For any $x \leq 0$, one has 
    \[
    -c\underline{\phi_M}' - D \underline{\phi_M}'' -(1-r)\nu_E \underline{\phi_E} + \mu_M \underline{\phi_M} = -c\hat{M}' - D \hat{M}'' - (1-r)\nu_E \hat{E} + \mu_M \hat{M} = 0. 
    \]
    Similarly, at $x = 0$, in both cases $\displaystyle \lim_{x \rightarrow 0^-} \underline{\phi_M}(x) = \hat{M}(0) < 0 = \lim_{x \rightarrow 0^+}\underline{\phi_M}(x).$
    
    Hence, $(\underline{\phi_E}, \underline{\phi_F}, \underline{\phi_M})$ is a sub-solution of (\ref{eqn:main2}).	\end{proof}
    %%%%%%%%%%%%%%%%%%%%%%%%%%%%%%%%%%%%%%%%%%%%%%%%%%%
	\subsection{Conclusion: Construction of the travelling wave solution for the system}
	We prove the existence of a travelling wave solution for system \eqref{eqn:main2} with negative speed $c$ as below
	\begin{proof}[Proof of Theorem \ref{thm:TW:system}]
	According to Propositions \ref{prop:super2} and \ref{prop:sub2}, for any speed $c < 0$, there exists a control function $\phi$ as defined in (\ref{eqn:phi3}) such that system (\ref{eqn:main2}) possesses a super-solution $(\overline{\phi_E},\overline{\phi_F}, \overline{\phi_M})$ and a sub-solution $(\underline{\phi_E},\underline{\phi_F}, \underline{\phi_M})$. Moreover, one has $(\overline{\phi_E},\overline{\phi_F}, \overline{\phi_M}) \geq (\underline{\phi_E},\underline{\phi_F}, \underline{\phi_M})$. 
	
	Since system (\ref{eqn:main2}) is cooperative, we can apply the comparison principle for cooperative system (see e.g. \cite{VOL}, Chapter 5, \S 5) to conclude that system (\ref{eqn:main2}) possesses a non-negative solution $(\phi_E,\phi_F,\phi_M,)$ such that $(\phi_E,\phi_F,\phi_M)$ converges to $(0,0,0)$ at $+\infty$ and to $(E^*,F^*,M^*)$ at $-\infty$.  
	\end{proof}
	%%%%%%%%%%%%%%%%%%%%% APPENDIX %%%%%%%%%%%%%%%%%%%%%%%%%%%
    \appendix
    
    \section{Proof of Theorem \ref{thm:comparison}}
    \label{app:comparison}
    In this section, we prove the comparison principle introduced in Theorem \ref{thm:comparison} for the main system \eqref{eqn:main}. 
    \begin{proof}
    Recall that system \eqref{eqn:fix} with $\psi(t,x)$ fixed is a cooperative system. Indeed, 
    \[
    \dfrac{\partial f_1}{\partial F} = \beta \left( 1 - \frac{E}{K} \right) > 0, \quad \dfrac{\partial f_1}{\partial M} = 0,
    \]
    \[
     \dfrac{\partial f_2}{\partial E} = r\nu_E \frac{M}{M + \gamma \psi} > 0, \quad \dfrac{\partial f_2}{\partial M} = \dfrac{\gamma \psi r\nu_E E}{(M + \gamma \psi)^2} > 0,
    \]
    and
    \[
    \dfrac{\partial f_3}{\partial E} = (1-r) \nu_E > 0, \quad \dfrac{\partial f_3}{\partial F} = 0.
    \]
    We have $U^1 = (E^1, F^1, M^1)$ is a sub-solution of system \eqref{eqn:fix} with $\psi \equiv M_s^1$. On the other hand, from the assumption of Theorem \ref{thm:comparison}, one has $0 \leq M_s^2(t,x) \leq M_s^1(t,x)$ for any $t > 0, x \in \mathbb{R}$, we deduce that $\mathbf{f}(U;M_s^1) \leq \mathbf{f}(U;M_s^2)$ for any $U \in \mathbb{R}^3_+$. Hence, 
     \[ 
    \partial_t U^1 - D\partial_{xx} U^1 - \mathbf{f}(U^1; M_s^2) \leq \partial_t U^1 - D \partial_{xx} U^1 = \mathbf{f}(U^1;M_s^1) \leq 0.
    \]
    This inequality deduces that $U^1$ is also a sub-solution of system \eqref{eqn:fix} with $\psi \equiv M_s^2$. From assumptions in Theorem \eqref{thm:comparison}, we also have $U^2 = (E^2, F^2, M^2)$ is a super-solution of this system.  Moreover, $U^1(t = 0) \leq U^2(t=0)$. Therefore, by applying the comparison principle for this cooperative system (see e.g. \cite{VOL}, Chapter 5, \S 5), we obtain that $(E^1, F^1, M^1)(t,x) \leq (E^2, F^2, M^2)(t,x)$ for any $t > 0, x \in \mathbb{R}$. 
    \end{proof}
    \section{Proof of Theorem \ref{thm:natural}}
    \label{app:A1}
We recall \cite[Theorem 4.2]{WEI} which shows the existence of traveling wave solutions for the monostable system of reaction-diffusion equations as follows. 
	
	Consider the system of reaction-diffusion equations $\partial_t u_i - d_i \partial_{xx} u_i = f_i(\mathbf{u})$, with $1 \leq i \leq k$ and denote $\mathbf{f} = (f_1, f_2,\dots f_k)$. The reaction function $\mathbf{f}$ needs to satisfy the following assumptions. 
	\begin{assumption}
		\normalfont
		\label{a1}
		\begin{itemize}
			\item[]
			\item[i. ] $\mathbf{f}(\mathbf{0}) = \mathbf{0}$ and there is a vector $\overline{\mathbf{u}} \gg \mathbf{0}$ such that $\mathbf{f}(\mathbf{\overline{\mathbf{u}}}) =0$ which is minimal in the sense there are no $\overline{\mathbf{v}}$ other than $\mathbf{0}$ and $\mathbf{\overline{\mathbf{u}}}$ such that $\mathbf{f}(\overline{\mathbf{v}}) = 0$ and $\mathbf{0} \ll \mathbf{\overline{\mathbf{v}}} \leq  \mathbf{\overline{\mathbf{u}}}$. 
			\item[ii. ] The system is cooperative, that is, $f_i(\mathbf{u}) $ is nondecreasing in all components of $\mathbf{u}$ with the possible exception of the $i^{th}$ one. 
			\item [iii. ] $\mathbf{f}(\mathbf{u})$ is continuous and piecewise continuously differentiable at $\mathbf{u}$ for $\mathbf{0} \leq \mathbf{u} \leq \overline{\mathbf{u}}$ and differentiable at $\mathbf{0}$.
			\item[iv. ] The Jacobian matrix $\mathbf{f}'(\mathbf{0})$ is in Frobenius form. The principal eigenvalue $\eta_1(\mathbf{0})$ of its upper left diagonal block is positive and strictly larger than the principal eigenvalues $\eta_\sigma(\mathbf{0})$ of its other diagonal blocks, and there is at least one nonzero entry to the left of each diagonal block other than the first one.   
		\end{itemize}
	\end{assumption}
	For any positive parameter $\mu$, if the initial data are of the form $e^{-\mu x} \mathbf{u}_0$ then the solution of this system has the form $e^{-\mu x} \mathbf{v}$, where the vector-valued function $\mathbf{v}$ is the solution of the system of ordinary differential equations with constant coefficients $\partial_t \mathbf{v} = C_{\mu} \mathbf{v}$, with $\mathbf{v}(\mathbf{0}) = \mathbf{u}_0$. The coefficient matrix is given by 
	\begin{equation}
		C_\mu = \text{diag}\Big( d_i \mu^2\Big) + \mathbf{f}'(\mathbf{0})n,
	\end{equation}
	and denote $\gamma_\sigma(\mathbf{0})$ the principal eigenvalue of the $\sigma$th diagonal block of the matrix $C_\mu$. We introduce the constant 
	\begin{equation}
		\overline{c} := \displaystyle \inf_{\mu > 0} \dfrac{\gamma_1(\mu)}{\mu}. 
		\label{eqn:c}
	\end{equation}
	Let $\overline{\mu} \in (0,\infty]$ again denote the value of $\mu$ at which this minimum is attained, and let $\zeta(\mu)$ be the eigenvector of $C_\mu$ which correspond to the eigenvalue $\gamma_1(\mu)$. Then, the following theorem presents the main result
	\begin{theorem}[Theorem 4.2 in \cite{WEI}]
		\label{thm:speed}
		Suppose that $\mathbf{f}$ satisfies the Assumptions \ref{a1}. Assume that either 
		
		\noindent (a) $\overline{\mu}$ is finite, 
		\begin{equation}
			\gamma_1(\overline{\mu}) > \gamma_\sigma(\overline{\mu}) \text{ for all } \sigma > 1, 
			\label{eqn:i1}
		\end{equation}
	and 
		\begin{equation}
			\mathbf{f}(\rho \zeta(\overline{\mu})) \leq \rho \mathbf{f}'(\mathbf{0}) \zeta(\overline{\mu}),
			\label{eqn:i2}
		\end{equation}
	for all positive $\rho$; 
	
	or 
	
	\noindent (b) There is a sequence $\mu_\nu \nearrow \overline{\mu}$ such that for each $\nu$ the inequalities (\ref{eqn:i1}) and (\ref{eqn:i2}) with $\overline{\mu}$ replaced by $\mu_\nu$ are valid. 
	
	Then the system has a unique speed $c^* = \overline{c}$ in the sense defined in \cite{WEI} and in particular, there exists a traveling wave solution corresponding to the speed $c^*$. 
	\end{theorem}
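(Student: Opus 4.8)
The plan is to recognize Theorem~\ref{thm:speed} as the linear-determinacy result underlying \cite{WEI} and to reconstruct its proof through the abstract theory of spreading speeds for order-preserving recursions. First I would pass from the continuous-time system to a discrete-time dynamical system by fixing a time step $T>0$ and letting $Q$ denote the time-$T$ solution map $\mathbf{u}(0,\cdot)\mapsto\mathbf{u}(T,\cdot)$. Cooperativity (Assumption~\ref{a1}(ii)) makes $Q$ order preserving; the autonomy of $\mathbf{f}$ makes it translation invariant; and parabolic smoothing supplies the continuity and compactness in the locally-uniform topology that the abstract theory requires. Assumption~\ref{a1}(i) identifies $\mathbf{0}$ and $\overline{\mathbf{u}}$ as the only relevant fixed points of $Q$, while Assumption~\ref{a1}(iv), through the positivity of the principal eigenvalue $\eta_1(\mathbf{0})$, forces $\mathbf{0}$ to be unstable and thereby guarantees the spreading (hair-trigger) behaviour that makes the spreading speeds meaningful.

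Second, I would invoke the abstract machinery to attach to $Q$ a slowest spreading speed $\underline{c}^*$ and a fastest spreading speed $\overline{c}^*$, defined through the large-$n$ asymptotics of $Q^n$ applied to front-like initial data and satisfying $\underline{c}^*\le\overline{c}^*$. The whole theorem then reduces to the two-sided identity $\underline{c}^*=\overline{c}^*=\overline{c}$, with $\overline{c}$ the explicit linear speed in \eqref{eqn:c}. The linear speed itself I would read off the linearization $\mathbf{f}'(\mathbf{0})$: the exponential ansatz $\mathbf{u}=e^{-\mu x}\zeta$ diagonalizes the linear flow into $\partial_t\mathbf{v}=C_\mu\mathbf{v}$, whose leading growth rate is the principal eigenvalue $\gamma_1(\mu)$ of the first Frobenius block, so the linearized front travels precisely at $\inf_{\mu>0}\gamma_1(\mu)/\mu=\overline{c}$ in the direction $\zeta(\overline{\mu})$.

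Third, for the upper bound $\overline{c}^*\le\overline{c}$ I would use the subhomogeneity hypothesis \eqref{eqn:i2}, $\mathbf{f}(\rho\zeta(\overline{\mu}))\le\rho\,\mathbf{f}'(\mathbf{0})\zeta(\overline{\mu})$, to verify that the exponentially decaying profile $\rho\,e^{-\overline{\mu}(x-ct)}\zeta(\overline{\mu})$ is a supersolution for every $c>\overline{c}$; monotonicity of $Q$ then traps the true solution beneath a front moving at speed $c$, so no mass can outrun $\overline{c}$ and hence $\overline{c}^*\le\overline{c}$. In case (b), where $\overline{\mu}$ is only approached and not attained, I would run the identical comparison along the sequence $\mu_\nu\nearrow\overline{\mu}$ and pass to the limit. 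This is the routine half of the argument, driven entirely by the linear upper barrier.

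Finally — and this is where I expect the genuine difficulty — I would prove the matching lower bound $\underline{c}^*\ge\overline{c}$, which is the actual content of linear determinacy. The obstacle is that monotonicity hands out upper bounds for free, whereas a lower bound demands building compactly-supported subsolutions that propagate at speeds arbitrarily close to $\overline{c}$ without being slowed by the nonlinear coupling between components. Here the dominance condition \eqref{eqn:i1}, $\gamma_1(\overline{\mu})>\gamma_\sigma(\overline{\mu})$ for all $\sigma>1$, is indispensable: it ensures the leading block strictly outgrows the others, so the subsolution can be concentrated along the single dominant direction $\zeta(\overline{\mu})$ and the lower-order blocks cannot drag the front back. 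Combining this construction with the abstract lower-bound lemma yields $\underline{c}^*\ge\overline{c}$, whence $c^*:=\underline{c}^*=\overline{c}^*=\overline{c}$. With the spreading speed pinned down, I would then extract the travelling wave at the critical speed from the monotone-limit construction of the abstract theory: iterate $Q$ on a suitable front-like profile, use translation invariance together with the compactness of $Q$ to produce a limiting profile, and verify that it is a monotone wave connecting $\overline{\mathbf{u}}$ at $-\infty$ to $\mathbf{0}$ at $+\infty$ with speed exactly $c^*=\overline{c}$.
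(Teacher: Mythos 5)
This statement is quoted verbatim from the reference \cite{WEI} (Theorem 4.2 there); the paper itself offers no proof of it, only an application of it in Appendix B, so there is no internal argument to compare against. Judged as a reconstruction of the cited source's proof, your outline is broadly faithful: the Weinberger--Lewis--Li theory does proceed through the time-$T$ solution map viewed as an order-preserving, translation-invariant recursion, with the slowest and fastest spreading speeds sandwiched between $\overline{c}$ from below and above, the subhomogeneity condition \eqref{eqn:i2} furnishing the exponential supersolution $\rho e^{-\overline{\mu}(x-ct)}\zeta(\overline{\mu})$ for the upper bound, and case (b) handled by running the argument along $\mu_\nu \nearrow \overline{\mu}$. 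Your allocation of the hypotheses is also essentially right, though the mechanism by which \eqref{eqn:i1} enters deserves sharpening: its role is to guarantee that the eigenvector $\zeta(\overline{\mu})$ of $C_{\overline{\mu}}$ can be chosen with \emph{all} components strictly positive (exactly the point checked in the paper's Appendix B, where $m>0$ iff $\gamma_1(\overline{\mu})>\gamma_2(\overline{\mu})$); without full positivity neither the supersolution nor the lower-bound comparison is valid in every component, which is a more precise statement than ``the lower-order blocks cannot drag the front back.''

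The one concrete gap is your appeal to ``parabolic smoothing'' to supply compactness of $Q$ in the locally uniform topology. The theorem as stated allows degenerate diffusion coefficients $d_i=0$ --- and indeed the whole point of its use in this paper is the partially degenerate system \eqref{eqn:s1}, whose $E$-equation has no diffusion --- so the time-$T$ map is \emph{not} compact, and Weinberger's original compactness-based recursion framework does not apply off the shelf. The cited literature deals with this precisely by replacing compactness with arguments tailored to monotone systems (and the companion existence result for travelling waves at $c\geq c^*$ is likewise proved by a monotone iteration that does not invoke compactness of $Q$). Your final step, extracting the critical wave ``using the compactness of $Q$,'' inherits the same flaw. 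As a blind reconstruction this is a reasonable sketch of the right machinery, but the degenerate-diffusion case is not a technical footnote here: it is the case the paper actually needs.
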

	Now we apply this theorem to system (\ref{eqn:s1}) with $\mathbf{f}(E, F ,M) = \begin{pmatrix}
		\beta F \Big(1-\frac{E}{K}\Big) - (\nu_E + \mu_E) E \\
		r \nu_E E - \mu_F F \\
		(1-r)\nu_E E - \mu_M M
	\end{pmatrix} $, and we provide the proof of Theorem \ref{thm:natural} as follows. 
	\begin{proof}[Proof of Theorem \ref{thm:natural}.]
	First, we need to show that $\mathbf{f}$ satisfies Assumptions \ref{a1}. With $\beta r \nu_E > \mu_F (\nu_E + \mu_E)$, we can deduce that $\mathbf{f}$ has two zeros $(0,0,0)$, $(E^*,F^*,M^*)$, and satisfies (i). When $E \leq K$, one has $\mathbf{f}$ is cooperative, thus $\mathbf{f}$ satisfies (ii). It is easy to see that $\mathbf{f}$ satisfies (iii). Now we only need to check the assumption (iv). The Jacobian of $\mathbf{f}$ at $(0,0,0)$ 
	\begin{equation}
		\mathbf{f}'(\mathbf{0}) = \begin{pmatrix}
			-\nu_E - \mu_E & \beta & 0 \\
			r\nu_E & -\mu_F & 0 \\
			(1-r) \nu_E & 0 & -\mu_M   
		\end{pmatrix}
	\end{equation}
	is in Frobenius form with two diagonal blocks $B_1 = \begin{pmatrix}
		-\nu_E - \mu_E & \beta\\
		r\nu_E & -\mu_F 
	\end{pmatrix}$ and $B_2 = -\mu_M$. There is a positive entry $(1-r)\nu_E$ to the left of $B_2$. 

	The block $B_1$ has two eigenvalues $\eta_{\pm} = \dfrac{-(\nu_E +  \mu_E + \mu_F) \pm \sqrt{(\nu_E + \mu_E - \mu_F)^2 + 4\beta r \nu_E}}{2}$. Denote $\begin{pmatrix} e_\pm \\ f_\pm \end{pmatrix}$ the eigenvectors corresponding to eigenvalues $\eta_{\pm}$ of $B_1$. Then, one has 
	$$
	-(\nu_E + \mu_E) e_\pm + \beta f_\pm = \dfrac{-(\nu_E +  \mu_E + \mu_F) \pm \sqrt{(\nu_E + \mu_E - \mu_F)^2 + 4\beta r \nu_E}}{2} e_\pm.
	$$
	So
	$$
	\beta f_\pm = \dfrac{\nu_E +  \mu_E - \mu_F \pm \sqrt{(\nu_E + \mu_E - \mu_F)^2 + 4\beta r \nu_E}}{2} e_\pm.
	$$
	Since $\dfrac{\nu_E +  \mu_E - \mu_F - \sqrt{(\nu_E + \mu_E - \mu_F)^2 + 4\beta r \nu_E}}{2}  < 0$, then $e_-$ and $f_-$ always have different signs. Hence, $\eta_+$ is the only eigenvalue that has the corresponding positive eigenvector, and it is the principal eigenvalue of $B_1$. Moreover, due to the assumption $\beta r \nu_E > \mu_F(\nu_E + \mu_E)$, one has $\eta_1(\mathbf{0}) = \eta_+ > 0 > -\mu_M = \eta_2(\mathbf{0})$. This concludes that $\mathbf{f}$ satisfies (iv).  
	
	Now, one has the matrix 
	$$
	C_\mu = \begin{pmatrix}
		-\nu_E - \mu_E & \beta & 0 \\
		r\nu_E & D\mu^2-\mu_F & 0 \\
		(1-r) \nu_E & 0 & D\mu^2-\mu_M   
	\end{pmatrix}. 
	$$
	Similarly to the matrix $\mathbf{f}'(\mathbf{0})$, the principal eigenvalue of the first block of $C_\mu$ is 
	$$
	\gamma_1(\mu) = \dfrac{D\mu^2 - \nu_E -  \mu_E - \mu_F + \sqrt{(D\mu^2 + \nu_E + \mu_E - \mu_F)^2 + 4\beta r \nu_E}}{2}
	$$
	By the assumption $\beta r \nu_E > \mu_F (\nu_E + \mu_E)$ and $D > 0$, we have $\gamma_1(\mu) > 0$. It is easy to see that $\dfrac{\gamma_1(\mu)}{\mu} \sim \dfrac{1}{\mu}$ when $\mu \rightarrow 0^+$, and $\dfrac{\gamma_1(\mu)}{\mu} \sim \mu$ when $\mu \rightarrow +\infty$. Hence, one can deduce that there exists a finite constant $\overline{\mu} \in (0,+\infty)$ such that $\dfrac{\gamma_1(\overline{\mu})}{\overline{\mu}} = \displaystyle \inf_{\mu > 0} \dfrac{\gamma_1(\mu)}{\mu}$.
	
	Consider $\zeta(\overline{\mu}) = \begin{pmatrix}
		e \\ f \\ m
	\end{pmatrix}$ the eigenvector corresponding to the eigenvalue $\gamma_1(\overline{\mu})$ of $C_{\overline{\mu}}$, where $\begin{pmatrix}
	e \\ f \end{pmatrix}$ is the positive eigenvector associated to the principal eigenvalue $\gamma_1(\overline{\mu})$ of the first diagonal block. So $m >0$ if and only if $\gamma_1(\overline{\mu}) > \gamma_2(\overline{\mu}) = D\overline{\mu}^2 - \mu_M$, that is
	\begin{equation}
		\label{eqn:cond}
		2\mu_M - D\overline{\mu}^2 - \nu_E -  \mu_E - \mu_F + \sqrt{(D\overline{\mu}^2 + \nu_E + \mu_E - \mu_F)^2 + 4\beta r \nu_E} > 0. 
	\end{equation} 
	Hence, whenever the parameters satisfy condition (\ref{eqn:cond}), the inequality (\ref{eqn:i1}) holds, the eigenvector $\zeta (\overline{\mu}) = \begin{pmatrix}
		e \\ f \\ m
	\end{pmatrix}$ is positive, and for any positive $\rho$, 
	$
	\mathbf{f}(\rho \zeta(\overline{\mu})) - \rho \mathbf{f}'(\mathbf{0}) \zeta(\overline{\mu}) = \rho \begin{pmatrix}
		-\frac{\beta}{K} ef \rho \\ 0 \\0
	\end{pmatrix} < 0, 
	$
	then (\ref{eqn:i2}) holds. Now, applying the result of Theorem \ref{thm:speed}, we obtain the minimal speed of the traveling wave problem. Hence, we obtain the result of Theorem \ref{thm:natural}.
	\end{proof}  
	
	\section{Proof of Lemma \ref{lem:comparison}}
    \label{app:lemcomparison}
    \begin{proof}  
    According to the initial data, it is clear that $M_s(t=0, x) > C_s e^{-\eta x}$. Assume by contradiction that there exists a time $t$ such that $M_s(t, x) < C_s e^{-\eta x}$. We introduce $t_1 = \inf \lbrace t > 0 \ : \ \underset{ x >ct}{\min}\  M_s(t,x)  -C_s e^{-\eta( x - ct)} = 0 \rbrace$. Our hypothesis leads to $t_1 < +\infty$. Let $x_1$ be the antecedent of this minimum. It is clear that
    \begin{equation}\label{AppC_inq_time}
    \partial_t \left( M_s(t_1,x_1)  -C_s e^{-\eta (x_1 - ct_1)} \right) \leq 0.
    \end{equation}
    We also have it a minimum with respect to the space, it follows
    \begin{equation}\label{AppC_inq_space}
    -D\partial_{xx} \left(M_s(t_1,x_1)  -C_s e^{-\eta( x_1 - ct_1)} \right) \leq 0.
    \end{equation}
    In the other hand, we have for $A >  C_s(\eta^2 + \mu_s) $
    \[\big(\partial_t  - D \partial_{xx} \big)\big(M_s(t_1,x_1)  -C_s e^{-\eta( x_1 - ct_1)} \big) = \Lambda (t,x) - C_s(\eta^2 + \mu_s) e^{-\eta(x_1-ct_1)} > 0. \]
    It is in contradiction with \eqref{AppC_inq_time} and \eqref{AppC_inq_space}. It follows $t_1 = +\infty$ and $M_s(t,x) > C_s e^{-\eta (x-ct)}$ .
    \end{proof}
	\section*{Acknowledgement}
	Both authors want to sincerely thank Luis Almeida and Nicolas Vauchelet for all the fruitful discussions and their precious advice.
	\vskip 0.2 cm
	\noindent \begin{minipage}{0.15\textwidth}
	        \centering
            \includegraphics[width = \textwidth]{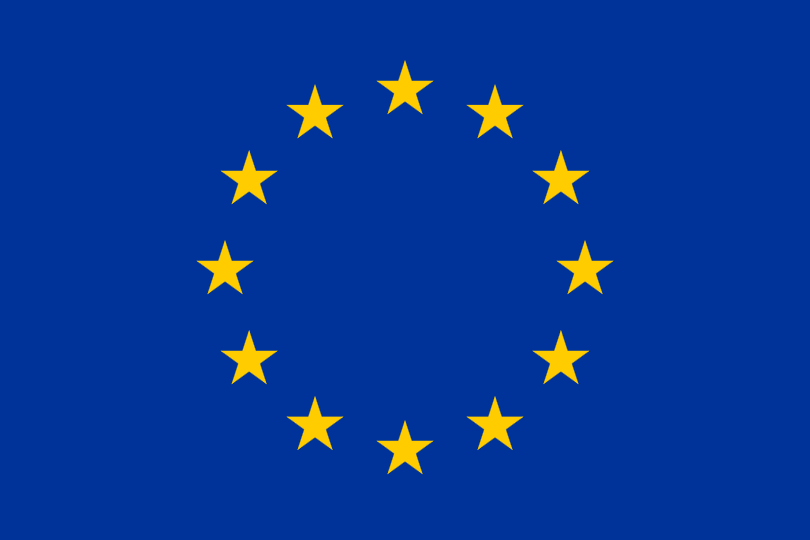} 
        \end{minipage}
        \hfill
        \begin{minipage}{0.8\textwidth}
            The first author has received funding from the European Research Council (ERC) under the European Union's Horizon 2020 research and innovation program (grant agreement No 740623). 
            
            The second author has received funding from the European Union's Horizon 2020 research and innovation program under the Marie Sklodowska-Curie grant agreement No 945322.
        \end{minipage}
	%\newpage 
 	\bibliographystyle{acm}
	\bibliography{SIT}

\begin{thebibliography}{10}

\bibitem{ALM1}
{\sc Almeida, L., Estrada, J., and Vauchelet, N.}
\newblock The sterile insect technique used as a barrier control against
  reinfestation, Apr. 2021.

\bibitem{ALM2}
{\sc Almeida, L., Estrada, J., and Vauchelet, N.}
\newblock Wave blocking in a bistable system by local introduction of a
  population: application to sterile insect techniques on mosquito populations.
\newblock {\em Math. Model. Nat. Phenom. 17\/} (2022), 22.
\newblock Publisher: EDP Sciences.

\bibitem{AlmLecNadPriv}
{\sc Almeida, L., L\'{e}culier, A., Nadin, G., and Privat, Y.}
\newblock Optimal control of bistable travelling waves: looking for the best
  spatial distribution of a killing action to block a pest invasion.
\newblock {\em arXiv preprint\/} (2022).

\bibitem{ALM3}
{\sc Almeida, L., Leculier, A., and Vauchelet, N.}
\newblock Analysis of the ”{Rolling} carpet” strategy to eradicate an
  invasive species, June 2021.

\bibitem{ANG}
{\sc Anguelov, R., Dumont, Y., and Djeumen, I. V.~Y.}
\newblock On the use of {Traveling} {Waves} for {Pest}/{Vector} elimination
  using the {Sterile} {Insect} {Technique}, Oct. 2020.
\newblock arXiv:2010.00861 [math].

\bibitem{ARO}
{\sc Aronson, D.~G., and Weinberger, H.~F.}
\newblock Nonlinear diffusion in population genetics, combustion, and nerve
  pulse propagation.
\newblock In {\em Partial {Differential} {Equations} and {Related} {Topics}\/}
  (Berlin, Heidelberg, 1975), J.~A. Goldstein, Ed., Lecture {Notes} in
  {Mathematics}, Springer, pp.~5--49.

\bibitem{ARO78}
{\sc Aronson, D.~G., and Weinberger, H.~F.}
\newblock Multidimensional nonlinear diffusion arising in population genetics.
\newblock {\em Advances in Mathematics 30}, 1 (Oct. 1978), 33--76.

\bibitem{BLI}
{\sc Bliman, P.-A., Cardona-Salgado, D., Dumont, Y., and Vasilieva, O.}
\newblock Implementation of {Control} {Strategies} for {Sterile} {Insect}
  {Techniques}.
\newblock {\em Mathematical Biosciences 314\/} (Aug. 2019), 43--60.
\newblock Publisher: Elsevier.

\bibitem{Bressan}
{\sc Bressan, A., Chiri, M.~T., and Salehi, N.}
\newblock On the optimal control of propagation fronts.
\newblock {\em Mathematical Models and Methods in Applied Sciences 32}, 06
  (2022), 1109--1140.

\bibitem{CAP}
{\sc Caputo, B., Moretti, R., Manica, M., Serini, P., Lampazzi, E., Bonanni,
  M., Fabbri, G., Pichler, V., della Torre, A., and Calvitti, M.}
\newblock A bacterium against the tiger: preliminary evidence of fertility
  reduction after release of aedes albopictus males with manipulated wolbachia
  infection in an italian urban area.
\newblock {\em Pest Management Science 76\/} (10 2019).

\bibitem{DUF}
{\sc Dufourd, C., and Dumont, Y.}
\newblock Impact of environmental factors on mosquito dispersal in the prospect
  of sterile insect technique control.
\newblock {\em Computers \& Mathematics with Applications 66}, 9 (Nov. 2013),
  1695--1715.

\bibitem{DYC}
{\sc Dyck, V.~A., Hendrichs, J., and Robinson, A.~S.}, Eds.
\newblock {\em Sterile {Insect} {Technique}: {Principles} and {Practice} in
  {Area}-{Wide} {Integrated} {Pest} {Management}}, 2~ed.
\newblock CRC Press, Boca Raton, Jan. 2021.

\bibitem{GAT}
{\sc Gato~Armas, R., Menéndez, Z., Prieto, E., Argilés, R., Rodríguez, M.,
  Baldoquín~Rodríguez, W., Hernández~Barrios, Y., Pérez~Chacón, D., Anaya,
  J., Fuentes, I., Lorenzo, C., González, K., Campo, Y., and Bouyer, J.}
\newblock Sterile insect technique: Successful suppression of an aedes aegypti
  field population in cuba.
\newblock {\em Insects 12\/} (05 2021), 469.

\bibitem{Hamel}
{\sc Hamel, F., and Roques, L.}
\newblock {Fast propagation for {KPP} equations with slowly decaying initial
  conditions}.
\newblock {\em {Journal of Differential Equations}\/} (2010), 1726.

\bibitem{KPP}
{\sc Kolmogorov, A., Petrovskii, I., and Piscunov, N.}
\newblock A study of the equation of diffusion with increase in the quantity of
  matter, and its application to a biological problem.
\newblock {\em Byul. Moskovskogo Gos. Univ. 1}, 6 (1938), 1--25.

\bibitem{LEW}
{\sc Lewis, M.~A., and Van Den~Driessche, P.}
\newblock Waves of extinction from sterile insect release.
\newblock {\em Mathematical Biosciences 116}, 2 (Aug. 1993), 221--247.

\bibitem{MAN}
{\sc Manoranjan, V.~S., and Van Den~Driessche, P.}
\newblock On a diffusion model for sterile insect release.
\newblock {\em Mathematical Biosciences 79}, 2 (June 1986), 199--208.

\bibitem{SEI}
{\sc Seirin~Lee, S., Baker, R.~E., Gaffney, E.~A., and White, S.~M.}
\newblock Modelling {Aedes} aegypti mosquito control via transgenic and sterile
  insect techniques: {Endemics} and emerging outbreaks.
\newblock {\em Journal of Theoretical Biology 331\/} (Aug. 2013), 78--90.

\bibitem{SMO}
{\sc Smoller, J.}
\newblock {\em Shock {Waves} and {Reaction}—{Diffusion} {Equations}}.
\newblock Springer Science \& Business Media, Dec. 2012.

\bibitem{STR19}
{\sc Strugarek, M., Bossin, H., and Dumont, Y.}
\newblock On the use of the sterile insect release technique to reduce or
  eliminate mosquito populations.
\newblock {\em Applied Mathematical Modelling 68\/} (2019), 443--470.

\bibitem{Opt-Trelat-Zhu-Zua2}
{\sc Tr{\'e}lat, E., Zhu, J., and Zuazua, E.}
\newblock {Optimal Population Control Through Sterile Males}.
\newblock working paper or preprint, Oct. 2017.

\bibitem{Opt-Trelat-Zhu-Zua1}
{\sc Tr{\'e}lat, E., Zhu, J., and Zuazua, E.}
\newblock Allee optimal control of a system in ecology.
\newblock {\em Mathematical Models and Methods in Applied Sciences 28}, 09
  (2018), 1665--1697.

\bibitem{VOL}
{\sc Volpert, A., Volpert, V., and Volpert, V.}
\newblock {\em Traveling {Wave} {Solutions} of {Parabolic} {Systems}}, vol.~140
  of {\em Translations of {Mathematical} {Monographs}}.
\newblock American Mathematical Society, Oct. 1994.
\newblock ISSN: 0065-9282, 2472-5137.

\bibitem{WEI}
{\sc Weinberger, H.~F., Lewis, M.~A., and Li, B.}
\newblock Analysis of linear determinacy for spread in cooperative models.
\newblock {\em J Math Biol 45}, 3 (Sept. 2002), 183--218.

\bibitem{ZHE}
{\sc Zheng, X., Zhang, D., Li, Y., Yang, C., Wu, Y., Liang, X., Liang, Y., Pan,
  X., Hu, L., Sun, Q., Wang, X., Wei, Y., Zhu, J., Qian, W., Yan, Z., Parker,
  A., Gilles, J., Bourtzis, K., Bouyer, J., and Xi, Z.}
\newblock Incompatible and sterile insect techniques combined eliminate
  mosquitoes.
\newblock {\em Nature 572\/} (08 2019), 1.

\bibitem{ZHU}
{\sc Zhu, Z., Zheng, B., Shi, Y., Yan, R., and Yu, J.}
\newblock Stability and periodicity in a mosquito population suppression model
  composed of two sub-models.
\newblock {\em Nonlinear Dynamics 107\/} (01 2022), 1--13.

\end{thebibliography}
\end{document}